\declaretheorem[name=Theorem, numberwithin=section]{theorem}
\declaretheorem[name=Lemma, sibling=theorem]{lemma}
\declaretheorem[name=Proposition, sibling=theorem]{proposition}
\declaretheorem[name=Corollary, sibling=theorem]{corollary}
\declaretheorem[name=Conjecture, sibling=theorem]{conjecture}
\declaretheorem[name=Claim, sibling=theorem]{claim}
\declaretheorem[name=Remark, style=remark, sibling=theorem]{remark}
\def\cqedsymbol{\ifmmode$\lrcorner$\else{\unskip\nobreak\hfil
\penalty50\hskip1em\null\nobreak\hfil$\lrcorner$
\parfillskip=0pt\finalhyphendemerits=0\endgraf}\fi}
\crefname{subsection}{Subsection}{Subsections}
\crefname{claim}{Claim}{Claims}
\crefname{problem}{Problem}{Problems}
\crefname{figure}{Figure}{Figures}
\def\Aut{\operatorname{Aut}}
\def\Shear{\operatorname{Shear}}
\newcommand{\RG}{$\{R,G\}$\xspace}  % horizontal shear
\newcommand{\GB}{$\{G,B\}$\xspace}  % vertical shear
\let\le\leqslant
\let\ge\geqslant
\def\bR{\mathbb{R}}
\def\bZ{\mathbb{Z}}
\def\cH{\mathcal{H}}
\def\cQ{\mathcal{Q}}
\def\cT{\mathcal{T}}
\def\ST{\operatorname{ST}}
\def\SL{\operatorname{SL}}
\def\Sp{\operatorname{Sp}}
\title{Reconfiguration of square-tiled surfaces}
\author[1]{Vincent Delecroix}
\author[1,2]{Clément Legrand-Duchesne}
\affil[1]{CNRS, LaBRI, Université de Bordeaux, Bordeaux, France.}
\affil[2]{Theoretical Computer Science Department, Faculty of Mathematics and Computer Science, Jagiellonian University, Kraków, Poland.}
\date{\today}
\begin{document}

\maketitle

\begin{abstract}
  We consider a combinatorial reconfiguration problem on a subclass of
  quadrangulations of surfaces called square-tiled surfaces. Our elementary move
  is a shear in a cylinder that corresponds to a well-chosen sequence of
  diagonal flips that preserves the square-tiled properties. We conjecture that
  the connected components of this reconfiguration problem are in bijection with
  the connected components of the moduli space of quadratic differentials. We
  prove that the conjecture holds in the so-called hyperelliptic components of
  Abelian square-tiled surfaces. More precisely, we show that any two such
  square-tiled surfaces of genus $g$ can be connected by $O(g)$ powers of
  cylinder shears.
\end{abstract}

\section{Introduction}\label{sec:intro}

In this article, we consider the combinatorial reconfiguration problem given by
the action of cylinder shears on the set of square-tiled surfaces. We review in
\cref{ssec:combinatorialReconfiguration} combinatorial reconfiguration
that aims to study reachability or connectivity of state spaces from either a
geometric or computational perspective. We next turn in
\cref{ssec:flipGraph} to a classical combinatorial reconfiguration of
geometric origin: diagonal flips in graphs embedded in surfaces.  The latter is
relevant as a square-tiled surface is a special case of quadrangulation and a
cylinder shear can be decomposed as a sequence of diagonal flips. In
\cref{ssec:introSquareTiledSurfaces}, we define cylinder shears on
square-tiled surfaces and state a conjectural classification of connected
components for cylinder shears on square-tiled surfaces
(\cref{conj:squareTiledSurfacesConnectedComponents}). We then state
our main result which settles some special cases of this conjecture
(\cref{thm:connectednessHyperelliptic}).

\subsection{Combinatorial reconfiguration}
\label{ssec:combinatorialReconfiguration}
Combinatorial reconfiguration aims to study the action of the monoid generated
by a set of (possibly partial) functions $\{f_i: X \to X\}$, called the
\emph{(combinatorial) moves} on some finite set $X$ called the \emph{state
  space}. This situation can conveniently be encoded in the
\emph{reconfiguration graph} (which is an oriented multi-graph) whose set of
vertices is $X$ and whose set of edges is $\{(x, f_i(x)): x \in X, i \in
I\}$. Most often, $X$ is a "complicated" set and the moves are "elementary" in
the sense that $x$ does not "differ" much from $f_i(x)$.

Famous reconfiguration problems include games such as the Rubik's Cube or
Sokoban for example, or problems on graphs such as reconfiguration of
independent sets or colourings (see the comprehensive surveys
\cite{van2013complexity,Nishimura2018,mynhardt2019reconfiguration}).

One crucial question in combinatorial reconfiguration is the connectedness of
the reconfiguration graph (equivalently the transitivity of the action). More
precisely, one would like to find simple invariants or efficient procedures to
decide if two elements of the state space are equivalent, that is, belong to the
same connected component. A natural refinement of this question consists in
studying the geometry of the connected components, e.g. their diameter.

When the combinatorial moves are endowed with probabilities, this turns the
action into a Markov chain (see \cite{levin2017markov} for an introduction to
Markov chains and their analysis). Under favourable conditions, the chain is
irreducible if the action is transitive and its stationary measure is the
uniform measure on $X$. A necessary condition for this to happen is the
connectedness of the reconfiguration graph. In such case, a simulation of the
Markov chain gives access to a random generator of elements in $X$. This goes
under the name of \emph{Monte-Carlo} generator. The quality of this generator
which is generally not uniform after a finite execution of the Markov chain is
quantified by the so-called mixing time. This random generation turns out to be
one important motivation for combinatorial reconfiguration both from a
theoretical and an experimental perspective.

\subsection{Edge flip in polygons and embedded graphs}
\label{ssec:flipGraph}
Consider a convex $n$-gon and its triangulations. The \emph{flip} operation
consists in flipping an edge: remove an edge of the triangulation and replace it
with the other diagonal in the created quadrilateral. In that context, the
reconfiguration graph $\cT_n$ is usually called a \emph{flip graph}.  Sleator,
Tarjan and Thurston~\cite{SleatorTarjanThurston-1988}, completed by the work of
Pournin~\cite{Pournin2014}, proved that the diameter of the flip graph $\cT_n$
is equal to $2n - 10$ for $n \ge 13$. This is one of the very few examples where
the value of the diameter of the reconfiguration graph is exactly known. Molloy,
Reed and Steige \cite{molloy1997Mixing} proved that the associated Markov chain,
called the \emph{flip dynamics}, mixes rapidly. Since then, the mixing time of
the flip dynamics has been improved several times, although the exact rate is
not known and has been conjectured by Aldous~\cite{Aldous1994} to be
$O(n^{3/2})$, up to logarithmic factors. The current best upper bound of
$O(n^3\log^3(n))$ is due to Eppstein and Frishbreg~\cite{eppstein2023Improved}.

Let us now move from polygons to graphs embedded in surfaces. Let $e$ be an edge
adjacent to two distinct faces of degree $d_1$ and $d_2$ in an embedded graph. A
\emph{flip} of $e$ consists in first removing $e$ from the graph (see
\cref{fig:flip}). This merges the two adjacent faces into one of degree
$d_1 + d_2 - 2$. Then, we reintroduce an edge $e'$ inside the newly created
face. Note that among the flips associated to $e$ there are some that preserve
the faces' degrees.

\begin{figure}[!h]
\centering
\includegraphics{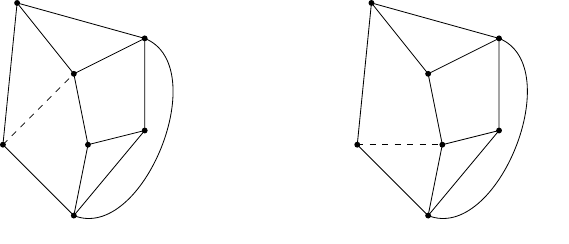}
\caption{Two combinatorial maps that differ by one flip. The flipped edge is
 dashed. This flip preserves the degree of the faces.}
\label{fig:flip}
\end{figure}

Let $\cT_{g,n}$ and $\cQ_{g,n}$ be respectively the set of isomorphism classes
of triangulations and of quadrangulations of genus $g$ with $n$ vertices. One
can turn $\cT_{g,n}$ and $\cQ_{g,n}$ into reconfiguration graphs by putting an
edge between $m$ and $m'$ if $m$ and $m'$ are related by a flip.  The
connectedness of $\cT_{g,n}$ has been proved by various geometric and
combinatorial methods, see~\cite{Mosher1988} and the references
therein. Estimates on the diameter have only been settled recently in the most
general situation.
\begin{theorem}[Disarlo-Parlier~\cite{DisarloParlier2019}]
\label{thm:DisarloParlierTriangulationsAreConnected}
The diameter of $\cT_{g,n}$ is $\Theta(g \log (g+ 1) + n)$.
\end{theorem}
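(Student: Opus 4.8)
The plan is to prove matching upper and lower bounds, reducing in both cases to estimating the distance from an arbitrary triangulation to a fixed normal form $T_0^{g,n}$. First recall from Euler's formula that a triangulation in $\cT_{g,n}$ has $3n+6g-6$ edges, so the flip graph has maximum degree $\Theta(g+n)$; this single quantity controls both the branching available in the lower bound and the budget of local moves in the upper bound. Since $\diam(\cT_{g,n}) \le 2\max_T d(T,T_0^{g,n})$, it suffices to flip any $T$ to $T_0^{g,n}$ economically, and separately to exhibit one $T$ that is provably far from $T_0^{g,n}$.

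For the upper bound I would split the cost into a vertex part and a genus part. First I would cluster the $n$ vertices: by repeatedly flipping to create degree-$3$ vertices and sliding them into a small reference disk, one normalises the vertex configuration using $O(1)$ flips per vertex on average, hence $O(n)$ flips in total (this is essentially the planar mechanism behind the Sleator--Tarjan--Thurston bound applied inside a disk). The genus part is where the logarithmic factor should appear: I would locate a separating simple closed curve that cuts the genus roughly in half and is carried by the triangulation as a cycle of $O(g)$ edges, straighten a collar of this curve with $O(g)$ flips so that the surface is presented as two sub-triangulations of genus $\approx g/2$ glued along a short boundary, and recurse on each piece. The resulting recursion $T(g)=2T(g/2)+O(g)$ solves to $T(g)=O(g\log g)$, giving the claimed $O(g\log(g+1)+n)$.

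For the lower bound the $\Omega(n)$ term is the easy piece: one finds a flip-Lipschitz invariant (for instance a quantity built from the vertex-degree sequence, which each flip perturbs by $O(1)$) that differs by $\Omega(n)$ between two explicit triangulations. The genus term is harder. A naive counting bound gives only $\Omega(g)$: since $\log|\cT_{g,n}|=\Omega(g\log g)$ (the number of gluings of the $\Theta(g)$ triangles is of factorial type) while the maximum degree is $\Theta(g+n)$, a ball of radius $r$ contains at most $\Theta(g+n)^{r}$ triangulations, which for bounded $n$ forces $r=\Omega\!\big(\tfrac{g\log g}{\log g}\big)=\Omega(g)$. To upgrade this to $\Omega(g\log g)$ I would instead use a geometric complexity that grows by at most a bounded multiplicative factor under a single flip, so that reaching complexity $C$ from a bounded-complexity triangulation costs $\Omega(\log C)$ flips; concretely, the geometric intersection number of a fixed multicurve with the edge set behaves this way, since a flip acts as an $\SL_2$-type move on the relevant normal coordinates and can at most double them. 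Arranging $\Theta(g)$ such obstructions, spread over the handles so that no single flip discharges more than one of them, would produce the extra logarithmic factor.

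The main obstacle is precisely this last point: forcing the lower bound past the counting value $\Omega(g)$ up to $\Omega(g\log g)$. The difficulty is that the total intersection budget is only $O(g)$ (there are only $O(g)$ edges), so it can a priori be spread over many curves whose complexities are reduced \emph{in parallel}, which merely reproduces $\Omega(g)$; one must design a hard instance in which discharging one handle necessarily inflates a neighbouring handle, so that the $\log g$ per handle genuinely accumulates --- equivalently, importing a Teichmüller- or mapping-class-group-metric input that already sees the $\Theta(\log g)$ diameter of the relevant part of moduli space. On the upper-bound side the analogous delicate step is guaranteeing a genuinely balanced separating curve that stays short in the current triangulation at every level of the recursion, so that the accumulated overhead remains logarithmic rather than linear in the number of levels.
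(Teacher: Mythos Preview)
This theorem is \emph{not} proved in the paper: it is quoted as a background result attributed to Disarlo--Parlier, and the only proof-related remark the paper makes is that ``the lower bound on the diameter follows from the techniques in~[STT92]''. So there is no in-paper argument to compare your proposal against; you have been sketching a proof of a cited external theorem rather than of something the present paper establishes.

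That said, a couple of comments on your sketch relative to what is known. Your upper-bound outline (normalise vertices in $O(n)$, then a balanced divide-and-conquer on the genus giving the recursion $T(g)=2T(g/2)+O(g)$) is indeed the shape of the Disarlo--Parlier argument, though making the separating-curve step rigorous---finding a short curve in the \emph{current} triangulation at every recursion level---is exactly the nontrivial content of their paper, and your proposal does not supply it.

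For the lower bound you correctly observe that naive counting only yields $\Omega(g)$, and you are candid that you do not have the missing $\log g$. Your proposed mechanism (intersection numbers that at most double under a flip, spread over $\Theta(g)$ handles) is not the route taken in the literature and, as you yourself note, runs into the parallel-discharge problem. The Sleator--Tarjan--Thurston approach the paper alludes to is quite different: it associates to a triangulation a hyperbolic polyhedron whose volume changes by at most a bounded amount per flip, and then produces instances of volume $\Theta(g\log g)$. That volume argument is what actually closes the gap; your intersection-number heuristic, in the form stated, does not.

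In short: nothing to compare against in this paper, your upper-bound plan is on the right track but incomplete at its key step, and your lower-bound plan has a genuine gap that the standard proof fills by a different (hyperbolic-volume) method rather than by the combinatorial intersection argument you propose.
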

Let us mention that the lower bound on the diameter follows from the techniques
in~\cite{SleatorTarjanThurston-1992}. The case of the sphere ($g=0$) has
received more attention and more precise asymptotics are known,
see~\cite{Pournin2014,Frati2017,ParlierPournin2017,CardinalHoffmanKustersTothWettstein2018,ParlierPournin2018a,ParlierPournin2018b}.

Finally, we consider some results concerning the mixing time in the case of the
sphere. We denote $\cT^{root}_{g,n}$ and $\cQ^{root}_{g,n}$ the triangulations
and quadrangulations of genus $g$ with $n$ vertices and one rooted edge, that is
distinguished and oriented\footnote{Note that our definitions of
  $\cT^{root}_{g,n}$ and $\cQ^{root}_{g,n}$ differ
  from~\cite{CaraceniStauffer2020,Budzinski2017}, in which $n$ denotes the
  number of faces. However, by Euler's formula, the number of faces is
  proportional to the number of vertices, up to an additive term which is linear
  in the genus.  Hence, their results extend to our definitions of
  $\cT^{root}_{g,n}$ and $\cQ^{root}_{g,n}$.}.
\begin{theorem}[\cite{Budzinski2017,CaraceniStauffer2020}]
  The flip dynamics on $\cT^{root}_{0,n}$ has mixing time $\Omega(n^{5/4})$.
  The flip dynamics on $\cQ^{root}_{0,n}$ has mixing time $\Omega(n^{5/4})$ and
  $O(n^{13/2})$.
\end{theorem}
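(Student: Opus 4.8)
The statement bundles together a lower bound (valid for both $\cT^{root}_{0,n}$ and $\cQ^{root}_{0,n}$) and an upper bound (for $\cQ^{root}_{0,n}$ only), and I would treat these as two essentially independent problems needing different machinery. The plan for the lower bound is to exhibit a real-valued observable on the state space that moves slowly under a single flip but whose fluctuation under the stationary (uniform) measure is large, and then invoke the standard Lipschitz-observable lower bound on mixing: if $\phi$ satisfies $|\phi(m)-\phi(m')|\le L$ whenever $m,m'$ differ by one flip, and if the uniform measure puts mass at least $1/4$ on each of two level sets $\{\phi\le a\}$ and $\{\phi\ge b\}$, then a chain started in the first set needs at least $(b-a)/(2L)$ steps to reach the second, so the mixing time is $\Omega((b-a)/L)$.

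The natural candidate is a metric functional of the map relative to the root, for instance the total distance $\phi(m)=\sum_{v}d_m(\rho,v)$ from the root vertex $\rho$ to all other vertices, or a transportation-type cost between the vertex-counting measure and a reference measure. The crucial geometric input is the scaling theory of random planar maps: a uniform element of $\cQ^{root}_{0,n}$ has diameter $\Theta(n^{1/4})$ (Brownian-map scaling), so $\phi$ concentrates around $n\cdot n^{1/4}=n^{5/4}$ with fluctuations at the same scale, which supplies $b-a=\Theta(n^{5/4})$. The hard part is precisely the Lipschitz control $L$: a single edge flip is geometrically local, but inserting or deleting one edge can create or destroy shortcuts and thereby change $d_m(\rho,v)$ for many vertices $v$ at once, so the crude bound on $|\phi(m)-\phi(m')|$ is far larger than $O(1)$. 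The real work is either to prove a ``no long-range rearrangement'' estimate showing that, with high probability under the stationary law and along the trajectory, a single flip moves $\phi$ by only $O(1)$, or to replace $\phi$ by a smoothed transportation cost whose per-flip variation is provably controlled, at the cost of re-deriving its stationary spread. Matching this per-flip budget against the $n^{5/4}$ stationary spread is what pins the exponent $5/4=1+\tfrac14$, and making that matching tight is the crux of the lower bound.

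For the upper bound $O(n^{13/2})$ on $\cQ^{root}_{0,n}$ I would use the comparison / canonical-paths method for the spectral gap. First fix a canonical quadrangulation $q_0$ and, for every $q$, build an explicit flip-path $\gamma_q$ from $q$ to $q_0$ -- for example by using a bijective encoding of quadrangulations (à la Cori--Vauquelin--Schaeffer) to reduce $q$ to a tree-like normal form one edge at a time, which bounds $|\gamma_q|$ polynomially in $n$. Concatenating $\gamma_q$ with the reverse of $\gamma_{q'}$ yields a path between any pair $q,q'$, and the Diaconis--Saloff-Coste estimate controls the inverse spectral gap by the maximal congestion $\max_f \frac{1}{Q(f)}\sum_{\gamma\ni f}\pi(q)\pi(q')\,|\gamma|$ over flips $f$. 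Converting the gap into a mixing-time bound through $t_{mix}=O(\mathrm{gap}^{-1}\cdot\log(1/\pi_{\min}))$ and using $\log(1/\pi_{\min})=O(n)$ (the number of rooted quadrangulations is exponential in $n$) produces the stated polynomial bound; the exponent $13/2$ is simply an artifact of multiplying the path length by the congestion estimate rather than anything intrinsic.

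The main obstacle for the upper bound is the congestion estimate itself: one must show that no single flip $f$ is used by too large a $\pi\otimes\pi$-weighted fraction of the canonical paths. As in every canonical-path argument, this reduces to designing the paths so that there is a near-injective ``decoding'' recovering the pair $(q,q')$ from $f$ together with a bounded amount of auxiliary data, and verifying that this decoding loses only a controlled multiplicative factor. I expect this bookkeeping, rather than the construction of the paths or the Brownian-map geometry, to be where the bulk of the technical effort lies, with the $n^{13/2}$ exponent reflecting the looseness inherent to the method.
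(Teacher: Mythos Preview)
The paper does not prove this theorem. It is quoted in the introductory \cref{ssec:flipGraph} purely as background, attributed to \cite{Budzinski2017} and \cite{CaraceniStauffer2020}, and no argument is given or sketched anywhere in the paper. So there is no ``paper's own proof'' to compare your proposal against.

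That said, two remarks on your sketch relative to what the cited references actually do. For the lower bound, your instinct to use a metric observable tied to the $n^{1/4}$ Brownian-map scaling is the right one, and Budzinski's argument is of this flavour. Your worry about the Lipschitz constant is also well-placed: the total-distance functional $\sum_v d(\rho,v)$ can jump by far more than $O(1)$ under a single flip, so that particular choice does not work as written; getting the exponent $5/4$ (rather than just $1/4$ from the radius alone) requires a more careful choice of statistic and a finer analysis than the bare ``spread divided by Lipschitz constant'' template you outline. For the upper bound, Caraceni and Stauffer do not proceed via canonical paths and congestion; they construct an explicit Markovian coupling by transporting the flip dynamics through a Schaeffer-type bijection to labelled plane trees, where the induced moves are local enough to control the coalescence time directly. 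Your canonical-paths plan is reasonable in principle, but the decoding/congestion step you correctly identify as the crux is exactly where such arguments tend to lose large polynomial factors, and it is not the route taken in the cited work.
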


\subsection{Cylinder shear in square-tiled surfaces}
\label{ssec:introSquareTiledSurfaces}
A square-tiled surface is a quadrangulation of a connected surface with a
2-colouring of its edges such that all faces have an alternating boundary. One
obtains a square-tiled surface geometrically by taking copies of a unit square
and gluing horizontal edges to horizontal edges and vertical edges to vertical
edges. We refer the reader to \cref{sssec:cubicEdgeTricoloredCubicGraphs} for
more detailed definitions. Square-tiled surfaces were introduced in the context
of Abelian and quadratic differentials on Riemann surfaces by
Zorich~\cite{Zorich2002}. In this article, we will actually consider a slight
generalization of square-tiled surfaces where we allow \emph{folded edges} which
turns out to be crucial in our connectedness proof (see
\cref{sssec:cubicEdgeTricoloredCubicGraphs} for a precise definition).

In general, a single diagonal flip operation does not preserve the square-tiled
property of the surface. Indeed, diagonal flips change the parity of the degree
of the vertices incident to the flipped edge (see \cref{fig:diagonalFlip}), but
the 2-colouring of the edges of the square-tiled surfaces imposes the vertices
to have even degrees.
\begin{figure}[!h]
\centering
\includegraphics{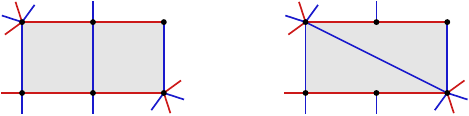}
\caption{Diagonal flips do not preserve square-tiled surfaces.}
\label{fig:diagonalFlip}
\end{figure}

A \emph{horizontal cylinder} in a square-tiled surface is a cycle of squares
adjacent along their vertical edges. A \emph{horizontal cylinder shear} consists
in doing all flips on the vertical edges in the cylinder (see
\cref{fig:cylinderShear}). Note that these flips commute. One defines similarly
\emph{vertical cylinders} and their shears. Cylinder shears hence preserve the
set of square-tiled surfaces and the number of squares. We will see in
\cref{sssec:shearDef}, that cylinder shears present unexpected similarities with
another reconfiguration operation: Kempe changes on graph colourings.
\begin{figure}[!h]
\centering
\includegraphics{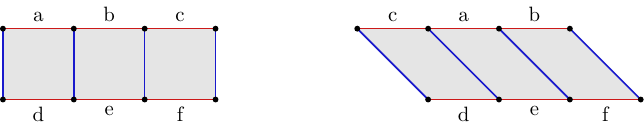}
\caption{A horizontal cylinder shear.}
\label{fig:cylinderShear}
\end{figure}

Beyond the number of squares, it turns out that the degree of the vertices is
also invariant under cylinder shears. The \emph{profile} of a square-tiled
surface is a pair $(\mu,k)$ made of an integer partition
$\mu = [1^{\mu_1},2^{\mu_2},\ldots]$ where $\mu_i$ is the number of vertices of
degree $2 \mu_i$ and $k$ is the number of folded edges. Euler's formula relates
the profile $(\mu, k)$ to the genus $g$ of the underlying surface as follows,
\begin{equation}
\label{eq:EulerFormula}
\sum_{i \ge 1} \mu_i (i - 2) = 4 g - 4 + k.
\end{equation}
We let $\ST(\mu, k)$ denote the set of square-tiled surfaces with given profile
$(\mu, k)$ and by abuse of notation $\ST(\mu)$ when $k=0$.

It turns out that $\ST(\mu, k)$ might still be non-connected under cylinder
shears. We introduce a first invariant of the connected components. Let $\tau$
be a square-tiled surface. We say that $\tau$ is \emph{Abelian} if one can build
this surface in such way that the top sides are only glued to the bottom sides,
and the right sides to the left sides. We say that $\tau$ is \emph{quadratic}
otherwise. Let $\ST_{Ab}(\mu, k)$ and $\ST_{quad}(\mu, k)$ be respectively the
set of Abelian and quadratic square-tiled surfaces in $\ST(\mu, k)$. In
\cref{fig:AbQuadSameStratum} we show an Abelian and a quadratic square-tiled
surface in the stratum $\ST([4^2], 0)$. It is easy to see that the profile
$(\mu,k)$ of an Abelian square-tiled surface is such that $k=0$ and the entries
in $\mu$ are even, that is $\mu_{2i+1}=0$ for all $i\ge 0$.

\begin{figure}[!ht]
  \centering
  \begin{subfigure}{.4\textwidth}
    % H(1,1)^hyp
    \centering
    \includegraphics{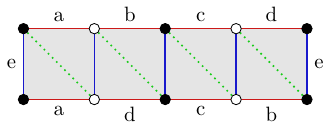}
    \caption{An Abelian square-tiled surface in $\ST_{Ab}([4^2])$ (stratum
      $\cH(1^2)$).}
    \label{sfig:AbSquareTiled}
  \end{subfigure}%
  \hspace{.1\textwidth}
  \begin{subfigure}{.4\textwidth}
    \centering
    \includegraphics{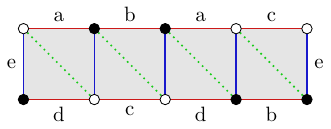}
    \caption{A quadratic square-tiled surface in $\ST_{quad}([4^2])$ (stratum $\cQ([2^2])$).}
    \label{sfig:quadSquareTiled}
  \end{subfigure}
  \caption{Two square-tiled surfaces in $\ST([4^2])$ (genus $g=2$) that are not
    connected under cylinder shears.}
  \label{fig:AbQuadSameStratum}
\end{figure}

The names Abelian and quadratic are borrowed from the theory of Riemann surfaces
and their Abelian and quadratic differentials. It turns out that our main
conjecture for the connectedness under cylinder shears
(\cref{conj:squareTiledSurfacesConnectedComponents} below) is intimately linked
to them. We introduce the necessary background now and refer the reader to
\cref{sec:background} for more details. To each square-tiled surface, one can
associate a Riemann surface endowed with a quadratic differential. It turns out
that $\ST_{Ab}(\mu)$ (respectively $\ST_{quad}(\mu,k)$) corresponds to the
quadratic differentials that are the squares of an Abelian differential
(resp. not the square of an Abelian differential). The moduli space of the
Abelian differentials and the quadratic differentials that are not the square of
an Abelian one are stratified according to the degree of their zeros. The
stratum of the Abelian differentials associated to the square-tiled surfaces in
$\ST_{Ab}(\mu)$ is denoted $\cH(1^{\kappa_1}, 2^{\kappa_6}, \ldots)$, where
$\kappa_i = \mu_{2i+2}$, while the one associated to $\ST_{quad}(\mu,k)$ is
denoted $\cQ(-1^{\kappa_{-1}}, 1^{\kappa_1}, 2^{\kappa_2}\ldots)$, where
$\kappa_{-1}= \mu_1+k$ and for $i\ge 1, \kappa_i = \mu_{i+2}$. Let us emphasise
that $\mu_2$ does not play any role in the associated stratum. The strata are
not necessarily connected but their connected components have been classified by
Kontsevich and Zorich~\cite{KontsevichZorich2003} and by
Lanneau~\cite{Lanneau2008}. As a consequence of this classification we have
that:
\begin{theorem}[\cite{KontsevichZorich2003,Lanneau2008}]
  \label{thm:strataConnectedComponentNumber}
  \begin{enumerate}
  \item[]
  \item Each Abelian stratum $\cH(\kappa)$ has at most three connected components.
  \item Each quadratic stratum $\cQ(\kappa)$ has at most two connected components.
  \item In genus $0$, that is
    $\displaystyle \sum_{i \ge -1} i \cdot \kappa_i = -4$, $\cQ(\kappa)$ is
    non-empty and connected.
  \end{enumerate}
\end{theorem}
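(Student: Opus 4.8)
The plan is to follow the strategy of the cited classifications of Kontsevich--Zorich and Lanneau: isolate a short list of locally constant invariants, bound the number of components by the number of values these invariants can take, and then prove the harder converse that two differentials sharing all invariants lie in the same component.

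\textbf{Step 1 (Invariants and the upper bounds).} For an Abelian stratum $\cH(\kappa)$ I would use two invariants. The first is \emph{hyperellipticity}: a component is hyperelliptic if its generic surface admits a holomorphic involution $\iota$ with $\iota^*\omega = -\omega$ and quotient of genus $0$, which forces the symmetric zero configurations and so can occur only for $\cH(2g-2)$ and $\cH(g-1,g-1)$. The second is the \emph{parity of the spin structure}: when every zero order $\kappa_i$ is even, $\omega$ determines a theta characteristic whose Arf invariant in $\bZ/2\bZ$ is, via Johnson's formula, invariant under deformation. Both quantities are visibly locally constant, hence constant on components, and together they cut out at most three classes — hyperelliptic, even non-hyperelliptic, odd non-hyperelliptic — since the hyperelliptic component itself has a determined spin parity and therefore does not multiply the count. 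For a quadratic stratum $\cQ(\kappa)$ the analogous invariants are hyperellipticity together with a spin/parity invariant computed on the canonical orientation double cover (available only under suitable congruence conditions on the $\kappa_i$), giving at most two classes.

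\textbf{Step 2 (Completeness of the invariants).} The substantial direction is to show that these invariants separate \emph{all} components. Here I would pass to the combinatorial model of interval exchange transformations through Veech's zippered rectangles, so that each stratum is swept out by the suspension of a Rauzy class of permutations; connectedness of the stratum then reduces to connectedness of that Rauzy class under the Rauzy--Veech moves, with \emph{generalized permutations} replacing ordinary ones in the quadratic case. One then argues by induction on the genus, using two explicit local surgeries — bubbling a handle and breaking up a zero — to reduce any surface to a finite list of normal forms inside its invariant class, and verifies that the normal forms within one class are mutually reachable.

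\textbf{Step 3 (Genus $0$ and the main obstacle).} Part (3) is the base case of this induction. On $\mathbb{P}^1$ a quadratic differential is a meromorphic section of $K^2$ with prescribed zeros and simple poles, so the stratum fibers over a connected configuration space of marked points modulo $\mathrm{PGL}_2$; since neither the hyperelliptic obstruction nor the spin invariant survives in genus $0$, this yields non-emptiness and connectedness directly. The main obstacle throughout is Step 2, and specifically the low-genus exceptional strata: there the generic hyperelliptic and spin invariants can coincide or degenerate and sporadic extra components appear, so the inductive surgery argument must be supplemented by a finite but delicate case analysis to certify that no further components exist beyond those already separated by the invariants.
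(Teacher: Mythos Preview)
The paper does not contain a proof of this theorem: it is quoted as a known result from \cite{KontsevichZorich2003,Lanneau2008} and used only as background to motivate \cref{conj:squareTiledSurfacesConnectedComponents}. There is therefore nothing in the paper to compare your proposal against.

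As a sketch of the cited classifications, your outline is broadly faithful for the Abelian case (hyperellipticity and spin parity as the separating invariants, Rauzy classes and the two surgeries for completeness), and your genus~$0$ argument for part~(3) is standard. For the quadratic case your Step~1 is slightly off: Lanneau's classification does not rely on a spin invariant of the orientation cover as a second invariant alongside hyperellipticity in the way you suggest; rather, the non-hyperelliptic part of each stratum is shown to be connected, except for a short explicit list of sporadic strata in genera~$3$ and~$4$ where an additional component appears for reasons specific to those cases. So ``hyperelliptic vs.\ non-hyperelliptic, plus finitely many exceptions'' is the more accurate summary. In any event, a proposal at this level of detail is at best a table of contents for those papers; the actual arguments (the extended Rauzy diagram analysis, the control of how the invariants behave under the surgeries, and the exhaustive treatment of the exceptional quadratic strata) are where all the work lies, and none of that is reproduced or needed in the present paper.
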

In this article we will focus on the genus $0$ case and some specific connected
components of $\cH(\kappa)$ in higher genera that are called \emph{hyperelliptic
  components} (see \cref{sec:background} for the definition). For now, let us
just mention that for $\mu=[2^{\mu_2}, 4g-2]$ or $\mu=[2^{\mu_2},(2g)^{2}]$
where $g \ge 2$ and $\mu_2 \ge 0$, there is a subset $\ST^{hyp}_{Ab}(\mu)$ of
$\ST_{Ab}(\mu)$, closed under cylinder shears, which corresponds to the
hyperelliptic connected component of $\cH(2g-2)$ or $\cH((g-1)^2)$ respectively.

\medskip We conjecture that the connected components of the moduli space
partition the square-tiled surfaces into equivalence classes for the shearing
operation:
\begin{restatable}[Generalizing Conjecture~4
  in~\cite{BuchinEtAl}]{conjecture}{conjquadrangulations}\label{conj:squareTiledSurfacesConnectedComponents}
  Let $\mu$ be an integer partition of $n$ and $k$ a non-negative integer such
  that $$\sum_{i\ge 1} \mu_i(i -2) = 4g-4+k.$$ Let $S$ and $S'$ be two
  square-tiled surfaces in $\ST(\mu,k)$. Then $S$ and $S'$ are equivalent via
  cylinder shears if and only if they belong to the same connected component of
  the moduli space of quadratic differentials.
\end{restatable}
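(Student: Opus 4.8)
The statement is an ``if and only if'', and I would attack the two implications by completely different means: the forward direction is an \emph{invariance} statement that I expect to be routine, while the backward direction is a \emph{transitivity} (connectedness) statement that carries essentially all of the difficulty.

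\textbf{Shear-equivalence implies same moduli component.} Here I would argue that a single cylinder shear cannot change the connected component. Geometrically, a power of a cylinder shear is realized by restricting a horizontal parabolic deformation $\left(\begin{smallmatrix} 1 & t \\ 0 & 1 \end{smallmatrix}\right)$ to the single cylinder and leaving the rest of the surface fixed, with $t$ running over an interval. As $t$ varies, the cone angles at the singularities are untouched, so the intermediate half-translation surfaces trace out a continuous path that never leaves the stratum $\cH(\kappa)$ or $\cQ(\kappa)$ fixed by the profile. The two endpoints $S$ and $S'$ therefore lie in one and the same connected component, and every invariant separating components—the Abelian/quadratic dichotomy, membership in the hyperelliptic locus, and the spin/parity invariant in the Abelian case—is preserved for free. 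Chaining this over a sequence of shears closes the direction.

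\textbf{Same component implies shear-equivalence.} This is the hard half: it asserts that the reconfiguration graph restricted to a single connected component is connected. Since the profile fixes the number of squares and cylinder shears preserve it, the whole discussion stays inside $\ST(\mu,k)$. My plan is to fix, for each component, an explicit family of \emph{model} surfaces and prove two things: (i) every surface in the component can be driven to such a model by cylinder shears, and (ii) all models attached to one component are mutually shear-equivalent. For step (i) the key structural observation is that the global horizontal and vertical shears are products of cylinder shears and generate an $\SL(2,\bZ)$-action, while individual cylinder shears realize the full horizontal and vertical multitwist groups; this extra freedom (strictly more than $\SL(2,\bZ)$, which has too many orbits) is what should let one simplify an arbitrary cylinder decomposition—merging, splitting and redistributing cylinders, and sliding the singularities into canonical position—by induction on the genus or on the number of cylinders, while keeping the combinatorial complexity under control.

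\textbf{The main obstacle.} The crux is step (i) in full generality. In the hyperelliptic components the hyperelliptic involution forces the cylinder decomposition into a rigid symmetric shape, which both makes the reduction to a model tractable and yields the linear $O(g)$ bound of \cref{thm:connectednessHyperelliptic}; by contrast, for a general component—and \emph{a fortiori} for quadratic differentials, where folded edges enter—the cylinder decomposition can be arbitrarily intricate and there is no symmetry to anchor a normal form. The genuine difficulty is to convert the \emph{continuous} connectedness proofs of Kontsevich--Zorich and Lanneau, whose surgeries (bubbling a handle, breaking up a zero) change strata and need not respect the integer grid, into a purely \emph{discrete} reachability argument using only cylinder shears, all while tracking the spin/parity invariant, which is notoriously hard to read off combinatorially. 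I expect this is exactly why the conjecture can presently be settled only in the hyperelliptic components and in genus $0$.
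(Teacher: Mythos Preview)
The statement is a \emph{conjecture}, not a theorem: the paper does not prove it, and indeed explicitly presents it as open (see also \cref{conj:connectednessShpere} in \cref{sec:discussion}). So there is no ``paper's own proof'' to compare against for the full biconditional.

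For the forward direction (shear-equivalent $\Rightarrow$ same component), your argument is essentially the paper's: \cref{prop:connectedComponentsAndConnectedComponents} is proved via \cref{lem:cylinderShearIsAPath-Abelian} and \cref{lem:cylinderShearIsAPath-quadratic}, which realise a cylinder shear as a continuous path $\{S_t\}_{t\in[0,1]}$ in the stratum by slanting the quadrilaterals of the chosen cylinder. That is exactly your parabolic-deformation picture.

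For the backward direction, your proposal is not a proof but a programme, and you say as much in your last paragraph. That is the honest status: the paper establishes only the special cases of \cref{thm:connectednessHyperelliptic} and \cref{thm:connectednessSphere}, and its method---reduce to path-like (one-cylinder) configurations via fusion paths, encode these by decorated plane trees, then connect the trees by glue-and-cut and decoration-exchange moves---is rather different in flavour from the ``normalise the cylinder decomposition using the multitwist group'' strategy you sketch. In particular, the paper's reduction in the hyperelliptic case goes through the \emph{quotient} by the hyperelliptic involution (down to genus~$0$) and uses half cylinder shears there, rather than exploiting a symmetric cylinder pattern on the cover as you suggest. Your diagnosis of the obstruction (discretising the Kontsevich--Zorich/Lanneau surgeries while tracking spin) is reasonable, but as written the backward direction remains a plan, not an argument.
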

According to \cref{thm:strataConnectedComponentNumber}, each $\ST(\mu,k)$ would
then be made of at most 5 connected components under cylinder shears.
\cref{conj:squareTiledSurfacesConnectedComponents} is a direct generalization of
Conjecture~4 in~\cite{BuchinEtAl}, which was supported by numerical evidence, to
square-tiled surfaces with folded edges.

It is relatively straightforward to prove that being in the same connected
component of the moduli space is a necessary condition:
\begin{proposition} \label{prop:connectedComponentsAndConnectedComponents}
Let $S$ be a square-tiled surface and $S'$ obtained from $S$ by a cylinder
shear. Then the quadratic differentials associated to $S$ and $S'$ belong
to the same connected component of the strata of the moduli space of quadratic
differentials.
\end{proposition}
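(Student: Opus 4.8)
The plan is to realise the step from $S$ to $S'$ as a continuous deformation inside a single stratum of the moduli space of quadratic differentials. Since the strata are orbifolds, their connected components coincide with their path components, so the two endpoints of such a deformation necessarily lie in the same component. The starting point is the dictionary recalled in \cref{sec:background}: a square-tiled surface carries a flat metric with conical singularities and holonomy in $\{\pm\mathrm{Id}\}$, hence determines a half-translation surface and a point in some stratum $\cQ(\kappa)$ (or, in the Abelian case, a square lying in the image of a stratum $\cH(\kappa')$). The orders of the zeros, and thus the stratum, are encoded by the cone angles and hence by the profile $(\mu,k)$, which is left unchanged by cylinder shears.

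First I would describe the geometric shear of a single cylinder. By the definition recalled in \cref{ssec:introSquareTiledSurfaces}, a horizontal cylinder is a cyclic row of squares, which in the flat picture is an embedded Euclidean cylinder $\mathbb{R}/c\mathbb{Z}\times[0,1]$ of integer circumference $c$, bounded by two closed horizontal geodesics and containing no singularity in its interior. For a real parameter $t$, let $S_t$ be the surface obtained from $S$ by keeping the flat structure fixed outside the cylinder and replacing the chart inside it by its image under $\left(\begin{smallmatrix} 1 & t \\ 0 & 1 \end{smallmatrix}\right)$, reglued along the two boundary geodesics. This operation is defined for every $t$, depends continuously on $t$, and neither creates, moves nor merges singularities; in local period coordinates it moves only the relative periods of paths crossing the cylinder, linearly in $t$. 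Hence $t\mapsto S_t$ is a path lying entirely in the stratum containing $S$, with $S_0=S$, and it stays in the Abelian locus whenever $S$ is Abelian since the shear preserves the translation structure.

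Next I would match the combinatorial move with this family. Performing all the (commuting) flips on the vertical edges of the cylinder, as in \cref{fig:cylinderShear}, shifts the top boundary of the cylinder by one square width relative to the bottom while fixing everything outside; this is exactly the geometric shear $S_t$ evaluated at the integer time realising that shift, and the result is again square-tiled and equals $S'$. Verifying this identification amounts to comparing the gluing data of the squares before and after the flip sequence, and this bookkeeping is where I expect the bulk of the work to be; I note, however, that the conclusion is insensitive to the precise integer value of $t$, only its being an integer matters.

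Combining the two steps, $S=S_0$ and $S'=S_t$ are joined by the restriction of the continuous path $(S_s)_{0\le s\le t}$, which lies entirely in one stratum. Their associated quadratic differentials therefore lie in the same connected component; in particular all finer connected-component invariants (Abelian versus quadratic type, hyperellipticity, spin parity) are automatically preserved, since they are constant on components. The vertical case follows verbatim after exchanging the two colours. The main obstacle is thus the verification in the previous paragraph, namely that the prescribed sequence of diagonal flips realises precisely the integer-time geometric shear of the cylinder and introduces no other change to the flat structure.
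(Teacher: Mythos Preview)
Your proposal is correct and follows essentially the same route as the paper: the paper proves two lemmas (\cref{lem:cylinderShearIsAPath-Abelian} and \cref{lem:cylinderShearIsAPath-quadratic}) realising a cylinder shear as the time-$1$ endpoint of the continuous family obtained by shearing the squares of the cylinder into parallelograms, and then deduces the proposition immediately. One point you glossed over that the paper makes explicit: in the quadratic case a $\{i,j\}$-component can be a path rather than a cycle (because of folded edges), and the corresponding geometric object is still a flat cylinder, now of height $1/2$; the same shearing construction applies verbatim.
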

The underlying reason of \cref{prop:connectedComponentsAndConnectedComponents}
is that a cylinder shear can be realised in the moduli space as a continuous
motion. Our main contribution is the two following theorems
that provide a partial answer to
\cref{conj:squareTiledSurfacesConnectedComponents}.

\begin{theorem}\label{thm:connectednessHyperelliptic}
  Let $g \ge 1$ and $\mu=[2^{\mu_2}, 4g-2]$ or $\mu=[2^{\mu_2}, (2g)^2]$. Then
  any two square-tiled surfaces in $\ST_{Ab}^{hyp}(\mu)$ are connected by
  a sequence of at most $\Theta(g)$ powers of cylinder shears.
\end{theorem}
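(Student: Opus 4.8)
The plan is to choose one canonical representative inside each class $\ST_{Ab}^{hyp}(\mu)$ and to show that every surface in the class can be driven to it by $O(g)$ powers of cylinder shears; connecting two arbitrary surfaces then costs at most twice this bound. The organizing structure is the hyperelliptic involution: a surface $S\in\ST_{Ab}^{hyp}(\mu)$ carries an order-two symmetry $\sigma$ (the simultaneous rotation by $\pi$ of its squares), with $S\to S/\sigma$ a double cover branched at $2g+2$ points whose target $S/\sigma$ is a genus-$0$ square-tiled surface. The first thing I would establish, at the level of the edge-tricoloured cubic graph of \cref{sssec:cubicEdgeTricoloredCubicGraphs}, is that this symmetry is compatible with shearing: if $T$ is a cylinder shear then $T\sigma T^{-1}$ is again an involution of the sheared surface, and $T$ descends to a cylinder shear $\bar T$ of $S/\sigma$. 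Thus the reconfiguration of $\ST_{Ab}^{hyp}(\mu)$ maps onto the reconfiguration of a family of genus-$0$ quadratic square-tiled surfaces, one shear upstairs matching one shear downstairs, and the problem reduces to shear-connectivity in genus $0$ --- the regime where \cref{thm:strataConnectedComponentNumber}(3) guarantees a single moduli component, so that no component obstruction survives.

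For the reduction itself I would use the horizontal and vertical cylinder decompositions together. A single shear is a Dehn twist and does not change the cylinder decomposition in its own direction, but alternating horizontal and vertical shears realizes the Veech-group/$\SL$-type action and lets one run a continued-fraction-style reduction toward a normal form: a short ``staircase'' whose combinatorics is fixed by the profile and the hyperelliptic data. The involution $\sigma$ permutes the cylinders of a given direction, fixing only those through the singular Weierstrass point(s) and pairing the rest, and I would carry out every reduction step $\sigma$-symmetrically so as to stay in the hyperelliptic locus throughout. The two profile families are treated in parallel, the only difference being whether $\sigma$ fixes the single degree-$(8g-4)$ vertex or swaps the two degree-$4g$ vertices; the case $g=1$ is the elliptic-involution torus and serves as the base of the argument.

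The quantitative heart, and the step I expect to be the main obstacle, is bounding the number of powers of shears by $O(g)$ \emph{independently} of the number of squares and of the multiplicity $\mu_2$ of marked regular points. The difficulty is that a surface can carry arbitrarily many squares, arbitrarily long cylinders, and many degree-$4$ marked points, none of which may appear in the final count. I would resolve this by exploiting that a single power $\Shear^k$ is a macroscopic move: one such power slides the entire content of a cylinder and can reposition all of its marked points and reabsorb its width at once, so that the size-dependent discrepancies collapse into the choice of the exponents $k$ rather than into the number of moves. On the genus-$0$ quotient this amounts to showing that the normalizing sequence can be organized around the $2g+2$ branch points, whose number is $\Theta(g)$, rather than around the total area; making this decoupling precise is where the genuine work lies.

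Finally, to obtain $\Theta(g)$ rather than merely $O(g)$ I would produce a matching lower bound by exhibiting two surfaces of $\ST_{Ab}^{hyp}(\mu)$ and an integer quantity $\phi$ that changes by at most a constant under any single power of a cylinder shear while differing by $\Omega(g)$ on the two chosen surfaces. Viewing a shear as a rank-one update of the gluing data makes the Lipschitz estimate on $\phi$ routine; the content is to design $\phi$ --- plausibly a count read off from the interaction of the horizontal and vertical cylinder decompositions with the hyperelliptic singularities --- so that on a concrete pair of staircase surfaces it genuinely grows linearly in the genus.
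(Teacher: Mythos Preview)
Your reduction to the genus-$0$ quotient via the hyperelliptic involution is indeed the backbone of the paper's argument, and your lower-bound idea (a Lipschitz invariant separating two explicit surfaces) matches the paper's in spirit. But there is a concrete error in the descent step that breaks the upper bound.

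You assert that a cylinder shear $T$ on $S$ ``descends to a cylinder shear $\bar T$ of $S/\sigma$''. This is false in general, and the failure is not a technicality but the crux of the whole argument. A horizontal cylinder $C$ in $S$ is preserved by $\sigma$, but its image in $S/\sigma$ depends on the parity of its height: if the height is odd the quotient cylinder is bounded by a \GB-path and the shear on $C$ does descend to an ordinary cylinder shear; if the height is even the midline of $C$ maps to a critical curve made of red edges, and the shear on $C$ descends to a \emph{half} cylinder shear, an operation that \emph{changes the profile} of the quotient (it trades a pair of $1$-faces for a pair of half-edges, or vice versa). Consequently the quotients of surfaces in $\ST_{Ab}^{hyp}(\mu)$ are spread across an entire family of spherical strata
\[
\bigcup_{\substack{k'+\mu'_1=2g+1\ (\text{or }2g+2)\\ \mu'_1+2\mu'_2=\mu_2}} \ST_{quad}([1^{\mu'_1},2^{\mu'_2},d],k'),
\]
and ordinary cylinder shears on the quotient, which preserve the profile by \cref{lem:profileIsInvariant}, cannot move between these pieces. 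Your ``one shear upstairs matching one shear downstairs'' picture therefore cannot connect two hyperelliptic surfaces whose quotients have different $\mu'_1$; you need the half-shear mechanism precisely to migrate between the strata on the right-hand side. The paper introduces half cylinder shears for exactly this reason, uses them to drive the quotient into a stratum with $\mu'_1\le 1$, and only then invokes the spherical connectedness result (which itself requires $\mu'_1\le 1$). Your continued-fraction/staircase heuristic does not address this obstruction at all.
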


\begin{theorem}\label{thm:connectednessSphere}
  Let $\mu$ be an integer partition and $k$ a positive integer such that
  $\sum \mu_i(i - 2) = k-4$. Then $\ST(\mu,k)$ is non-empty. Moreover, if
  $\mu_1 \le 1$, then $ST(\mu,k)$ is connected and has
  diameter $\Theta(k)$ with respect to powers of cylinder shears.
\end{theorem}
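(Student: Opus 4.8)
The hypothesis $\sum \mu_i(i-2) = k-4$ together with Euler's formula \eqref{eq:EulerFormula} forces $g=0$, so throughout we work on the sphere, and since $k\ge 1$ the associated quadratic differential is never a global square of an Abelian one. By part~(3) of \cref{thm:strataConnectedComponentNumber} the corresponding genus-$0$ quadratic stratum is non-empty and connected, which is exactly what \cref{conj:squareTiledSurfacesConnectedComponents} predicts here; the content of the theorem is to \emph{realise} this connectivity by cylinder shears and to pin down the diameter. Non-emptiness I would establish by an explicit construction: start from a minimal pillowcase-type genus-$0$ surface carrying the prescribed poles and higher-order zeros (whose number is $O(k)$, since $\mu_1\le 1$ and $\sum_{i\ge 3}\mu_i\le k$), and then insert the $\mu_2$ regular points (degree-$4$ vertices) and the folded edges one at a time by local surgeries respecting the square-tiled structure.

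For connectedness and the upper bound I would exhibit a single canonical representative $S_0\in\ST(\mu,k)$ and show that every $S\in\ST(\mu,k)$ can be steered to $S_0$ with $O(k)$ powers of cylinder shears, via a two-phase reduction alternating the two directions. First, using vertical power-shears, reduce the horizontal cylinder decomposition to its minimum (one horizontal cylinder in the generic case): a vertical cylinder crossing two adjacent horizontal cylinders, when twisted by an appropriate power, reconnects the horizontal saddle connections and merges the two horizontal cylinders into one. Since genus $0$ forces the cylinder-adjacency structure to be tree-like, a merging vertical cylinder always exists, and the number of horizontal cylinders is $O(k)$ because horizontal saddle connections join only the essential (non-regular) singularities, whose count is $O(k)$ independently of $\mu_2$. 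Crucially, each such high-power shear can slide past arbitrarily many degree-$4$ vertices at once, which is why the bound depends only on $k$. Second, once a single horizontal cylinder remains, the surface is encoded by the gluing of its top boundary to its bottom boundary together with the cyclic placement of the singularities and folded edges; I would normalise this boundary data, ordering the singularities canonically and setting all twists to zero, with a further $O(k)$ horizontal and vertical power-shears. The hypothesis $\mu_1\le 1$ enters precisely here: a single degree-$2$ vertex (or none) can be absorbed into the canonical boundary word, whereas two or more would obstruct the merging, since, unlike folded edges, they cannot be moved freely.

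The diameter upper bound $O(k)$ is then immediate, applying the reduction to both endpoints. For the matching lower bound $\Omega(k)$ I would construct an invariant that is Lipschitz under a single power of a cylinder shear, changing by at most a bounded amount per move, and that differs by $\Omega(k)$ between two explicit surfaces of $\ST(\mu,k)$; a natural candidate is a measure of the dispersion of the poles (folded edges) in the cylinder-adjacency tree, since one power-shear, being the analogue of a Dehn twist, can only alter such a tree distance by $O(1)$.

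I expect the main obstacle to be the merging step of the first phase: one must guarantee that at every intermediate configuration a vertical cylinder realising the desired merge genuinely exists, and that folded edges are correctly tracked through the fold-capped half-cylinders that arise in the quadratic setting, all while keeping the total number of shears linear in $k$. The construction of the Lipschitz invariant for the lower bound is a second, independent difficulty, as it must be insensitive to the arbitrarily large number of regular degree-$4$ vertices while still detecting the $\Omega(k)$ essential singularities.
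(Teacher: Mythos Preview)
Your two-phase outline---merge to a single horizontal cylinder, then normalise---matches the paper's architecture (\cref{lem:movesToPathLike} followed by \cref{lem:movesBetweenPathLike}), and your merging step is close in spirit to the paper's \emph{fusion paths} (\cref{lem:fusionPath,lem:fusionPathExistence}). But two points deserve correction.

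First, you locate the hypothesis $\mu_1\le 1$ in the wrong place. In the paper it is already essential in phase one: a fusion path must terminate in a half-edge, so every leaf $v$ of the weighted stable tree $\Gamma(\tau)$ must satisfy $k^{(v)}\ge 1$, and this is exactly what $\mu_1\le 1$ guarantees (see the proof of \cref{lem:movesToPathLike}). With $\mu_1\ge 2$ a leaf can have decoration $([1^2,2^m],0)$, and then no merging vertical cylinder of the required type exists; your claim that ``a merging vertical cylinder always exists'' because the adjacency structure is a tree is not enough.

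Second, and more seriously, your phase two is where the real work lies and your sketch does not engage with it. Once the surface is path-like, a single vertical shear typically destroys the path-like property, so one cannot simply ``order the singularities and set twists to zero''. The paper handles this via an encoding of path-like surfaces by \emph{decorated plane trees} (\cref{thm:dualTree}) and two composite moves---\emph{glue and cut} and \emph{decoration exchange}---each of which is a carefully designed short sequence of shears that temporarily breaks and then restores the single-cylinder structure (\cref{lem:glueCutCompo,lem:transferCompo}). The reduction to a canonical caterpillar then takes the whole of \cref{ssec:nice,ssec:triangleFreeCaterpillar}. Without an analogue of these controlled moves, your second phase is a gap rather than a sketch.

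Finally, your lower-bound invariant is more complicated than needed and not obviously Lipschitz under \emph{powers} of shears. The paper's invariant is simply the number of red half-edges in $S^*(\tau)$: any power of a horizontal shear leaves $\tau_R$ untouched, and any power of a vertical shear alters the red fixed-point count by at most~$2$ (it is $0$ on a cycle component and oscillates between two values on a path component). Comparing a horizontal path-like surface ($k-2$ red half-edges) with a vertical one (at most~$2$) gives $\Omega(k)$ immediately (\cref{prop:squareTiledLowerBound}).
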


\cref{thm:connectednessSphere} generalises a result of Cassaigne, Ferenczi and
Zamboni~\cite{CassaigneFerencziZamboni2011} that proves the case of
$\ST([k-2],k)$. Some natural extensions of our results would be to prove
\cref{conj:squareTiledSurfacesConnectedComponents} for the quadratic
hyperelliptic components or for generic profiles $(\mu,k)$ in genus 0, that is
without restrictions on $\mu_1$.

As we already mentioned, the connectedness of $\ST([k-2],k)$ under cylinder
shears was considered in~\cite{CassaigneFerencziZamboni2011}. One of their
motivation was the theory of dynamical systems and more precisely, the dynamics
of interval exchange transformations, see~\cite{FerencziZamboni2010}. Note that
beyond connectedness, \cite{CassaigneFerencziZamboni2011} provides an explicit
formula for the cardinality of $\ST([k-2],k)$.

\subsection{Organisation and sketch of proof}
We start by giving all definitions related to square-tiled surfaces and shears
in \cref{sec:background}. This section also contains background on Abelian and
quadratic differentials and the definition of the weighted stable graph of a
square-tiled surface, that encodes the structure of its cylinders. In
\cref{sec:connectingtoPathLike}, we focus on the square-tiled surfaces of genus
0 and show how they can be connected to some path-like square-tiled surface,
which contains only one horizontal cylinder. We derive from the existence of
these path-like square-tiled surfaces the lower bound of
\cref{thm:connectednessSphere}. Then in \cref{ssec:reconfigurationPathLike}, we
conclude the proof of~\cref{thm:connectednessSphere} by showing that all these
path-like configurations are related by cylinder shears. To do so, we show that
they are all equivalent to a canonical one, by progressively introducing
structure in the path-like configuration. Finally, we address
in~\cref{sec:hyperellipticSquareTiledSurfaces} the case of Abelian hyperelliptic
square-tiled surfaces by reducing to square-tiled surfaces of genus zero,
thereby proving~\cref{thm:connectednessHyperelliptic}.

To conclude, we discuss in \cref{sec:discussion} several works related to our
result as well as possible extensions.
\subsection*{Acknowledgments}
The second author was partially supported by the Polish National Science Centre
under grant no. 2019/34/E/ST6/00443.\\
Both authors benefited from the ANR project Modiff ANR-19-CE40-0003-01. \\
Both authors thank Luke Jeffreys for organizing the workshop
\textit{Square-tiled surfaces: a classification of cylinder block shearing
orbits} at the Heilbronn Institute in Bristol in summer 2023 during which we
had the opportunity to discuss this work
(see \url{https://heilbronn.ac.uk/2023/06/12/frg-square-tiled-surfaces/}).

\section{Square-tiled surfaces}
\label{sec:background}

\subsection{Square-tiled surfaces and their dual tricoloured cubic graphs}
\label{sssec:cubicEdgeTricoloredCubicGraphs}
Recall that a square-tiled surface is a quadrangulation of a connected surface
whose edges are assigned a 2-colouring corresponding to the horizontal and
vertical directions.

Let us introduce a natural encoding of a square-tiled surface that we will use
as our main combinatorial definition. We fix an arbitrary labelling of the
bottom-left and top-right corners of each square by the elements of
$[n]\coloneq \{1, \dots n\}$ where $n$ is twice the number of squares. We define
$\tau_R$, $\tau_G$, $\tau_B$ three involutions without fixed points on $[n]$,
such that $\tau_R$, $\tau_G$ and $\tau_B$ record respectively the adjacencies of
the labels across the horizontal edges, inside a square and across the vertical
edges. By drawing diagonals in each square we turn the square-tiled surface
$\tau$ into a triangulation. In this triangulation, the involution $\tau_G$
encodes adjacencies along the diagonal. We always use the following colour
convention for the edges (see \cref{fig:AbQuadSameStratumLabelled}):
\begin{itemize}
\item horizontal edges (ie the transpositions in the cycle decomposition of $\tau_R$) are red,
\item diagonal edges (ie the transpositions of $\tau_G$) are green,
\item vertical edges (ie the transpositions of $\tau_B$) are blue.
\end{itemize}

\begin{figure}[!h]
\centering
\begin{subfigure}{.4\textwidth}
\includegraphics{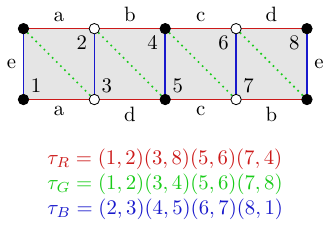}
\caption{A labelling of the square-tiled surface from \cref{sfig:AbSquareTiled}.}
\end{subfigure}%
\hspace{.1\textwidth}
\begin{subfigure}{.4\textwidth}
\includegraphics{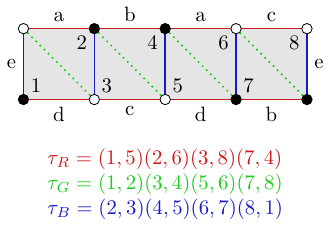}
\caption{A labelling of the square-tiled surface from \cref{sfig:quadSquareTiled}.}
\end{subfigure}
\caption{Labellings of the two square-tiled surfaces from \cref{fig:AbQuadSameStratum}.}
\label{fig:AbQuadSameStratumLabelled}
\end{figure}

Conversely, starting from a triple of involutions without fixed points
$\tau = (\tau_R, \tau_G, \tau_B)$ such that they act transitively on $[n]$, one
can build a square-tiled surface. If the action is not transitive, then the
resulting surface is not connected.

We now introduce folded edges. Instead of using bicoloured squares as building
blocks we use tricoloured triangles (that should be thought as ``half'' of a
square cut along its diagonal). We pick $n$ copies of them that we label from
$1$ to $n$. Then we consider three involutions (possibly with fixed points)
$\tau_R$, $\tau_G$ and $\tau_B$ that determine the gluings of the edges with
colours respectively $R$, $G$ and $B$. If $j$ is a fixed point of $\tau_i$ then
the edge coloured $i$ on the $j$-th triangle is glued to itself by a 180-degree
rotation (see \cref{fig:squareTiledSurfacesWithFoldedEdges}). From now on, we
will use the term square-tiled surface to refer to a triple of involutions
(possibly with fixed points) that act transitively on $[n]$ and we will denote
$S(\tau)$ the tricoloured triangulation we have just constructed (see
\cref{fig:squareTiledSurfacesWithFoldedEdgesS}). We call $n$ the \emph{number of
  triangles} in $S(\tau)$. A \emph{red half-edge} (respectively \emph{green
  half-edge} and \emph{blue half-edge}) is a fixed point of $\tau_R$
(resp. $\tau_G$ and $\tau_B$).

We denote $S^*(\tau)$ the \emph{tricoloured cubic graph} dual to the
triangulation $S(\tau)$ (see
\cref{fig:squareTiledSurfacesWithFoldedEdgesDual}). The graph $S^*(\tau)$ has
vertex set $\{1, \ldots, n\}$. There is a red, green or blue edge between $i$
and $j$ if respectively $(i, j)$ is a transposition in the cycle decomposition
of $\tau_R$, $\tau_G$ or $\tau_B$. And there is a red, green or blue half-edge
at $i$ if it is a fixed point of respectively $\tau_R$, $\tau_G$ or
$\tau_B$. The graph $S^*(\tau)$ is naturally embedded in the square-tiled
surface $S(\tau)$ and at every vertex the cyclic order colours of the three
adjacent edges or half-edges are counter-clockwise in the order red then green
then blue.

We call \emph{$i$-faces} of $S^*(\tau)$ or $3i$-gons the faces of $S^*(\tau)$ of
degree $3i$. For example, there is one 2-face, or also called hexagon, and one
3-face in \cref{fig:squareTiledSurfacesWithFoldedEdgesDual}. Note that the
$i$-faces are dual to the vertices of degree $3i$ in $S(\tau)$.

\begin{figure}[!h]
\begin{subfigure}[t]{.48\textwidth}
  \centering
  \includegraphics{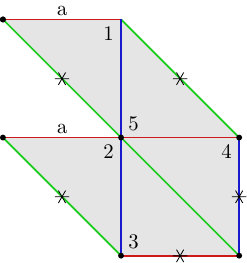}
  \caption{The triangulation $S(\tau)$ encoded by $\tau$}
  \label{fig:squareTiledSurfacesWithFoldedEdgesS}
\end{subfigure}
\hfill
\begin{subfigure}[t]{.48\textwidth}
  \centering
  \includegraphics{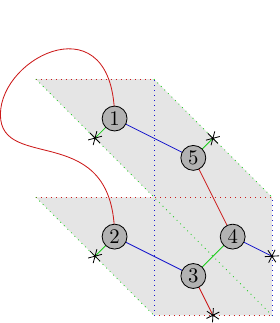}
  \caption{Its dual tricoloured cubic graph $S^*(\tau)$}
  \label{fig:squareTiledSurfacesWithFoldedEdgesDual}
\end{subfigure}
\caption{A square-tiled surface $\tau$ represented by its tricoloured
  triangulation and its tricoloured cubic graph. We have $\tau_R = (1,2)$,
  $\tau_G=(3,4)$ and $\tau_B=(1,5)(2,3)$. In particular, $\tau$ is of genus $0$
  and has profile $([2,3],5)$.  When drawing the tricoloured triangulation of a
  square-tiled surface $\tau$, we will mark with a star the edges glued to
  themselves, that is, the fix points of $\tau$. Likewise, we will represent the
  half-edges in $S^*(\tau)$ with a star at their endpoint.}
\label{fig:squareTiledSurfacesWithFoldedEdges}
\end{figure}

Given a triangulation $S(\tau)$ with $n$ triangles and $\sigma \in S_n$ we
denote
$\tau^\sigma = (\sigma \tau_R \sigma^{-1}, \sigma \tau_G \sigma^{-1},\allowbreak
\sigma \tau_B \sigma^{-1})$. The triangulated square-tiled surface
$S(\tau^\sigma)$ corresponds to the same underlying triangulation as $S(\tau)$
but where the labels of the triangles have been permuted by $\sigma$. We say
that two square-tiled surfaces $\tau$ and $\tau'$ are \emph{isomorphic} if they
have the same number of triangles $n$ and there exists a permutation
$\sigma \in S_n$ such that $\tau' = \tau^\sigma$, in other words, $S(\tau)$ is
isomorphic to $S(\tau')$.

The \emph{profile} of $\tau$ is the pair $(\mu, k)$, where $\mu$ is the integer
partition counting the $i$-faces for any $i$, while $k$ is the total number of
half-edges. We use the standard notation $\mu = [1^{\mu_1}, 2^{\mu_2}, \ldots]$
to denote $1$ repeated $\mu_1$ times, $2$ repeated $\mu_2$ times, etc. We denote
$\ST(\mu,k)$ the set of isomorphism classes of square-tiled surfaces with
profile $(\mu, k)$. Let us note that Euler's characteristic allows to compute
the genus $g$ of a square-tiled surface from its profile $(\mu, k)$ by
\begin{equation} \label{eq:squareTiledSurfaceEulerCharacteristic}
4g - 4 = \sum_i \mu_i \cdot (i - 2) - k.
\end{equation}
Let us also note that the number of triangles $n$ satisfies
\begin{equation}
\label{eq:triangleNumberAndProfile}
n = \sum_{i \ge 1} \mu_i \cdot i.
\end{equation}

A square-tiled surface $\tau$ is \emph{Abelian} if there exists a non-trivial
partition $A_+ \sqcup A_- = [n]$ such that $\tau_i(A_+) = A_-$ for each
$i \in \{R,G,B\}$. Equivalently, it is Abelian if it has no half-edges and its
dual graph $S^*(\tau)$ is bipartite. A square-tiled surface which is not Abelian
is called \emph{quadratic}. We denote $\ST_{Ab}(\mu,k)$ and $\ST_{quad}(\mu,k)$
respectively the Abelian and quadratic square-tiled surfaces of profile
$(\mu,k)$. By definition $\ST(\mu,k)$ is the disjoint union of $\ST_{Ab}(\mu,k)$
and $\ST_{quad}(\mu,k)$.

We finish this introduction to square tiled-surfaces by giving a parity
restriction on the profile.
\begin{lemma} \label{lem:parityCondition}
Let $\tau$ be a square-tiled surface on $[n]$ with profile $(\mu, k)$. Then
we have
\begin{equation}
\label{eq:parityCondition}
3 n - k \equiv 2 \sum_{i \ge 1} \mu_{2i} \mod 4.
\end{equation}
\end{lemma}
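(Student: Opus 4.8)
The plan is to derive \eqref{eq:parityCondition} as a purely arithmetic consequence of the two identities already recorded for the profile, namely Euler's relation \eqref{eq:squareTiledSurfaceEulerCharacteristic} and the triangle count \eqref{eq:triangleNumberAndProfile}, by reducing them modulo $4$. Write $F = \sum_{i \ge 1}\mu_i$ for the total number of faces of $S^*(\tau)$. First I would use \eqref{eq:triangleNumberAndProfile} to rewrite $\sum_i \mu_i(i-2) = \sum_i \mu_i\, i - 2\sum_i \mu_i = n - 2F$, so that \eqref{eq:squareTiledSurfaceEulerCharacteristic} reads $n - 2F - k = 4g - 4$. Since $4g - 4 \equiv 0 \pmod 4$, this gives the congruence $n - k \equiv 2F \pmod 4$.

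Next I would turn this into a statement about $3n - k$. Writing $3n - k = (n - k) + 2n$ and substituting the previous congruence yields $3n - k \equiv 2F + 2n \pmod 4$. It then remains to evaluate $2F + 2n = 2\sum_i (i+1)\mu_i$ modulo $4$, i.e. to compute $\sum_i (i+1)\mu_i$ modulo $2$. The only point to observe is that $i+1$ is odd precisely when $i$ is even, so each term $(i+1)\mu_i$ is congruent to $\mu_i$ when $i$ is even and to $0$ when $i$ is odd; hence $\sum_i (i+1)\mu_i \equiv \sum_{i \text{ even}} \mu_i = \sum_{i \ge 1}\mu_{2i} \pmod 2$. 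Multiplying back by $2$ gives exactly $3n - k \equiv 2\sum_{i\ge 1}\mu_{2i} \pmod 4$, which is \eqref{eq:parityCondition}.

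There is essentially no deep obstacle here: the statement is a modulo-$4$ bookkeeping exercise once \eqref{eq:squareTiledSurfaceEulerCharacteristic} and \eqref{eq:triangleNumberAndProfile} are granted. The one step I would be careful about is the legitimacy of reducing \eqref{eq:squareTiledSurfaceEulerCharacteristic} modulo $4$, which uses that the right-hand side $4g-4$ is divisible by $4$; this holds because $g$ is a genuine non-negative integer, a fact already encoded in \eqref{eq:squareTiledSurfaceEulerCharacteristic} and reflecting that square-tiled surfaces with folded edges realise half-translation structures on orientable surfaces.

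As a consistency check, and to isolate what the genus relation really contributes, I would note that a direct count gives only the weaker parity $3n \equiv k \pmod 2$: for each colour $c \in \{R,G,B\}$ the number $k_c$ of $c$-half-edges (fixed points of $\tau_c$) satisfies $n - k_c \equiv 0 \pmod 2$, since the non-fixed points of $\tau_c$ pair up into transpositions, whence $k = k_R + k_G + k_B \equiv 3n \equiv n \pmod 2$ and $3n-k$ is even. The refinement from ``even'' to the exact residue modulo $4$ is precisely the extra information packaged by Euler's formula, which is why the argument above routes through \eqref{eq:squareTiledSurfaceEulerCharacteristic} rather than through the involutions alone.
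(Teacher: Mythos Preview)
Your argument is correct, but it takes a different route from the paper's. The paper proves the lemma directly from the signature morphism on $S_n$: since the faces of $S^*(\tau)$ correspond to the cycles of $\tau_R\tau_G\tau_B$, one has $\epsilon(\tau_R)+\epsilon(\tau_G)+\epsilon(\tau_B)=\epsilon(\tau_R\tau_G\tau_B)$ in $\bZ/2\bZ$; the left side is $\tfrac{3n-k}{2}\bmod 2$ because each $\tau_c$ is an involution with $k_c$ fixed points, and the right side is $\sum_{i\ge1}\mu_{2i}\bmod 2$ because only even cycles contribute odd sign. You instead feed the already-recorded Euler relation \eqref{eq:squareTiledSurfaceEulerCharacteristic} and triangle count \eqref{eq:triangleNumberAndProfile} through a mod-$4$ reduction. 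Both are short; the paper's approach is purely algebraic and avoids invoking that $g$ is an integer, while yours has the virtue of being a mechanical consequence of two identities already stated. Your closing remark that the mod-$4$ refinement ``is precisely the extra information packaged by Euler's formula'' and cannot come ``through the involutions alone'' is therefore not quite accurate: the signature computation extracts exactly that refinement from the involutions, with no appeal to the genus.
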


\begin{proof}
  The faces of $S^*(\tau)$ are in bijection with the cycles of the product
  $\tau_R \tau_G \tau_B$ whose cycle type is $\mu$. We claim
  that~\eqref{eq:parityCondition} follows from the signature morphism
  $\epsilon: (S_n,\cdot) \to (\mathbb{Z} / 2\mathbb{Z},+)$ which gives
  $\epsilon(\tau_R) + \epsilon(\tau_G) + \epsilon(\tau_B) = \epsilon(\mu)$.
  Indeed $\epsilon(\tau_i) = \frac{n - k_i}{2} \mod 2$ where $k_i$ is the number
  of fixed points of $\tau_i$, because $\tau_i$ is an involution. Hence
  $\epsilon(\tau_R) + \epsilon(\tau_G) + \epsilon(\tau_B) = \frac{3n-k}{2} \mod
  2$. And we have $\epsilon(\mu) = \sum_{i \ge 1} \mu_{2i} \mod 2$ which
  concludes the proof of the lemma.
\end{proof}

\subsection{Cylinder shear}\label{ssec:shearDef}\label{sssec:shearDef}
We now introduce formally the cylinder shears on tricoloured cubic graphs
$S^*(\tau)$. They generalise the induction defined in the case of trees
in~\cite{CassaigneFerencziZamboni2011}.

Let $\tau = (\tau_R, \tau_G, \tau_B)$ be a square-tiled surface. Let us
consider a pair of colours $\{i,j\} \subset \{R,G,B\}$. A
\emph{$\{i,j\}$-component} is a connected component of the subgraph of
$S^*(\tau)$ in which we only keep the edges coloured $i$ or $j$. More
combinatorially, these correspond to the orbits of the subgroup
$\langle \tau_i, \tau_j \rangle$. Because the graph induced on $\{i,j\}$ has
degree 2 and possibly half-edges, each $\{i,j\}$-component is either a path or a
cycle and we call them respectively \emph{$\{i,j\}$-path} or
\emph{$\{i,j\}$-cycle}.

Let $c \subset [n]$ be a \RG-component. Let $c_R \coloneq \tau_R|_c$ and
$c_G \coloneq \tau_G|_c$ be the restrictions of $\tau_R$ and $\tau_G$ to $c$ (by
definition $\tau_R$ and $\tau_G$ preserve $c$). The \emph{$(R,G)$-shear along
  $c$} is the square-tiled surface
$\Shear_{c,R,G}(\tau) \coloneq (\tau_R \cdot c_R \cdot c_G, \tau_G \cdot c_R
\cdot c_G, \tau_B)$.

More graphically, the $(R,G)$-shear along $c$ affects $S^*(\tau)$ by
switching colours $R$ and $G$ in $c$ and sliding all adjacent edges coloured
$B$, see \cref{fig:rotation}. Note that $\tau_B$ remains unchanged under this
operation.
\begin{figure}[!ht]
\begin{center}%
\begin{subfigure}{.9\linewidth}
\begin{center}\includegraphics{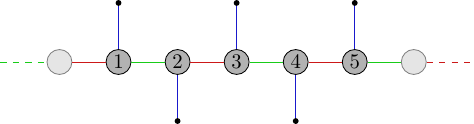}\end{center}
\caption{A piece of a \RG-component of a square-tiled surface $\tau$
  (represented by its dual tricoloured cubic graph) and its neighbourhood.}
\label{fig:originalRGComponent}
\end{subfigure}
\begin{subfigure}{0.4\linewidth}
\includegraphics{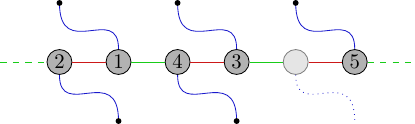}
\caption{A piece of the square-tiled $\Shear_{c,R,G}(\tau)$ with the same vertex position as in \cref{fig:originalRGComponent}.}
\end{subfigure}
\hspace{0.08\linewidth}
\begin{subfigure}{0.4\linewidth}
\includegraphics{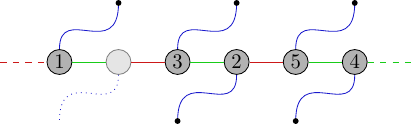}
\caption{A piece of the square-tiled $\Shear_{c,G,R}(\tau)$ with the same vertex position as in \cref{fig:originalRGComponent}.}
\end{subfigure}
\end{center}
\caption{A piece of a \RG-component and its neighbourhood before and after a $(R,G)$-shear.}
\label{fig:rotation}
\end{figure}

Let $\tau' = \Shear_{c,R,G}(\tau)$. The vertices of $S^*(\tau)$ and
$S^*(\tau')$ are $[n]$ and are hence in natural bijection. Moreover, there is a
natural bijection between the edges of $S^*(\tau)$ and $S^*(\tau')$ that
preserves the colours of all edges but those in the component $c$, which are
exchanged.  As we already mentioned, blue edges remain unchanged. Similarly,
green and red edges outside of $c$ remain unchanged. We only need to discuss how
to match the (red and green) edges of $S^*(\tau)$ and $S^*(\tau')$ inside $c$.
Recall that on $c$ we have
\[
\tau'_R|_c = \tau_G|_c
\qquad
\tau'_G|c = (\tau_G \cdot \tau_R \cdot \tau_G)|_c
\]
In particular a green edge (corresponding to transposition of $\tau_G$) or
half-edge (corresponding to a fixed point of $\tau_G$) of $S^*(\tau)$ becomes a
red edge or respectively half-egde of $S^*(\tau')$. And we map a red edge
$(i j)$ or half-edge $(i)$ of $S^*(\tau)$ to the green edge
$(\tau_G(i) \tau_G(j)) = (i j)^{\tau_G}$ or respectively half-edge
$(\tau_G(i)) = (i)^{\tau_G}$.

We define similarly $(i,j)$-shears for any pair of colours in $\{R,G,B\}$ and we
naturally extend the definition of $\Shear_{c,i,j}$ when $c$ is a union of
$\{i,j\}$-components. Note that $\Shear_{c,R,G}$ and $\Shear_{c,G,R}$ are
different operations (they differ on the image of $\tau_B$) and we get 6
possible shears. We call \emph{horizontal cylinders} the \GB-components and
\emph{horizontal shears} the $(B,G)$ and $(G,B)$-shears. Similarly, we call
\emph{vertical cylinders} the \RG-components and \emph{vertical shears} the
$(R,G)$ and $(G,R)$-shears. Note that one could similarly define \emph{diagonal
  shears} as the $(R,B)$ and $(B,R)$-shears, however we will avoid them for the
following reasons. The geometric interpretation of a diagonal cylinder on the
quadrangulation corresponding to a square-tiled surface is unclear and there is
no apparent reason to favour one diagonal over the other. Moreover, these shears
can be realised by a sequence of horizontal and vertical shears whose length is
linear in the size of the diagonal cylinder.

We now prove that shears preserve the stratum and stabilise the Abelian components.
\begin{lemma}\label{lem:profileIsInvariant}
Let $\tau$ be a square-tiled surface, $i$, $j$ two colours in $\{R,G,B\}$ and
$c$ a $\{i,j\}$-component. Let $\tau' = \Shear_{c,i,j}(\tau)$ be the square-tiled
surface obtained after a cylinder shear. Then
\begin{enumerate}
\item $\tau_R \cdot \tau_G \cdot \tau_B = \tau'_R \cdot \tau'_G \cdot \tau'_R$,
\item $\tau$ and $\tau'$ have the same profile,
\item $\tau$ is Abelian if and only if $\tau'$ is.
\end{enumerate}
\end{lemma}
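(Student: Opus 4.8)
The plan is to reduce all three items to a single algebraic identity about the permutation that the shear multiplies in. Since the definition of $\Shear_{c,i,j}$ is symmetric under relabelling the three colours, it suffices to treat the $(R,G)$-shear along an \RG-component $c$; the remaining five shears follow by the same computation after a permutation of $\{R,G,B\}$. Write $s \coloneq c_R \cdot c_G$, a permutation supported on $c$, so that $\tau'_R = \tau_R s$, $\tau'_G = \tau_G s$ and $\tau'_B = \tau_B$. The key fact driving everything is the identity $s \, \tau_G \, s = \tau_G$. I would verify it by splitting $[n]$ into $c$ and its complement. Off $c$, the permutation $s$ is the identity and $\tau_G$ preserves the complement (because $c$ is a union of \RG-orbits, which $\tau_G$ stabilises), so both sides agree there. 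On $c$ one uses $\tau_G|_c = c_G$ together with the involutivity $c_R^2 = c_G^2 = \mathrm{id}$ to collapse $c_R c_G \cdot c_G \cdot c_R c_G = c_G$, which is exactly $\tau_G$ restricted to $c$.

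Granting this identity, item~(1) (invariance of the product $\tau_R \tau_G \tau_B$) is immediate, since $\tau'_R \tau'_G \tau'_B = (\tau_R s)(\tau_G s)\tau_B = \tau_R (s\,\tau_G\,s)\tau_B = \tau_R \tau_G \tau_B$. For item~(2), the partition $\mu$ is the cycle type of $\tau_R \tau_G \tau_B$, hence preserved by item~(1). For the number $k$ of half-edges I would compute the restrictions $\tau'_R|_c = c_G$ and $\tau'_G|_c = c_G c_R c_G$, note that off $c$ the involutions $\tau'_R, \tau'_G$ coincide with $\tau_R, \tau_G$ while $\tau'_B = \tau_B$ throughout, and conclude that the shear merely \emph{exchanges} the red and green half-edges lying in $c$ (a green fixed point of $\tau_G|_c$ becomes a red fixed point of $\tau'_R|_c$, and $c_G c_R c_G$ has the same number of fixed points as $c_R$), while fixing all other half-edges. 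Thus the total fixed-point count $k$ is unchanged.

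For item~(3), I would first observe that $s$ preserves each block of any bipartition of $S^*(\tau)$: being the product of the two part-exchanging involutions $c_R$ and $c_G$, it sends $A_+$ to $A_+$ and $A_-$ to $A_-$. Consequently, if $\tau_i(A_+) = A_-$ for every $i \in \{R,G,B\}$, then $\tau'_R(A_+) = \tau_R s(A_+) = \tau_R(A_+) = A_-$, and likewise for $\tau'_G$ and $\tau'_B$, so $\tau'$ is Abelian with the same partition. For the converse I would record that shears are invertible with inverse again a shear: since $c$ remains an \RG-component of $\tau'$, a direct computation gives $\Shear_{c,G,R}\bigl(\Shear_{c,R,G}(\tau)\bigr) = \tau$ (the product of the two supported permutations telescopes to the identity by involutivity), so Abelianness of $\tau'$ forces that of $\tau$.

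The main obstacle I anticipate is not any individual identity but the careful on-$c$/off-$c$ case analysis that all three items share, namely keeping exact track of which elements the supported permutation $s$ moves. The most delicate point is the fixed-point accounting in item~(2): one must match half-edges \emph{across the colour swap} inside $c$ rather than through a naive term-by-term comparison of $\tau_i$ with $\tau'_i$, since individual involutions do change. Establishing the restriction formulas $\tau'_R|_c = c_G$ and $\tau'_G|_c = c_G c_R c_G$ cleanly is what makes both this count and the invertibility argument for item~(3) go through.
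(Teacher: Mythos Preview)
Your proposal is correct and follows essentially the same approach as the paper. Your central identity $s\,\tau_G\,s=\tau_G$ with $s=c_Rc_G$ is equivalent to the paper's $c_Rc_G\tau_G=\tau_Gc_Gc_R$, both verified by the same on-$c$/off-$c$ split; your restriction formulas $\tau'_R|_c=c_G$ and $\tau'_G|_c=c_Gc_Rc_G$ and the resulting red/green fixed-point swap on $c$ match the paper exactly, and your treatment of item~(3) is the same except that you spell out the converse via invertibility, which the paper defers to the next lemma.
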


\begin{proof}
We only consider the case of $i=R$ and $j=G$ the other cases being similar.

Let $\tau = (\tau_R, \tau_G, \tau_B)$ be a square-tiled surface and $\tau' = (\tau'_R, \tau'_G, \tau'_B) = \Shear_{c,R,G}(\tau)$.
Then
\[
\tau'_R \cdot \tau'_G \cdot \tau'_B
=
(\tau_R \cdot c_R \cdot c_G) \cdot (\tau_G \cdot c_R \cdot c_G) \cdot \tau_B
=
\tau_R \cdot (c_R \cdot c_G \cdot \tau_G \cdot c_R \cdot c_G) \cdot \tau_B
\]
Now we note that $c_R \cdot c_G \cdot \tau_G = \tau_G \cdot c_G \cdot
c_R$. Indeed, outside of $c$ both sides are equal to $\tau_G$ and in $c$ both
sides are equal to $c_R$. In particular, the rightmost term in the chain of
equalities simplifies to $\tau_R \cdot \tau_G \cdot \tau_B$ and concludes the
proof of the first item.

Let $(\mu, k)$ and $(\mu', k')$ be the profiles of respectively $\tau$ and
$\tau'$.  Recall that $(\mu, k)$ is the pair made of the integer partition $\mu$
associated to the conjugacy class of the product $\tau_R \tau_G \tau_B$ and the
integer $k$ which is the sum of the number of fixed points in $\tau_R$, $\tau_G$
and $\tau_B$. By the previous item we have that $\mu = \mu'$. Next, the number
of fixed points of $\tau_B = \tau'_B$ are identical. Also, since $\tau_R$ and
$\tau_G$ are unchanged outside of $c$, the number of fixed points up to the ones
in $c$ coincide. We show that the sum of the number of green and red fixed
points in $c$ are identical. Indeed, we have $\tau'_R|_c = \tau_G|_c$ and
$\tau'_G|_c = (\tau_G \cdot \tau_R \cdot \tau_G)|_c$ and hence the number of red
and green fixed points are exchanged in $c$. This shows that the total number of
fixed points remains unchanged.

Finally, if $\tau$ admits an invariant partition $A_+ \sqcup A_-$ then the same partition
is invariant by~$\tau'$.
\end{proof}

Next, we immediately see from the definition that $\Shear_{c,R,G}$ and $\Shear_{c,G,R}$
are inverse of each other.
\begin{lemma}
Let $\tau$ be a tricoloured cubic graph, $i$, $j$ two colours in $\{R,G,B\}$ and
$c$ a $\{i,j\}$-component. Then $c$ is also a $\{i,j\}$-component in
$\Shear_{c,i,j}(\tau)$ and $\Shear_{c,j,i} (\Shear_{c,i,j}(\tau)) = \tau$.
\end{lemma}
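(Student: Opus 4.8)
The plan is to verify the final lemma directly from the explicit formulas defining the shears. Recall that for $i=R$, $j=G$ the shear is $\Shear_{c,R,G}(\tau) = (\tau_R c_R c_G, \tau_G c_R c_G, \tau_B)$, where $c_R = \tau_R|_c$ and $c_G = \tau_G|_c$. The first claim—that $c$ is again a $\{R,G\}$-component of $\tau' = \Shear_{c,R,G}(\tau)$—should follow from the observation that the new red and green involutions agree with the old ones (up to the swap $\tau'_R|_c = \tau_G|_c$, $\tau'_G|_c = (\tau_G \tau_R \tau_G)|_c$) precisely on the set $c$ and are unchanged off $c$. I would first check that $\tau'_R$ and $\tau'_G$ both preserve $c$ setwise, so that the orbits of $\langle \tau'_R, \tau'_G\rangle$ restricted to $c$ stay inside $c$; then argue that the restricted action on $c$ is transitive because it is merely a relabelling of the original transitive action of $\langle \tau_R|_c, \tau_G|_c\rangle$ (conjugation/composition by the permutations $c_R, c_G$ supported on $c$ cannot disconnect a connected component). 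Hence $c$ is a single $\{R,G\}$-component of $\tau'$ as well.

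For the involution-inverse claim, the cleanest route is to compute $\Shear_{c,G,R}(\tau')$ term by term and simplify. By definition $\Shear_{c,G,R}(\tau') = (\tau'_R \cdot c'_R \cdot c'_G,\ \tau'_G \cdot c'_G \cdot c'_R,\ \tau'_B)$, where now $c'_R = \tau'_R|_c$ and $c'_G = \tau'_G|_c$ are the restrictions taken in $\tau'$. The key identities to establish are $c'_R = c_G$ and $c'_G = c_G c_R c_G$, which are immediate from $\tau'_R|_c = \tau_G|_c$ and $\tau'_G|_c = (\tau_G\tau_R\tau_G)|_c$ together with the fact that $c_R$, $c_G$ are exactly the restrictions of $\tau_R$, $\tau_G$ to the invariant set $c$. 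Substituting these into the formula for $\Shear_{c,G,R}(\tau')$ and using that $c_R$, $c_G$ are involutions supported on $c$ (so $c_R^2 = c_G^2 = \mathrm{id}$ on $c$, acting trivially off $c$), the red component becomes $\tau'_R c'_R c'_G = (\tau_R c_R c_G)(c_G)(c_G c_R c_G) = \tau_R c_R c_G c_G c_R c_G$, which should collapse to $\tau_R$ once one cancels $c_G c_G = \mathrm{id}$ and $c_R c_R = \mathrm{id}$ in order. An analogous cancellation should recover $\tau'_G c'_G c'_R = \tau_G$, and $\tau'_B = \tau_B$ is untouched, giving $\Shear_{c,G,R}(\Shear_{c,R,G}(\tau)) = \tau$.

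The one point demanding genuine care—and the step I expect to be the main obstacle—is bookkeeping the fact that $c_R, c_G$ are the restrictions to $c$ rather than the full involutions, and that these restrictions must be recomputed in $\tau'$. I must confirm that $c'_R$ and $c'_G$, defined intrinsically as restrictions of $\tau'_R$ and $\tau'_G$ to $c$ in the sheared surface, really equal $c_G$ and $c_G c_R c_G$ and not something involving the ambient $\tau_R, \tau_G$ off $c$; this is where the first claim (that $c$ is still a $\{R,G\}$-component, hence invariant under $\tau'_R, \tau'_G$) is logically needed, so that the restriction operation is well-defined and the commutation lemma $c_R c_G \tau_G = \tau_G c_G c_R$ used earlier in \cref{lem:profileIsInvariant} applies. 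Once the invariance and the identities $c'_R = c_G$, $c'_G = c_G c_R c_G$ are pinned down, the remaining algebra is a short involution cancellation. The cases of other colour pairs $\{i,j\}$ follow by the same computation with the colour labels permuted, so it suffices to treat $\{R,G\}$ in detail.
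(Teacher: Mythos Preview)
Your overall approach is sound and is precisely what the paper has in mind (the paper in fact gives no proof, merely stating that the claim is immediate from the definition). However, the formula you wrote for $\Shear_{c,G,R}(\tau')$ is inconsistent: by the paper's convention, $\Shear_{c,i,j}$ multiplies \emph{both} the $i$- and $j$-coloured involutions on the right by $c_i c_j$, so
\[
\Shear_{c,G,R}(\tau') = (\tau'_R\, c'_G c'_R,\ \tau'_G\, c'_G c'_R,\ \tau'_B),
\]
with $c'_G c'_R$ in both slots. You wrote $c'_R c'_G$ in the first slot, and the resulting cancellation does not go through: with your formula the red component would be $(\tau_R c_R c_G)(c_G)(c_G c_R c_G) = \tau_R c_R c_G c_R c_G$, not $\tau_R$. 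With the corrected order one gets
\[
(\tau_R c_R c_G)(c_G c_R c_G)(c_G) = \tau_R,\qquad (\tau_G c_R c_G)(c_G c_R c_G)(c_G) = \tau_G,
\]
and $\tau'_B = \tau_B$, as desired.

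Your argument for the first claim is correct once made precise: since $\tau'_R|_c = c_G$ and $\tau'_G|_c = c_G c_R c_G$, the group $\langle \tau'_R|_c, \tau'_G|_c\rangle$ contains $c_G$ and $c_G(c_G c_R c_G)c_G = c_R$, hence equals $\langle c_R, c_G\rangle$; thus the orbit on $c$ is still all of $c$, and outside $c$ nothing changes, so $c$ remains a single $\{R,G\}$-component.
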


Finally, let us conclude with an analogy between cylinder shears in square-tiled
surfaces and a reconfiguration problem on graph colourings.  An
\emph{edge-colouring} of a graph is an assignment of colours to the edges, such
that incident edges receive different colours. A Kempe change consists in
swapping two colours in a maximal bichromatic component of the colouring. So a
cylinder shear comes down to performing a Kempe change on the 3-edge-colouring
of $S^*(\tau)$, before sliding the incident edges to preserve the cyclic
ordering of the colours around the vertices of $S^*(\tau)$.

\subsection{Abelian and quadratic
  differentials} \label{ssec:AbelianAndQuadraticDifferentials} We now introduce
Abelian and quadratic differentials on Riemann surfaces and their associated
moduli spaces. We explain how Abelian and quadratic square-tiled surfaces are
particular cases of differentials and how a cylinder shear can be realised as a
continuous path in the moduli space. The main consequence of this fact is the
proof of~\cref{prop:connectedComponentsAndConnectedComponents}. Beyond the proof
of this proposition, the definitions in this section will be used only
in~\cref{sec:hyperellipticSquareTiledSurfaces}.

For a more detailed introduction to Abelian and quadratic differentials, we refer the
reader to the survey~\cite{Zorich2006} and the
book~\cite{AthreyaMasur2024}.

One often uses the term \emph{translation surface} to refer to a compact Riemann
surface endowed with a non-zero Abelian differential (or equivalently a
holomorphic one-form). The reason is that the most elementary way to define such
object is by considering a finite collection of Euclidean polygons whose edges
are identified by translations (see~\cite[Section~1.2]{Zorich2006}
and~\cite[Chapter~2]{AthreyaMasur2024}). The Abelian square-tiled surfaces we
consider in this article are indeed particular cases of translation
surfaces. More precisely, let $\tau = (\tau_R, \tau_G, \tau_B)$ be an Abelian
square-tiled surface on $[n]$ where $n$ is twice the number of
quadrilaterals. Let $A^+ \sqcup A^-$ be a $\tau$-invariant bipartition of
$[n]$. We consider $n/2$ copies of unit squares with bottom left labelled with
elements of $A^+$ and from now on, we identify the squares with their labels in
$A^+$. Recall that $[n]$ corresponds to the triangles and that the ``other
half'' of the bottom left triangle labelled $i \in A^+$ is labelled
$\tau_G(i) \in A^-$ so that a square is made of two triangles. Now we glue the
right side of the square labelled $i \in A^+$ to the left side of
$\tau_R(\tau_G(i))$ and its top side to the bottom side of $\tau_B(\tau_G(i))$.
Note that there is one ambiguity in the construction which is the choice of the
bipartition. If the partitions $A^+$ and $A^-$ are swapped, we obtain a
(possibly different) translation surface built from squares that are all rotated
by 180 degrees. In other words, we associate to an Abelian square-tiled surface
$\tau$ a translation surface up to rotation by 180 degrees that we denote by
$\pm S$.

Two translation surfaces $S$ and $S'$ are \emph{isomorphic} if one can obtain
one from the other by a sequence of cutting and gluing operations on the
polygons (see~\cite[Section~2.5.4]{AthreyaMasur2024}).
\begin{lemma}\label{lem:isomorphismAbelian}
Let $\tau$ and $\tau'$ be two Abelian square-tiled surfaces on $[n]$.
Then $\tau$ and $\tau'$ are isomorphic (for the definition from
\cref{sssec:cubicEdgeTricoloredCubicGraphs}) if and only if
the associated translation surfaces $\pm S$ and $\pm S'$ are isomorphic.
\end{lemma}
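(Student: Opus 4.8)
The statement asserts that two Abelian square-tiled surfaces $\tau,\tau'$ on $[n]$ are isomorphic in the combinatorial sense (related by a relabelling permutation $\sigma\in S_n$) if and only if the associated translation surfaces $\pm S$ and $\pm S'$ are isomorphic. This is an equivalence, so I need both directions.

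Let me think about what's really being claimed. The combinatorial isomorphism is conjugation by $\sigma$. The translation surface isomorphism is cutting/gluing equivalence. The "$\pm$" is the ambiguity from the bipartition choice.

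Let me sketch the approach.

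The plan is to prove both implications, with the forward direction ($\tau \cong \tau' \Rightarrow \pm S \cong \pm S'$) being essentially immediate and the reverse direction carrying the real content. First I would handle the easy direction: if $\tau' = \tau^\sigma$ for some $\sigma \in S_n$, then $\sigma$ permutes the triangles and, because it conjugates all three involutions simultaneously, it respects every gluing; in particular it sends the invariant bipartition $A^+ \sqcup A^-$ of $\tau$ to an invariant bipartition of $\tau'$. Relabelling the unit squares by the restriction of $\sigma$ to $A^+$ therefore gives an identification of the polygonal presentations of $S$ and $S'$ that matches sides glued by translations. If $\sigma$ preserves the chosen bipartition we get $S \cong S'$ directly; if it swaps $A^+$ and $A^-$ we land on the $180^\circ$-rotated presentation, which is exactly why the conclusion is stated up to the $\pm$ sign.

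The reverse direction is the main obstacle, and the crux is that a translation-surface isomorphism is a priori a geometric/analytic notion (a sequence of cut-and-paste moves, or a translation-equivariant biholomorphism), whereas combinatorial isomorphism is purely discrete. The key idea is that the square-tiled structure is intrinsic: the horizontal and vertical foliations are canonically determined by the Abelian differential, so any isomorphism of translation surfaces must carry the horizontal saddle connections to horizontal ones and vertical to vertical, hence send unit squares to unit squares. More precisely, I would argue that from the translation surface $S$ one can reconstruct $\tau$ up to combinatorial isomorphism: the squares are the complementary regions of the union of horizontal and vertical separatrices through the singularities, the involution $\tau_B$ is recovered from vertical adjacency, $\tau_R$ from horizontal adjacency, and $\tau_G$ from the diagonal that splits each square into its two labelled triangles. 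An isomorphism $S \to S'$, being translation-equivariant, preserves all of this data and thus induces a bijection of squares — and therefore of triangles — that conjugates the three involutions, i.e.\ a combinatorial isomorphism. The $\pm$ ambiguity enters because a translation isomorphism composed with the $180^\circ$ rotation is again a translation isomorphism, and this is precisely the freedom of swapping $A^+$ and $A^-$.

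The one technical point I would be careful about is the reduction from "sequence of cutting and gluing operations" (the definition given) to a genuine translation-equivariant map: I would either invoke the standard fact that cut-and-paste equivalence coincides with the existence of a translation isomorphism (cf.\ the cited \cite{AthreyaMasur2024}, Section~2.5.4), or check directly that each elementary cut-and-paste move preserves the reconstructed combinatorial data, so that the composite move respects the square decomposition. With the canonical reconstruction of $\tau$ from $S$ in hand, both implications close, and I would state the $\pm$ normalization explicitly at the end to make clear that the two possible bipartitions account exactly for the sign.
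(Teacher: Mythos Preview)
Your approach is essentially the same as the paper's: both directions are handled the same way, and the heart of the reverse implication is the observation that the combinatorial square decomposition can be reconstructed intrinsically from the translation structure. Two points deserve attention, however.

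First, and most importantly, your reconstruction argument (``the squares are the complementary regions of the union of horizontal and vertical separatrices through the singularities'') presupposes that the surface \emph{has} singularities. The flat torus case $\cH(\emptyset)$ has none, so no separatrix exists and your argument as stated says nothing. The paper treats this case separately: on a flat torus $\bR^2/\Lambda$ the translation group acts transitively, so any two choices of ``origin'' yield isomorphic translation surfaces, and one may pick an arbitrary marked point to play the role of a singularity and then run the same reconstruction. You should add this case.

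Second, even when singularities exist, the separatrices through them need not exhaust the grid: a horizontal line through a regular vertex (one contributing to $\mu_2$) may be a closed geodesic missing every singularity. The paper handles this by an iterative process---take saddle connections from the singularities, mark the integer points along them, take horizontal and vertical saddle connections from those new marked points, and repeat---invoking connectedness of the triangulation to ensure all edges are eventually recovered. Your one-line description glosses over this, and a reader might (reasonably) object that complementary regions of the singular separatrices alone could be larger rectangles rather than unit squares. This is easily fixed once noticed, but it is worth spelling out.
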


In order to prove \cref{lem:isomorphismAbelian}, we need to introduce two
additional definitions. A translation surface $S$ is naturally a metric space
where the distance is induced from the Euclidean distance on each polygon
forming $S$. Endowed with this metric, each point on a translation surface has a
neighbourhood which is isometric to the neighbourhood of the apex of a flat cone
of angle $(1 + k) 2 \pi$ where $k$ is a non-negative integer. When $k=0$ such a
point is called \emph{regular}. For $k > 0$, such a point is called a
\emph{conical singularity of angle $(1+k) 2\pi$} and $k$ is called the
\emph{excess}. Any point inside a polygon or inside an edge is regular. However,
the vertices might be regular or a conical singularity. For example, the corners
of the squares in \cref{sfig:AbSquareTiled} correspond to two points on the
associated translation surface. Both of these points have angle $4\pi$, in other
words they have excess $1$.

To $S$ we associate the integer partition $\kappa = [1^{\kappa_1},
2^{\kappa_2}, \ldots]$ where $\kappa_i$ is the number of conical singularities
of excess $i$ on $S$. The genus of $S$ is related to $\kappa$ by the following
Euler's formula $\sum \mu_i \cdot i = 2g - 2$.
For a square-tiled surface $S(\tau)$ one can obtain the
integer partition $\kappa$ from its profile $\mu$ as $\kappa = [1^{\mu_4},
2^{\mu_6}, \ldots]$.

Beyond the metric, one can make sense of \emph{straight line segments} in $S$
and associate to a straight line segment a \emph{direction}. One uses the
standard Euclidean notion in each polygon of $S$ and such definition can be made
global since the gluings are done by translation.  Namely, segments can be
defined on each polygon $S$ is made of and pieces of segments ending in middle
of edges of the polygons can be glued together to form longer segments. In a
square-tiled surfaces, the red, blue and green edges are examples of horizontal
segments, vertical segments and segments of slope -1. A \emph{saddle connection}
in $S$ is a segment both of whose endpoints are conical singularities (possibly
the same).

We are now ready to prove \cref{lem:isomorphismAbelian}.

\begin{proof}
  Let us first remark that the number of triangles $n$ of $\tau$ and $\tau'$ is
  twice the number of squares, which is the areas of $S$ and $S'$.

  It is clear that if $\tau$ and $\tau'$ are isomorphic then $\pm S$ and
  $\pm S'$ are isomorphic. Indeed, the labelling of the squares is irrelevant in
  the geometric structure.

  We now focus on the other implication. Namely, assume that $S$ and $S'$ are
  isomorphic as translation surfaces. Let us first remark that the isomorphism
  of translation surfaces preserves area and singularities. Hence the number of
  squares in $\tau$ and $\tau'$ are identical as well as the vector of excesses
  $\kappa$ and $\kappa'$.

  In order to prove that $\tau$ and $\tau'$ are isomorphic we show that the
  triangulation of $S$ induced by $\tau$ can be reconstructed from the geometric
  structure of $S$ only. It will result that if $S$ and $S'$ are isomorphic,
  then the underlying triangulations are isomorphic and so are $\tau$ and
  $\tau'$.

  Let us first assume that $S$ has at least one conical singularity (ie it is
  not a torus in $\cH(\emptyset)$). We consider in $S$ the horizontal and
  vertical saddle connections issued from these conical singularities. These
  saddle connections are respectively unions of red and blue edges of the
  underlying triangulation. The lengths of each of these saddle connections is
  integral and we mark the regular points at each unit length along them. These
  marked points are vertices of $\tau$. We then consider horizontal and vertical
  saddle connections issued from these marked points and iterate the process
  until there is no new segment discovered. Each time a new regular point is
  marked it is a vertex of $\tau$, and each horizontal and vertical saddle
  connection built is a subset of the red and blue edges of the triangulation
  respectively. By connectedness of the triangulation, all red and blue edges
  are recovered in this way. The red and blue edges form a square tiling and one
  obtains the green edges by picking the north-west to south-east diagonals
  (which is the same diagonal on $S$ and $-S$). This concludes the case when $S$
  is not a flat torus.

  We now consider the case of $S$ being a flat torus. In this case $S$ is
  isomorphic to a quotient $\bR^2 / \Lambda$ where $\Lambda$ is a lattice and
  hence admits a group structure.  For each element $x \in \bR^2 / \Lambda$ the
  translation $y \mapsto x + y$ is an isomorphism of $S$. In particular, given
  any two points $x$ and $y$ on $S$ there exists a unique isomorphism of $S$
  that maps $x$ to $y$ (the translation by $y - x$).  Let us choose an arbitrary
  point in $S$ that we declare as a marked point. By the previous discussion,
  the choice of such point is geometrically irrelevant. From there, we can
  perform the construction of the previous paragraph starting at this regular
  marked point.  We obtain a triangulation which is isomorphic to the one
  obtained from $\tau$.
\end{proof}

The set of isomorphism classes of the translation surfaces with fixed vector
$\kappa$ forms an algebraic variety, in particular a topological space which is
denoted $\cH(\kappa)$ (see~\cite[Section~3.1]{Zorich2006}
and~\cite[Chapter~3]{AthreyaMasur2024}). It follows that $\cH(\kappa)$ has
finitely many connected components, which are also path-connected. These
connected components have been classified in~\cite{KontsevichZorich2003}.  The
topology of $\cH(\kappa)$ is easy to describe: two surfaces are nearby if they
are obtained by the same gluing patterns of nearby collections of polygons.

\begin{lemma}\label{lem:cylinderShearIsAPath-Abelian}
  Let $S$ in $\cH(\kappa)$ be the translation surface associated to an Abelian
  square-tiled surface $\tau$. Let $S'$ be the translation surface associated to
  the square-tiled surface $\tau' = \Shear_{c,i,j}(\tau)$ obtained after a cylinder
  shear, where $c$ is some $\{i,j\}$-component of $\tau$. Then $S'$ belongs to
  $\cH(\kappa)$ and there is a continuous path from $S$ to $S'$ in
  $\cH(\kappa)$.
\end{lemma}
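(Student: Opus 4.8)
The plan is to exhibit a one-parameter family of translation surfaces $S_t$ for $t \in [0,1]$ that interpolates between $S = S_0$ and $S' = S_1$, built by literally performing the shear continuously. The key observation is that a cylinder shear is, geometrically, a Dehn twist along a cylinder, and a Dehn twist can be realized as the endpoint of a continuous shearing of the flat metric that keeps the stratum fixed. I would first treat the case where $\{i,j\} = \{G,B\}$, so that $c$ is a horizontal cylinder: geometrically $c$ corresponds to a maximal horizontal cylinder (a union of squares glued cyclically along vertical edges), of some circumference $w$ (the number of squares) and height $1$. The construction of $S$ from polygons in \cref{ssec:AbelianAndQuadraticDifferentials} lets me isolate this cylinder as an embedded Euclidean cylinder $\bR/w\bZ \times [0,1]$ glued into the rest of the surface along its top and bottom boundary circles.

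Next I would define the deformation. Fix the complement of the cylinder rigidly and, for $t \in [0,1]$, replace the embedded cylinder by the sheared cylinder obtained from the parallelogram $\{(x,y) : 0 \le y \le 1,\ tx_{\mathrm{shift}} \le x - ty \le w + tx_{\mathrm{shift}}\}$, i.e. apply the linear shear $\begin{pmatrix} 1 & t \\ 0 & 1 \end{pmatrix}$ to the cylinder while leaving the gluing maps on the two boundary circles combinatorially unchanged. Because the shear fixes the horizontal direction pointwise on each boundary circle, the top and bottom gluings to the fixed complement remain isometries by translation for every $t$, so each $S_t$ is a well-defined translation surface. The cone points are exactly the images of the original cone points, and the shear is a local isometry away from the boundary circles and a translation-compatible map across them, so the excess vector $\kappa$ is preserved: $S_t \in \cH(\kappa)$ for all $t$. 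The map $t \mapsto S_t$ is continuous in the topology of $\cH(\kappa)$ described in the excerpt, since nearby values of $t$ give nearby collections of polygons with identical gluing patterns. At $t=1$ the cylinder has been sheared by one full unit of circumference, and I must check that the resulting combinatorial gluing data is exactly $\Shear_{c,G,B}(\tau)$; this is where the integrality of $w$ and the definition of the shear via $\tau_R, \tau_G, \tau_B$ enter, and it amounts to matching the relabelled edges of the quadrangulation with the action $\tau_B \mapsto \tau_B \cdot c_B \cdot c_G$ etc.

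The main obstacle, and the step requiring the most care, is verifying that $S_1$ really is $S'$ rather than merely an isomorphic-looking surface — that the one full Dehn twist induces precisely the combinatorial shear operation on the triangulation. I would handle this by tracking how the horizontal and vertical saddle connections crossing the cylinder are reattached after the twist: a vertical edge entering the bottom of the cylinder exits the top displaced by one square, which is exactly the sliding of the blue edges described after the definition of $\Shear_{c,G,B}$ and depicted in \cref{fig:rotation}. Here \cref{lem:isomorphismAbelian} is the crucial tool: it suffices to show that $S_1$ and $S'$ are isomorphic as translation surfaces, since by that lemma this forces $\tau'$ and the square-tiled surface underlying $S_1$ to be isomorphic, and I have arranged the deformation so that the gluing combinatorics at $t=1$ match the shear by construction. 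The vertical-cylinder case $\{i,j\}=\{R,G\}$ is symmetric, obtained by applying the analogous vertical shear $\begin{pmatrix} 1 & 0 \\ t & 1 \end{pmatrix}$ (equivalently by exchanging the roles of the horizontal and vertical directions), so a single argument with a change of coordinates covers both and completes the proof.
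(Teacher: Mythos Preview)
Your proposal is correct and takes essentially the same approach as the paper: both realise the cylinder shear as the endpoint of a continuous one-parameter family obtained by shearing the cylinder, with the paper phrasing this as replacing each unit square in $c$ by the slanted parallelogram with vertices $(0,0),(1,0),(1+t,1),(t,1)$ while leaving the remaining squares unchanged. The paper treats the identification $S_1 \cong S'$ as immediate and does not invoke \cref{lem:isomorphismAbelian} (indeed that lemma is not needed here, since equality in $\cH(\kappa)$ \emph{is} isomorphism of translation surfaces), but the underlying construction is identical.
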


\begin{proof}
  By \cref{lem:profileIsInvariant}, the profile of $\tau'$ is the same as the
  one of $\tau$ and $\tau'$ is Abelian. Hence the translation surface $S'$
  belongs to the same stratum $\cH(\kappa)$ as $S'$. Though, we will also obtain
  this fact from the explicit construction of a continuous path from $S$ to
  $S'$.

  Now, our goal is to build a continuous path of translation surfaces
  $\{S_t\}_{t \in [0,1]}$ inside $\cH(\kappa)$ such that $S_0 = S$ and
  $S_1 = S'$. The $\{i,j\}$-component $c$ in $\tau$ corresponds to a horizontal
  cylinder in the translation surface $S$. More precisely, the closure of the
  subset of squares corresponding to this $\{i,j\}$-component forms a cylinder
  of height one. We define the surface $S_t$ to be the translation surface built
  from an identical number of quadrilaterals where the quadrilaterals outside of
  $c$ remains squares but the one in $c$ are the ``slanted'' quadrilaterals with
  vertices $[(0,0),(1,0),(1+t,1),(t,0)]$. It is easy to notice that $S_1$ and
  $S'$ are indeed isomorphic.
\end{proof}

We now turn to quadratic differentials on Riemann surfaces which are sometimes
referred to as \emph{half-translation surfaces}. One can build a
half-translation surface by taking finitely many Euclidean polygons and gluing
their edges using translations and 180-degree rotations
(see~\cite[Section~8.1]{Zorich2006}
and~\cite[Section~2.7]{AthreyaMasur2024}). Similarly to the Abelian case, we
associate to a half-translation surface an integer partition with parts in
$\{-1, 1, 2, 3, \ldots\}$ denoted
$\kappa = [(-1)^{\kappa_{-1}}, 1^{\kappa_1}, \ldots]$ such that there are
$\kappa_i$ singularities of angle $(2 + i) \pi$. Euler's formula in the
quadratic case reads $\sum i \cdot \kappa_i = 4g-4$. Similarly to the Abelian
case, there exist strata $\cQ(\kappa)$ that consist of quadratic strata that are
not squares of Abelian differentials.

Given a quadratic square-tiled surface in $\ST_{quad}(\mu, k)$ on $[n]$, one
obtains a half-translation surface built from $n$ right-angled triangles (which
should be thought of as ``half squares''). The vector $\kappa$ associated to
this half-translation surface has integer partition
$[(-1)^{k + \mu_1}, 1^{\mu_3}, 2^{\mu_4}, \ldots]$. Note that both half-edges
(corresponding to $k$) and vertices of degree $1$ (corresponding to $\mu_1$)
give rise to singularities of angle $\pi$.

Similarly to \cref{lem:cylinderShearIsAPath-Abelian}, we have the following.
\begin{lemma}\label{lem:cylinderShearIsAPath-quadratic}
  Let $S$ in $\cQ(\kappa)$ be the half-translation surface associated to a
  quadratic square-tiled surface $\tau$. Let $S'$ be the half-translation
  surface associated to the square-tiled surface $\tau' = \Shear_{c,i,j}(\tau)$
  obtained after a cylinder shear, where $c$ is some $\{i,j\}$-component of
  $\tau$. Then $S'$ belongs to $\cQ(\kappa)$ and there is a continuous path from
  $S$ to $S'$ in $\cQ(\kappa)$.
\end{lemma}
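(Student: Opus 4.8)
The plan is to mirror the two steps of the proof of \cref{lem:cylinderShearIsAPath-Abelian}: first show that $S'$ stays in $\cQ(\kappa)$, then exhibit an explicit continuous family $\{S_t\}_{t\in[0,1]}$ of half-translation surfaces with $S_0=S$ and $S_1=S'$.

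For the stratum invariance, I would apply \cref{lem:profileIsInvariant}: the shear leaves the profile $(\mu,k)$ unchanged and, by item~(3) of that lemma, sends a quadratic square-tiled surface to a quadratic one. Since the singularity vector of a quadratic square-tiled surface is determined by its profile through $\kappa=[(-1)^{k+\mu_1},1^{\mu_3},2^{\mu_4},\ldots]$, the surface $S'$ has the same $\kappa$ as $S$ and hence lies in $\cQ(\kappa)$; in particular the $\pi$-angle cone points, which arise from both the $k$ half-edges and the $\mu_1$ degree-one vertices, are preserved in number.

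For the path, I would proceed as in the Abelian case. The $\{i,j\}$-component $c$ corresponds to a cylinder of height one in $S$ (horizontal for $\{G,B\}$, vertical for $\{R,G\}$), built from the right-angled triangles indexed by $c$; after a rotation of coordinates I may assume this direction is horizontal. When $c$ is an $\{i,j\}$-cycle the cylinder closes up, and when it is an $\{i,j\}$-path its two ends are capped by the $180$-degree self-gluings at the terminating half-edges, each producing a cone point of angle $\pi$. I then define $S_t$ by keeping every triangle outside $c$ fixed and slanting the triangles of $c$ under $(x,y)\mapsto(x+ty,y)$, the boundary edges adjacent to $c$ (the ones slid by the shear) reconnecting to their neighbours with a continuously varying horizontal offset $t$. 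For every $t\in[0,1]$ this produces a half-translation surface in $\cQ(\kappa)$: slanting moves cone points along the boundary of $c$ but neither merges, splits, nor changes the angle of any of them, and it is compatible with the self-gluings at the capped ends; continuity in $\cQ(\kappa)$ holds because only the shapes of the polygons, not the gluing pattern, vary with $t$. At $t=1$ the gluing combinatorics coincide with those prescribed by $\tau'=\Shear_{c,i,j}(\tau)$, so $S_1\cong S'$.

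The step I expect to be the main obstacle is the local analysis at the folded ends of an $\{i,j\}$-path, which has no counterpart in the Abelian setting: I must check that slanting an edge glued to itself by a $180$-degree rotation again yields a valid self-gluing for every $t$, and that the resulting $\pi$-cone point persists throughout the deformation. Once this compatibility at the caps is verified, the invariance $S_t\in\cQ(\kappa)$, the continuity of the family, and the identification $S_1\cong S'$ all follow exactly as in the proof of \cref{lem:cylinderShearIsAPath-Abelian}.
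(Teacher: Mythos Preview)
Your approach is essentially the paper's: invoke \cref{lem:profileIsInvariant} for stratum invariance, then shear the cylinder corresponding to $c$ continuously from $t=0$ to $t=1$. The paper's proof is in fact briefer than yours and simply says the Abelian argument extends verbatim when $c$ is a cycle, and that the path case ``still'' yields a horizontal cylinder, only of height~$\tfrac12$ rather than~$1$.

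That last point is where your write-up diverges slightly from the paper. You describe the path case as a height-$1$ cylinder whose two ends are ``capped'' by the $180^\circ$ self-gluings, and you flag the behaviour at those caps as the main obstacle. The paper's observation is cleaner: once the two folded edges at the extremities of the $\{i,j\}$-path are taken into account, the union of triangles indexed by $c$ is already a genuine flat cylinder of height~$\tfrac12$ (the fold identifies the top half of the strip with the bottom half), with the two $\pi$-cone points sitting on its boundary circles. Seen this way there is nothing special to check at the ends: the same affine shear $(x,y)\mapsto(x+ty,y)$ acts on this height-$\tfrac12$ cylinder exactly as in the Abelian height-$1$ case, and the rest of the argument is identical. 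So your concern about compatibility of the slant with the self-gluings dissolves once you correct the height from $1$ to $\tfrac12$ and recognise the resulting object as an honest cylinder.
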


\begin{proof}
  By \cref{lem:profileIsInvariant}, $\tau'$ is quadratic and its profile is the
  same as the one of $\tau$. Hence, the half-translation surface $S'$ belongs to
  the same stratum $\cQ(\kappa)$ as $S'$.

  It is straightforward to extend the proof of
  \cref{lem:cylinderShearIsAPath-Abelian} to the quadratic case when $c$ is a
  cycle. In the case where $c$ is a path, the union of triangles corresponding
  to $c$ in $S$ is still a horizontal cylinder, though of height $1/2$. The same
  shearing construction works.
\end{proof}

We can now prove \cref{prop:connectedComponentsAndConnectedComponents}, which
states that the connected components induced by the cylinder shears refine the
connected components of the strata of the moduli space of quadratic
differentials.
\begin{proof}[Proof of \cref{prop:connectedComponentsAndConnectedComponents}]
  By \cref{lem:cylinderShearIsAPath-Abelian} and
  \cref{lem:cylinderShearIsAPath-quadratic} cylinder shears can be realised as a
  continuous path in respectively $\cH(\kappa)$ and $\cQ(\kappa)$.  In
  particular, if $\tau$ belongs to a given connected component of $\cH(\kappa)$
  or $\cQ(\kappa)$ its image under a cylinder shear belongs to the same
  connected component.
\end{proof}

\subsection{Cylinder decomposition and weighted stable
  graphs} \label{sssec:weightedStableGraph} We introduce now the weighted stable
graph of a square-tiled surface that encodes the geometry of its
\GB-components. It follows closely the notions
in~\cite[Section~4.7]{delecroix2020Enumeration}
and~\cite[Section~2.2]{DelecroixGoujardZografZorich2021}. The weighted stable
graph will be the main tool in \cref{sec:connectingtoPathLike} but will not be
used elsewhere.

Let $\tau$ be a square-tiled surface. Recall from
\cref{ssec:AbelianAndQuadraticDifferentials} that when viewed as the gluing of
right-angled triangles, $S(\tau)$ carries the geometry of an Abelian or
quadratic differential. In such a surface, it makes sense to consider horizontal
segments: these are continuous paths that are horizontal in every triangle. The
\emph{critical graph} of $S(\tau)$ is the graph embedded in the surface
$S(\tau)$ whose vertices are the singularities of $S(\tau)$ (i.e.\ the union of
vertices of $\tau$ whose degree is different from 6 and the mid-points of
half-edges) and its edges are the union of all horizontal segments ending at
these singularities. A connected component of the critical graph is a saddle
connection that either consists entirely of red edges or half-edges of $S(\tau)$
or connects two conical singularities of angle $\pi$. These saddle connections
are in bijection with the horizontal cylinders in $S^*(\tau)$.

\begin{figure}[!ht]
\centering
\begin{subfigure}{.55\linewidth}
\includegraphics{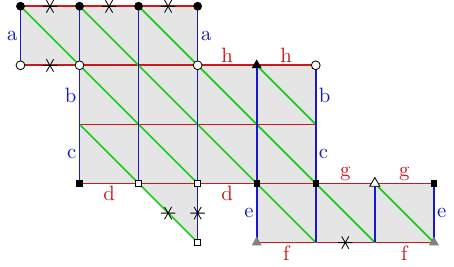}
\caption{A spherical square-tiled surface in $\ST_{quad}([1^3, 2^6, 3^2, 4^2], 7)$.}
\label{sfig:stableTreeExample1}
\end{subfigure}
\hfill
\begin{subfigure}{.4\linewidth}
\includegraphics{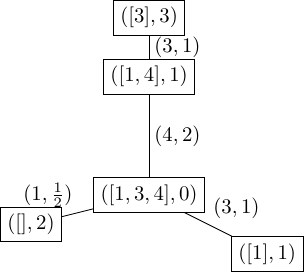}
\caption{The weighted stable graph of \cref{sfig:stableTreeExample1}.}
\label{sfig:stableTreeExample2}
\end{subfigure}
\caption{A square-tiled surface $S(\tau)$ and its weighted stable graphs $\Gamma(\tau)$.}
\end{figure}

The \emph{weighted stable graph} associated to $\tau$ is the multigraph
$\Gamma(\tau)$ together with vertex decorations $(\mu^{(v)}, k^{(v)})$ for each
vertex $v \in V(\Gamma(\tau))$ and edge decorations $(w_e, h_e)$ for each edge
$e \in E(\Gamma(\tau))$ built as follows.

\begin{itemize}
\item $V(\tau)$ is the set of connected components of the critical graph of
  $S(\tau)$; the vertex decoration $(\mu^{(v)}, k^{(v)})$ associated to a
  component records the singularity pattern of that component ignoring regular
  vertices (corresponding to $\mu_2$),
\item $E(\tau)$ is the set of cylinders, i.e. the connected components of
  $S(\tau)$ minus the critical graph. The two ends of an edge are the two
  connected components of the critical graph to which the boundaries of
  cylinder are glued to; the edge decoration
  $(w_e, h_e) \in \bZ_{> 0} \times \frac{1}{2} \bZ_{> 0}$ records the width and
  height of the cylinder. Note that this allows loops.
\end{itemize}
By construction
\begin{itemize}
\item the union of the vertex decorations $(\mu^{(v)}, k^{(v)})$ is the
  \emph{reduced profile} of $\tau$, that is the profile $\mu$ of $\tau$ where
  the singularities corresponding to $\mu_2$ are omitted.
\item the number of triangles $n = \sum_{i \ge 0} \mu_i$ of $S(\tau)$ satisfies
  $\sum_{e \in E(\Gamma)} w_e \cdot h_e = \frac{n}{2}$.
\end{itemize}

Let us terminate this subsection with three remarks. First, we note that
one can read the genus of $S(\tau)$ from $\Gamma(\tau)$. Namely, to each vertex
$v \in V(\Gamma)$ one can associate the genus of the corresponding singular
layer $g^{(v)}$ which is obtained from the vertex decoration and the degree of
$v$ in $\Gamma(\tau)$ as
\[
4 g^{(v)} - 4 = \sum_i \mu^{(v)}_i \cdot (i - 2) - k^{(v)} - 2 \deg(v).
\]
Then the genus of the surface $S(\tau)$ is
$\sum_{v \in \Gamma(\tau)} g^{(v)} + |E(\Gamma(\tau))| - |V(\Gamma(\tau))| +
1$. In particular, for a spherical square-tiled surfaces, $\Gamma(\tau)$ is
always a tree.

Secondly, if $S(\tau)$ and $S(\tau')$ are two square-tiled surfaces that differ
by some horizontal shears then their weighted stable graphs $\Gamma(\tau)$ and
$\Gamma(\tau')$ are isomorphic.

Finally, if $S(\tau)$ is a square-tiled surface and $c_1$ and $c_2$ are two
\GB-components that correspond to the same edge in $\Gamma(\tau)$ then the
associated cylinder shears produce isomorphic square-tiled surfaces (see
\cref{fig:neighborComponents}).
\begin{figure}[h]
\begin{subfigure}[t]{.48\linewidth}
\includegraphics{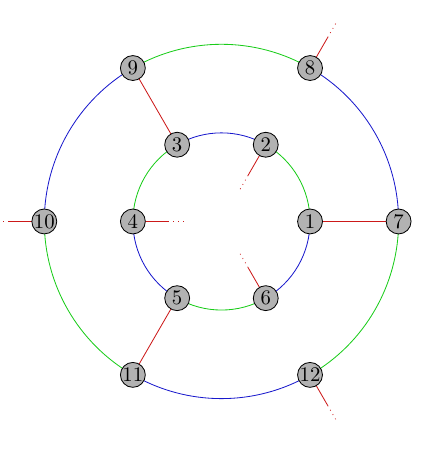}
\caption{Two \GB-cycles of length 6 (separated by three hexagons) whose shearing
  produces isomorphic square-tiled surfaces.}
\end{subfigure}
\hfill
\begin{subfigure}[t]{.48\linewidth}
\includegraphics{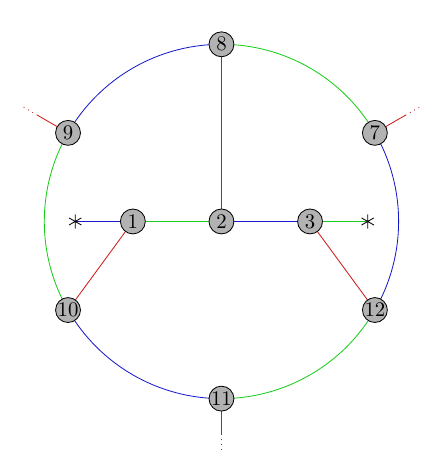}
\caption{A \GB-path of length 3 and a \GB-cycle of length 6 whose
  shearing produces isomorphic square-tiled surfaces.}
\end{subfigure}
\caption{Equivalence of cylinder shears.}
\label{fig:neighborComponents}
\end{figure}

\section{Connecting to path-like configurations}
\label{sec:connectingtoPathLike}
In this section, we focus on the square-tiled surfaces of genus 0, also called
\emph{spherical square-tiled surfaces}. These square-tiled surfaces are exactly
those whose profile is \emph{spherical}, i.e. satisfies
$\sum_i (i-2)\mu_i = k-4$. We will assume that it is the case in this whole
section.

\subsection{Path-like configurations}\label{ssec:pathLike}

A square-tiled surface is called a \emph{path-like configuration} if it has a
single horizontal cylinder (that is, a \GB-component) which furthermore is a
path.  The path-like square-tiled surfaces correspond to some specific
Jenkins-Strebel differentials appearing in~\cite{Zorich2008} and to the
one-cylinder square-tiled surfaces studied
in~\cite{DelecroixGoujardZografZorich2020}.

Note that if $(\mu, k)$ is the profile of a path-like configuration then
necessarily $k \ge 2$ because of the two ends of the path. One important result
of this subsection is that this condition is the only restriction on the profile
for the existence of path-like square-tiled surfaces.
\begin{theorem}\label{thm:pathLikeExistence}
 Let $(\mu, k)$ be a spherical profile, with $k \ge 2$. Then there exists a
 path-like configuration with profile $(\mu, k)$.
\end{theorem}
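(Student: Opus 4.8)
The plan is to build a path-like configuration directly as a triple of involutions $(\tau_R,\tau_G,\tau_B)$ on $[n]$, where $n=\sum_i i\mu_i$. Requiring a single \GB-component that is a \emph{path} forces $\tau_G$ and $\tau_B$ to be (up to relabelling) the two alternating matchings along the line $1-2-\cdots-n$, with a green or blue half-edge at each of the two extremities. A short computation then shows that their product $\sigma:=\tau_G\tau_B$ is a single $n$-cycle (this is precisely where ``path'' rather than ``cycle'' is used), and that $\langle\tau_G,\tau_B\rangle$ already acts transitively. Since the faces of $S^*(\tau)$ are the cycles of $\tau_R\tau_G\tau_B=\tau_R\sigma$, the whole problem reduces to exhibiting a single involution $\tau_R$ such that $\tau_R\sigma$ has cycle type $\mu$; transitivity is then automatic, the two endpoint half-edges already contribute $2$ to $k$, so the fixed points of $\tau_R$ (the red half-edges) must number exactly $k-2$.

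Write $m=\sum_i\mu_i$ for the number of faces and $\ell_1,\dots,\ell_m$ for the parts of $\mu$. The spherical identity $\sum_i(i-2)\mu_i=k-4$ is equivalent to $n-2(m-1)=k-2$, so an involution on $[n]$ with $k-2$ fixed points has exactly $m-1$ transpositions; thus $\tau_R$ must be a product of $m-1$ disjoint transpositions. Applying the permutation genus formula to the triple $(\tau_R,\sigma,(\tau_R\sigma)^{-1})$ shows that such a factorization is automatically planar (genus $0$), matching the sphere, and the hypothesis $k\ge 2$ translates exactly into $2(m-1)\le n$, i.e. that $m-1$ disjoint transpositions fit into $[n]$. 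This confirms that $k\ge 2$ is the only obstruction and pins down what $\tau_R$ must look like.

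I would realise $\tau_R$ incrementally, starting from $\tau_R=\mathrm{id}$ (all red slots are half-edges), which is the single face of degree $n$ of Cassaigne--Ferenczi--Zamboni, and then adding $m-1$ red edges one at a time, each pairing two red half-edges and so splitting one face of degree $\ell$ into faces of degrees $a$ and $\ell-a$. In the starting configuration the marked points are cyclically ordered $1,\dots,n$ along the unique face, so any split is available, and recursively inside each subface as well. The combinatorial heart is then a counting statement: the faces together with the $m-1$ red edges form a \emph{plane tree}, the node carrying the face of degree $\ell_j$ having tree-degree $d_j$ equal to its number of incident red edges, whence $\sum_j d_j=2(m-1)$ and necessarily $d_j\le\ell_j$ (a face cannot carry more red-edge endpoints than marked points). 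Such a tree, with prescribed degrees $1\le d_j\le \ell_j$ summing to $2(m-1)$, exists precisely when $2(m-1)\le n$ (for $m\ge 2$; the case $m=1$ is the single-face configuration above), the available slack being $\sum_j(\ell_j-1)=n-m\ge m-2$.

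The main obstacle is to upgrade this degree-feasibility into an actual \emph{sequence} of splits: one must order the $m-1$ edge-additions so that each split is carried out inside a face that still has two unused half-edge marked points, so that the final cycle lengths are exactly $\ell_1,\dots,\ell_m$, and so that the red chords stay non-crossing and the surface remains of genus $0$. I expect to handle this by rooting the plane tree and adding its edges from the leaves inward --- equivalently peeling off one part at a time, creating the degree-$1$ faces (poles) by adjacent transpositions --- and checking that the global half-edge budget $n-2(m-1)=k-2\ge 0$ prevents any deadlock. Finally I would translate the resulting $(\tau_R,\tau_G,\tau_B)$ back into $S^*(\tau)$ and verify directly that it is connected, has a single \GB-component which is a path, and has profile $(\mu,k)$, completing the proof.
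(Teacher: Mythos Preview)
Your approach is correct and coincides with the paper's: both reduce to the existence of a plane tree with prescribed vertex degrees (the paper's \cref{lem:existenceDecoratedPlaneTree}), and the paper packages this tree as a \emph{decorated plane tree} --- your tree with the $\ell_j-d_j$ unused red slots attached as extra leaves --- before proving the stronger surjectivity statement of \cref{thm:dualTree}, which is reused throughout \cref{ssec:reconfigurationPathLike}. Your ``main obstacle'' dissolves once you drop the incremental framing: given the plane tree, $\tau_R$ can be read off in one shot as the non-crossing arch configuration dual to it (see \cref{fig:blueTreeAndDual,fig:matchingDisks}), so there is no need to order the splits or worry about deadlocks.
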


In order to prove \cref{thm:pathLikeExistence} we construct a surjective map
between path-like configurations and certain decorated plane trees, that we
define below. This map will be used in \cref{ssec:reconfigurationPathLike} to
reconfigure path-like square-tiled surfaces.

%% \begin{definition}
A \emph{decorated plane tree} is a pair $(T, L)$ where $T$ is a tree embedded in
the sphere and $L$ is a subset of the leaves of $T$. We say that $(T, L)$ has
\emph{profile} $(\mu, \ell)$ if $L$ has size $\ell$ and the vertices of $T$ not
in $L$ have degrees given by the integer partition $\mu$. We will call
\emph{$L$-leaves} the leaves in $L$ and \emph{$\overline{L}$-leaves} the other
leaves. The \emph{perimeter} of a decorated plane tree is its number of edges
incident to an $L$-leaf plus twice the number of other edges.
%% \end{definition}

Let $\tau$ be a path-like square-tiled surface. We explain how to associate to
$\tau$ its decorated plane tree. The vertex set of $T$ contains the vertices of
the surface $S(\tau)$, together with the set $L$ containing one vertex for each
half-edge of $S(\tau)$. To obtain $T$, it suffices to keep only the red edges of
$S(\tau)$, and to replace the red half-edges by an edge connecting its endpoint
to a $L$-leaf. Note that $(T,L)$ can also be defined by taking the dual of the
red edges in the cubic graph $S^*(\tau)$. The main result of this subsection and
argument in the proof of \cref{thm:pathLikeExistence} is the following
relationship between path-like configurations and decorated plane trees.
\begin{theorem}\label{thm:dualTree}
  Let $(\mu, k)$ be a spherical profile with $k \ge 2$. Then the map which takes
  a path-like configuration with profile $(\mu, k)$ to its decorated plane tree
  is a surjection onto decorated plane trees with profile $(\mu,
  k-2)$. Furthermore, for each decorated plane tree $(T,L)$ with profile
  $(\mu,k-2)$ the corresponding preimage of path-like configurations is made of
  a single orbit under the horizontal shears.
\end{theorem}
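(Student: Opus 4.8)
The plan is to analyse the bijection through the single horizontal cylinder and the contour of its boundary. First I would check that the map is well defined, i.e.\ that the decorated plane tree of a path-like configuration really has profile $(\mu,k-2)$. Since a path-like configuration has a single \GB-component which is a path, its two extremities carry the only two green-or-blue half-edges, so that the remaining $k-2$ half-edges are all red; these red half-edges are exactly what becomes the $L$-leaves, whence $|L|=k-2$. A cone point dual to an $i$-face of $S^*(\tau)$ is incident to exactly $i$ red edges, so the vertices of $T$ outside $L$ have degrees prescribed by $\mu$. Finally, $T$ is connected because $S(\tau)$ is, and writing $p=\sum_i \mu_i$, a direct count of vertices ($|V(T)|=p+(k-2)$) against edges ($|E(T)|=\frac12(n-(k-2))+(k-2)=\frac{n+k-2}{2}$) together with the spherical relation $n=2p+k-4$ yields $|E(T)|=p+k-3=|V(T)|-1$. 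Hence $T$ is a tree, and it is planar by restricting the sphere embedding of $S^*(\tau)$.

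For surjectivity I would reconstruct a path-like configuration from an arbitrary decorated plane tree $(T,L)$ of profile $(\mu,k-2)$ by walking along its contour. The perimeter equals $n=\sum_i i\mu_i$: internal edges are traversed twice and each edge to an $L$-leaf once. This contour walk produces a cyclic word of length $n$ whose letters are the $n$ triangles of the surface to be built; consecutive letters get glued along green and blue sides alternately, while the two traversals of an internal edge, resp.\ the single traversal of an $L$-edge, prescribe $\tau_R$ (a red edge, resp.\ a red half-edge). Reading the contour as the lower boundary of a height-$\frac12$ cylinder whose upper boundary is folded onto itself recovers $\tau_G$ and $\tau_B$; the two fixed points of the folding involution are precisely the two green/blue half-edges at the ends of the \GB-path. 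I would then verify that the resulting triple of involutions acts transitively (connectedness of $T$, giving a single contour hence a single cylinder) and that its unique \GB-component is a path, so the output is a genuine path-like configuration mapping to $(T,L)$.

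For the ``furthermore'' part the key observation is that a horizontal shear $\Shear_{c,G,B}$ leaves $\tau_R$ untouched, hence preserves the whole red structure; geometrically it is a Dehn twist inside the horizontal cylinder supported away from the horizontal critical graph, so it preserves not just the abstract tree but also its planar embedding, i.e.\ the entire decorated plane tree. Thus every fibre is a union of horizontal-shear orbits. To see that it is a \emph{single} orbit I would show that two path-like configurations sharing $(T,L)$ differ only by the offset of the top-fold gluing along the contour of length $n$, a cyclic parameter, and that one application of $\Shear_{c,G,B}$ — whose effect on the cylinder is the rotation $c_G\,c_B$ — shifts this offset by one step. Powers of the shear then exhaust the finite set of offsets, giving transitivity on the fibre.

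The main obstacle is this last transitivity statement. Well-definedness is Euler-characteristic bookkeeping and surjectivity is a direct contour construction, but identifying the fibre with a single cyclic twist parameter, checking that the shear acts on it as a generator of the corresponding cyclic group so that its powers reach every configuration, and correctly discounting the finitely many symmetries of $(T,L)$ that could a priori split the fibre, is where the real content lies.
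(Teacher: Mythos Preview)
Your plan is essentially the paper's, and the first two parts are fine. For well-definedness the paper argues topologically: the single \GB-path is non-separating, so its complement in the sphere is a disk and $T$ is the tree retract of that disk. Your Euler-count alternative works too, though ``$T$ is connected because $S(\tau)$ is'' needs one more sentence (red edges alone need not connect the vertices of $S(\tau)$; what does the work is that the unique \GB-path is non-separating, which is the paper's point). For surjectivity your contour walk is exactly the paper's arch-configuration-in-a-disk glued to a \GB-path-in-a-disk, read from the tree side rather than the dual side; the $n$ cyclic choices of gluing are your $n$ offsets.

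On transitivity you correctly flag the crux, but the assertion ``one application of $\Shear_{c,G,B}$ shifts this offset by one step'' is not right, and the paper supplies exactly the missing structure. A single $(G,B)$-shear swaps the colours $G$ and $B$ along the path, in particular at its two end half-edges, so one must split on the parity of the perimeter $n$. For $n$ odd the two ends carry distinct colours; the paper marks the site adjacent to the green end and shows that \emph{two} $(G,B)$-shears advance this mark by one site around the contour, which already gives a transitive action on the $n$ preimages. For $n$ even both ends carry the same colour; one shear toggles between green-ended and blue-ended configurations, while two shears shift the antipodal pair of marks by one site, giving $n/2$ configurations of each kind and again a single orbit of size $n$. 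Finally, the automorphisms of $(T,L)$ you worry about never split the fibre into several orbits: they only identify some of the $n$ gluings, reducing the fibre's cardinality to $n/|\Aut(T,L)|$, so that concern can be dropped.
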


\begin{proof}
  Let $\tau$ be a path-like configuration with profile $(\mu, k)$. We first
  prove that its image is indeed a decorated plane tree. The only thing to prove
  is that $T$ is indeed a tree. As $\tau$ is spherical and $S(\tau)$ has only
  one \GB-component that is a path, hence not separating, the complement of $T$
  in the sphere is indeed a disk.

  To obtain the surjectivity, we now explain how to build the tricoloured cubic
  graph dual to a path-like configuration that maps to a given pair $(T, L)$
  with profile $(\mu, k-2)$. We first consider the arch configuration dual to
  $(T, L)$ on a disk as in \cref{fig:blueTreeAndDual}. Here by \emph{arch
    configuration} we mean a choice of a non-intersecting matching (or
  involution) on cyclically ordered points.  An arch configuration has a
  geometric realization as a union of arcs of circles (for each pair of the
  matching) and segments with one end on the boundary (for points not in the
  matching). Then in another disk, we consider a \GB-path on $n$ vertices and
  add red segments going to the boundary as the red picture in
  \cref{fig:blueTreeAndDual} where $n$ is the perimeter of $(T,L)$. Finally we
  obtain a spherical square-tiled surface by gluing together the two disks as in
  \cref{fig:matchingDisks}. Note that there are $n$ possible choices of gluings
  and each of them gives a square-tiled surface with the same dual decorated
  plane tree $(T,L)$.

  We now prove that the set of square-tiled surfaces with decorated plane tree
  $(T, L)$ forms a single orbit under horizontal shears. Let $S^*(\tau)$ be a
  tricoloured cubic graph dual to a given $\tau$ with decorated plane tree
  $(T,L)$. We will mark $(T,L)$ to track the position of the \GB-path of
  $S^*(\tau)$ compared to the red edges. We associate a \emph{marking site} to
  each endpoint of the red edges of $S^*(\tau)$ (each half-edge receives only
  one marking site instead of two). In $(T,L)$, this corresponds to placing a
  marking site on each edge of $T$ adjacent to an $L$-leaf and one on each side
  of the other edges. We now need to distinguish two cases depending on the
  parity of the perimeter of $(T,L)$, that is the number of marking sites in
  $(T,L)$. Note that $(T,L)$ has an even perimeter if and only if the half-edges
  at the endpoint of the \GB-path of $S^*(\tau)$ (and all other preimages of
  $(T,L)$) have identical colours.

  Assume that they have distinct colours, this corresponds to the case where
  $(T,L)$ has odd perimeter. Mark the marking site adjacent to the unique green
  half-edge of the \GB-path of $S^*(\tau)$. The marking sites of $(T,L)$ appear
  in a cyclic ordering on the boundary of the complement of $T$. Notice that
  performing two $(G,B)$-shears along the unique \GB-path of $S^*(\tau)$ moves
  the mark to the next marking site in the anti-clockwise order. This proves
  that the preimages of $(T,L)$ form a single orbit under the horizontal shears
  and that the number of such preimages is the perimeter of $(T,L)$.

  Assume now that the perimeter is even, this corresponds to the case where the
  half-edges at the endpoints of the \GB-path of $S^*(\tau)$ have identical
  colours. Up to performing one $(G,B)$-shear (which has no effect on $(T,L)$),
  we can assume that both are green. We place two marks on $(T,L)$, one next to
  each half-edge of the \GB-path of $S^*(\tau)$. These marks are
  \emph{antipodal}: the number of marking sites between them on each side of the
  boundary of the complement of $T$ is half the perimeter of $(T,L)$. Like
  before, performing two $(G,B)$-shears along the unique \GB-path of $S^*(\tau)$
  moves both marks to the next marking site in the anti-clockwise order. Since
  all the markings corresponding to a preimage with two green half-edges are
  antipodal, this proves that these the preimages are in a single orbit under
  the horizontal shears. Moreover, the number of such preimages is the half
  perimeter of $(T,L)$. The same is true for preimages with two blue half-edges,
  and one alternates between one case and the other with each $(G,B)$-shear.
\end{proof}
\begin{figure}[!ht]
\begin{center}
\includegraphics{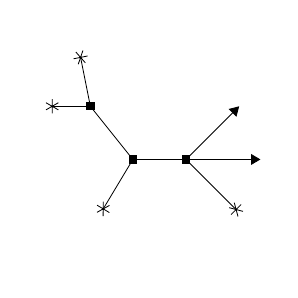}
\includegraphics{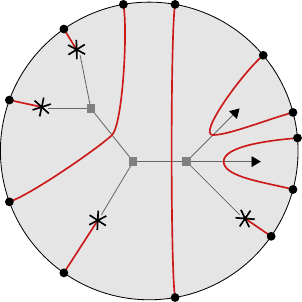}
\hspace{.4cm}
\includegraphics{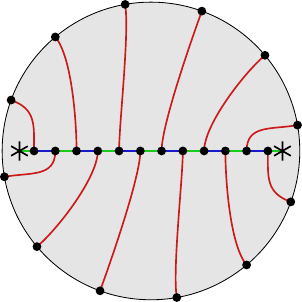}
\end{center}
\caption{A decorated plane tree $(T, L)$ with profile $([4,3^2,1^2],4)$, its
  dual arch configuration and a \GB path in a disk with same diameter. When
  drawing a decorated plane tree, we will represent the $L$-leaves by a star,
  and the other leaves by triangular nodes.}
\label{fig:blueTreeAndDual}
\end{figure}

\begin{figure}[!ht]
\begin{center}
\includegraphics{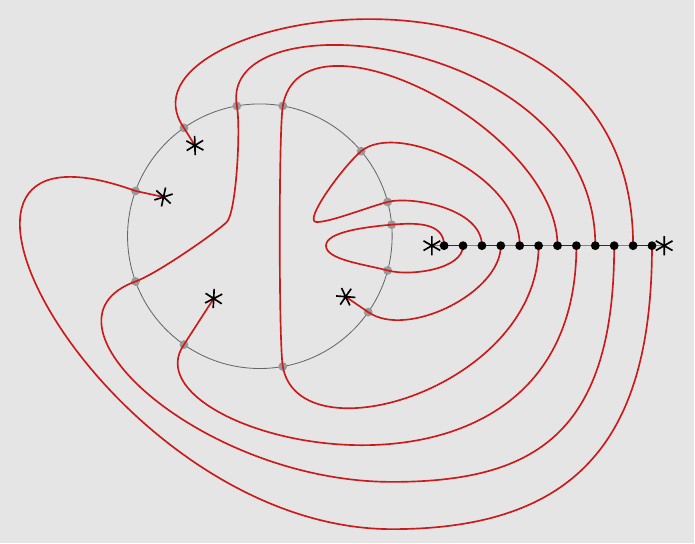}
\end{center}
\caption{Matching the arch configuration and the \GB-path of
  \cref{fig:blueTreeAndDual} to reconstruct a tricolored cubic graph as in the
  proof of \cref{thm:dualTree}.}
\label{fig:matchingDisks}
\end{figure}

\cref{thm:pathLikeExistence} directly follows from
\cref{thm:dualTree} and the following lemma.
\begin{lemma}
\label{lem:existenceDecoratedPlaneTree}
Let $(\mu, k)$ be a spherical profile with $k \ge 2$.
Then there exists a decorated plane tree with profile $(\mu, k-2)$.
\end{lemma}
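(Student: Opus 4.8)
The plan is to realise the desired object as an abstract tree with a prescribed degree sequence, then embed it in the sphere and designate the $L$-leaves. Write $\ell = k-2$. A decorated plane tree with profile $(\mu,\ell)$ is a tree $T$ whose vertices split into $\ell$ vertices forming $L$, each a leaf (hence of degree $1$), and $\sum_{i\ge 1}\mu_i$ further vertices whose degrees are prescribed by $\mu$. Such a $T$ therefore has $V = \sum_{i\ge 1}\mu_i + \ell$ vertices and total degree $\sum_{i\ge 1} i\,\mu_i + \ell$.

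First I would check that the handshake constraint for trees is equivalent to the sphericity hypothesis. A tree on $V\ge 1$ vertices has $V-1$ edges, so its degrees sum to $2(V-1)$. Imposing $\sum_{i\ge1} i\,\mu_i + \ell = 2\big(\sum_{i\ge1}\mu_i + \ell - 1\big)$ and substituting $\ell = k-2$ rearranges exactly to $\sum_{i\ge 1}(i-2)\mu_i = k-4$, which is the spherical profile condition. The same computation, rewritten as $\sum_{i\ge1}\mu_i + k = 4 + \sum_{i\ge 1}(i-1)\mu_i$, shows that $V = \sum_{i\ge1}\mu_i + k - 2 \ge 2$, since every summand on the right is non-negative; in particular the degenerate single-vertex case never arises.

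Next I would invoke the classical fact that a finite multiset of positive integers $d_1,\dots,d_V$ with $V\ge 2$ is the degree sequence of some tree if and only if $\sum_j d_j = 2(V-1)$ (sufficiency follows by repeatedly attaching a leaf to a vertex of maximal remaining degree). Applying this to the multiset consisting of $\mu_i$ copies of $i$ for each $i\ge 2$ together with $\mu_1 + \ell$ copies of $1$ --- whose total is $2(V-1)$ by the previous paragraph --- produces an abstract tree $T$ with exactly this degree sequence. I then choose any $\ell$ of its $\mu_1+\ell$ degree-$1$ vertices to form $L$; the remaining vertices have degrees distributed according to $\mu$, so $(T,L)$ has profile $(\mu,\ell)$. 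Finally, endowing each vertex of $T$ with an arbitrary cyclic ordering of its incident edges embeds $T$ in the sphere, turning it into a plane tree and completing the construction.

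The construction itself is routine; the only points that require care are the bookkeeping ones. The key verification is that the sphericity hypothesis $\sum_{i\ge1}(i-2)\mu_i = k-4$ coincides with the tree handshake identity, so that the prescribed degrees are realisable at all. One must also ensure there are enough degree-$1$ vertices to supply the $\ell = k-2$ required $L$-leaves, but this is automatic because we build these leaves into the degree multiset; and the bound $V\ge 2$ guarantees that the realizability criterion applies. Thus no genuine obstacle arises beyond confirming these numerical compatibilities.
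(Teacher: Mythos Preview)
Your proof is correct and follows essentially the same approach as the paper: both reduce the problem to the classical realizability criterion for tree degree sequences, verify that the spherical profile condition is exactly the handshake identity $\sum d_j = 2(V-1)$, and then finish by choosing a planar embedding and a subset of $k-2$ leaves for $L$. Your extra remark that $V\ge 2$ is a harmless refinement the paper leaves implicit.
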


\begin{proof}
  It is a standard result that a tree on $n$ vertices with a given degree
  sequence $d_1, d_2, \ldots, d_n$ exists if and only if
  $d_1 + d_2 + \ldots + d_n = 2(n-1)$. Now for a tree with profile $(\mu, k-2)$
  the number of vertices is $n=\sum \mu_i + k - 2$ and the degree sequence is
  $d = [1^{\mu_1+k-2},2^{\mu_2},\ldots]$. The planarity condition can be
  rewritten as
  $$
  \sum_{i=1}^n d_i = k - 2 + \sum i \mu_i = 2 \sum \mu_i + 2k - 6 = 2(n-1).
  $$
  Thus there exists a tree with degree sequence $d$. In order to build a
  decorated plane tree, one just needs to pick any planar embedding and choose a
  subset of $k-2$ leaves.
\end{proof}

Note that the proof of \cref{thm:dualTree} also provides a way of counting the
number of path-like configurations in a stratum. For each decorated plane tree
$(T,L)$ in the stratum, the number of preimages, up to automorphism, is equal to
the perimeter of $(T,L)$ divided by its number of automorphisms.

Finally, the following proposition is an immediate consequence of the existence
of path-like configurations and implies the lower bound in
\cref{thm:connectednessSphere}.
\begin{proposition}\label{prop:squareTiledLowerBound}
  Let $(\mu, k)$ be a spherical profile, with $k \ge 2$. Then there are
  square-tiled surfaces with profile $(\mu, k)$ that are separated by $\Omega(k)$
  cylinder shears.
\end{proposition}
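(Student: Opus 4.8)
The goal is to exhibit square-tiled surfaces with a fixed spherical profile $(\mu,k)$ that require $\Omega(k)$ cylinder shears to connect. My plan is to leverage the existence of path-like configurations from \cref{thm:pathLikeExistence} together with the fact that cylinder shears interact with the cylinder structure in a controlled way, as recorded by the weighted stable graph $\Gamma(\tau)$.

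\medskip
First I would construct a path-like configuration $\tau$ with profile $(\mu,k)$, which exists by \cref{thm:pathLikeExistence}. Such a surface has exactly one horizontal cylinder ($\Gamma(\tau)$ is a tree with a single edge, a loop-free structure). The key observation is that a single horizontal shear does not change the weighted stable graph $\Gamma$ (as remarked at the end of \cref{sssec:weightedStableGraph}), while a single vertical shear can only modify the cylinder structure in a bounded way. More precisely, each shear (horizontal or vertical) can increase or decrease the number of horizontal cylinders, i.e.\ the number of edges $|E(\Gamma)|$, by at most a constant amount. The plan is to compare $\tau$ against a square-tiled surface $\tau'$ of the same profile whose weighted stable graph has the maximum possible number of horizontal cylinders, so that the distance in $|E(\Gamma)|$ between them is $\Omega(k)$.

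\medskip
The crucial quantitative input is that the number of horizontal cylinders is bounded above in terms of the profile: since each cylinder contributes a saddle connection to the critical graph and each such saddle connection uses singularities of the surface, and since the total number of singularities of angle $\pi$ (corresponding to half-edges and degree-$1$ vertices, with $\kappa_{-1} = \mu_1 + k$) is governed by $k$, a surface realizing the spherical profile can have up to $\Theta(k)$ horizontal cylinders. Concretely, one takes $\tau'$ to be a square-tiled surface whose horizontal cylinders are all of height one and minimal width, so that $|E(\Gamma(\tau'))| = \Theta(k)$, while the path-like $\tau$ has $|E(\Gamma(\tau))| = 1$. Establishing that each cylinder shear changes $|E(\Gamma)|$ by $O(1)$ then forces any connecting sequence to have length $\Omega(k)$.

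\medskip
The hard part will be proving that a single cylinder shear changes $|E(\Gamma)|$ by only a bounded amount, and indeed that it changes it by at most a constant: a horizontal shear fixes $\Gamma$ entirely, but a vertical shear rearranges the \RG-components and can split or merge horizontal cylinders. I would argue this by tracking how a vertical shear along a single \RG-component $c$ affects the \GB-components: since $c$ is one orbit of $\langle \tau_R, \tau_G\rangle$ and the shear only alters $\tau_R,\tau_G$ (leaving $\tau_B$ fixed) in a way localized to $c$, the partition of $[n]$ into \GB-orbits can change only at the boundary where $c$ meets the blue edges, bounding the change in the number of cylinders by a constant independent of $k$. Once this local control is in hand, the lower bound follows immediately by choosing the two extremal surfaces described above and invoking \cref{prop:connectedComponentsAndConnectedComponents} (or directly the profile-invariance of \cref{lem:profileIsInvariant}) to ensure they lie in the same stratum and hence are legitimately comparable.
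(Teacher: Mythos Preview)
Your approach has a genuine gap: the key claim that a single cylinder shear changes $|E(\Gamma(\tau))|$ by at most a constant is false. A vertical shear along an \RG-component $c$ alters $\tau_G$ on every vertex of $c$, and $c$ can be arbitrarily long; consequently it can touch arbitrarily many \GB-components. The paper's own \cref{lem:fusionPath} and \cref{lem:fusionPathDualGraph} exhibit exactly this phenomenon: a single vertical shear along a fusion path that crosses $m$ horizontal cylinders merges them all into one, decreasing $|E(\Gamma)|$ by $m-1$ with $m$ unbounded. Your heuristic that ``the partition into \GB-orbits can change only at the boundary where $c$ meets the blue edges'' overlooks that \emph{every} vertex of $c$ is incident to a blue edge, so this ``boundary'' has size $|c|$, not $O(1)$.

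The paper sidesteps this by tracking a simpler, genuinely local invariant: the number of \emph{red} half-edges. A horizontal (\GB) shear leaves $\tau_R$ untouched; a vertical (\RG) shear only swaps the colours of the red and green half-edges inside the component $c$, and since $c$ is a path or cycle it carries at most two half-edges in total, so the red-half-edge count changes by at most $2$. A horizontal path-like surface has $k-2$ red half-edges (the two ends of the \GB-path are green or blue), while by colour symmetry there is a ``vertical path-like'' surface of the same profile with at most $2$ red half-edges. The $\Omega(k)$ lower bound follows immediately.
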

\begin{proof}
  A shear can only change by at most two the number of half-edges coloured red
  in $S^*(\tau)$. By \cref{thm:pathLikeExistence}, there is a path-like square
  tiled-surface $\tau$ of profile $(\mu,k)$ and by symmetry, there is also a
  square tiled-surface $\tau'$ of profile $(\mu,k)$ that has a single vertical
  cylinder which furthermore is a path. Since $\tau$ and $\tau'$ have
  respectively $k-2$ and at most two red half-edges, this concludes the proof.
\end{proof}

\subsection{Connecting to a path-like configuration}
\label{sssec:connectingToPathLike}
We now state and prove a general result about the reduction to path-like
configurations of spherical square-tiled surface (see \cref{lem:movesToPathLike}
below). Let us emphasise that it requires an assumption on the profile that is
not satisfied in general by spherical square-tiled surfaces, namely
$\mu_1 \le 1$. It is a partial step towards \cref{thm:connectednessSphere}.

\begin{lemma} \label{lem:movesToPathLike} Let $\tau$ be a spherical square-tiled
  surface with profile $(\mu,k)$, such that $\mu_1 \le 1$. Then $\tau$ can be
  connected to a path-like square-tiled surface $\tau'$ with a sequence of
  $O(k)$ vertical cylinder shears and powers of horizontal cylinder shears.
\end{lemma}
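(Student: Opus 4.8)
The plan is to induct on the number of horizontal cylinders of $\tau$, using the weighted stable graph $\Gamma(\tau)$ of \cref{sssec:weightedStableGraph} as the main bookkeeping device. Since $\tau$ is spherical, $\Gamma(\tau)$ is a tree whose edges are exactly the \GB-components of $S^*(\tau)$ and whose vertices are the singular layers; a path-like configuration is precisely a square-tiled surface whose stable graph is a single edge carrying a \GB-path, so the goal is to contract $\Gamma(\tau)$ down to one such edge. First I would bound the number of edges: each singular layer contains at least one singularity, so $V(\Gamma(\tau))$ injects into the singularities of $S(\tau)$, and \eqref{eq:squareTiledSurfaceEulerCharacteristic} together with $\mu_1 \le 1$ gives $\sum_{i \ge 3}(i-2)\mu_i = k - 4 + \mu_1$, whence the number of singular vertices is $\sum_{i\ge 3}\mu_i + \mu_1 \le k - 3 + 1$ and, adding the $k$ half-edges, $|V(\Gamma(\tau))| = O(k)$. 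As $\Gamma(\tau)$ is a tree, $|E(\Gamma(\tau))| = O(k)$. It therefore suffices to exhibit a reduction step removing one edge of $\Gamma(\tau)$ using $O(1)$ vertical shears and $O(1)$ powers of horizontal shears, and to iterate it $O(k)$ times.

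For the reduction step, suppose $\Gamma(\tau)$ has at least two edges and pick a leaf $v$, incident to a unique cylinder $e$ whose other endpoint is a layer $w$ of degree at least two; let $e'$ be a second cylinder incident to $w$. The idea is to fuse $e$ and $e'$ into a single \GB-component by one vertical shear. Recall that $\Shear_{c,R,G}$ along a \RG-component $c$ leaves $\tau_B$ unchanged and replaces $\tau_G$ by $\tau_G \cdot c_R \cdot c_G$; since the horizontal cylinders are the orbits of $\langle \tau_G, \tau_B\rangle$, choosing $c$ to be a vertical cylinder that threads through both $e$ and $e'$ across the layer $w$ reconnects the green edges so that the two orbits merge into one. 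Concretely, I would first apply a power of the horizontal shear on $e$, which does not change $\Gamma(\tau)$ (see the remarks ending \cref{sssec:weightedStableGraph}), to bring a square of $e$ into vertical adjacency with a square of $e'$ across a cone point of $w$, and then apply a single $\Shear_{c,R,G}$ along the vertical cylinder running through that adjacency. By \cref{lem:profileIsInvariant} the profile (hence the genus, hence the tree property of $\Gamma$) is preserved, and by construction the number of \GB-components drops by exactly one.

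After $O(k)$ such steps we reach a square-tiled surface $\tau_1$ with a single horizontal cylinder, so $\Gamma(\tau_1)$ is a single edge. It remains to ensure this unique \GB-component is a path and not a cycle. If it is already a path we are done; otherwise it is a \GB-cycle, so $\tau_{1,G}$ and $\tau_{1,B}$ have no fixed points and all $k$ half-edges are red (note $k \ge 3$, since $\mu_1\le 1$ and sphericity force $k - 4 = -\mu_1 + \sum_{i\ge 3}(i-2)\mu_i \ge -1$). These red half-edges lie on the two boundary layers. Picking a vertical cylinder $c$ through such a red half-edge and applying $\Shear_{c,R,G}$ converts it into a green half-edge, and a second such shear produces a green or blue half-edge at the opposite end, cutting the cycle into a \GB-path with two half-edge ends. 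This costs $O(1)$ additional shears, so the resulting $\tau'$ is path-like with the same profile as $\tau$, reached in $O(k)$ moves.

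The hard part will be the reduction step, specifically proving that a suitable threading vertical cylinder always exists and that a single vertical shear fuses the two targeted horizontal cylinders without splitting others or otherwise disturbing $\Gamma(\tau)$ beyond the intended edge contraction. This is exactly where $\mu_1 \le 1$ enters: a layer consisting of an isolated degree-one singularity (a $1$-face, dual to a cone point of angle $\pi$) offers no passage through which a merging vertical saddle connection can be threaded, and two or more such leaves could simultaneously obstruct every contraction; the hypothesis $\mu_1 \le 1$ rules out this degenerate situation. I would make the merge rigorous by analysing, directly on the involutions, how the chosen \RG-component meets the \GB-components $e$ and $e'$, and checking that $\langle \tau'_G, \tau_B\rangle$ has exactly one fewer orbit than $\langle \tau_G, \tau_B\rangle$ while every other \GB-component and the full vertex decoration of $\Gamma$ are left intact.
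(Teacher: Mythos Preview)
Your overall plan---track the weighted stable tree $\Gamma(\tau)$ and iteratively merge horizontal cylinders by vertical shears until one edge remains---is the right one, and your edge count $|E(\Gamma(\tau))|=O(k)$ is fine. The genuine gap is the reduction step itself, which you yourself flag as ``the hard part'' but do not carry out. Aligning one square of $e$ with one of $e'$ by a horizontal shear and then shearing along \emph{whatever} \RG-component $c$ happens to pass through that adjacency does not do what you want: you have no control over the rest of $c$. In general $c$ may be a \RG-cycle rather than a path, and even if it is a path it can revisit $e$ or $e'$ several times and wander through many other \GB-components; the resulting vertical shear can split horizontal cylinders as well as merge them, so ``the number of \GB-components drops by exactly one'' is not justified. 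Your proposed use of $\mu_1\le 1$ (``two or more degree-one layers could obstruct every contraction'') is too vague to close this.

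The paper supplies exactly the missing mechanism: a \emph{fusion path} is a \RG-component that is a path meeting each \GB-cycle in a single green edge and each \GB-path in a single green half-edge. \cref{lem:fusionPath,lem:fusionPathDualGraph} show that a vertical shear along a fusion path merges precisely the horizontal cylinders it crosses into a single \GB-path (so your separate ``cycle to path'' clean-up is unnecessary), and \cref{lem:fusionPathExistence} shows how to manufacture a fusion path ending at a leaf of $\Gamma(\tau)$ by a bounded number of powers of horizontal shears. The hypothesis $\mu_1\le 1$ enters there, and only there: by an Euler-characteristic count every leaf $v$ of $\Gamma(\tau)$ satisfies $k^{(v)}+\mu_1^{(v)}\ge 2$, so with at most one global $1$-face each leaf carries a red half-edge at which the fusion path can terminate. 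Each step then contracts an entire leaf-to-leaf path in $\Gamma(\tau)$ (not just two adjacent edges), and the total number of horizontal-shear powers is bounded by the number of edges of $\Gamma(\tau)$, giving the $O(k)$ bound.
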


The main technique to transform a spherical square-tiled surface into a
path-like configuration is the fusion that we introduce now.  Let $\tau$ be a
square-tiled surface. A \emph{fusion path} in $\tau$ is a vertical (or \RG)
component of $S^*(\tau)$ that is a path and such that
\begin{itemize}
\item its intersection with each horizontal (or \GB) cycle is either empty or a
  single green edge,
\item its intersection with each horizontal (or \GB) path is either empty or a
  single green half-edge.
\end{itemize}
%% \end{definition}
Note that by definition a fusion path intersects at most two horizontal paths.

\begin{figure}[!ht]
  \begin{subfigure}[t]{.48\textwidth}
    \centering
    \includegraphics{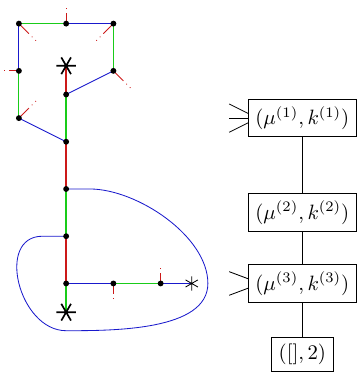}
    \caption{A fusion path that intersects three \GB-components (two
  cycles and a path).}
  \end{subfigure}
\hfill
  \begin{subfigure}[t]{.48\textwidth}
    \centering
    \includegraphics{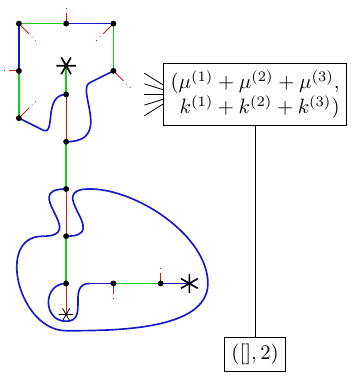}%
    \caption{The tricoloured cubic graph $S^*(\Shear_{c,R,G}(\tau))$.}
    \label{fig:fusionPath}
\end{subfigure}
\caption{A vertical shear on a fusion path results in merging the horizontal
  cylinders it crosses. To the right of each square-tiled surface $\tau$ and
  $\Shear_{c,R,G}(\tau)$ we draw the corresponding part of the stable graphs
  $\Gamma(\tau)$ and $\Gamma(\Shear_{c,R,G}(\tau))$.}
\end{figure}

\begin{lemma} \label{lem:fusionPath} Let $\tau$ be a square-tiled surface with
  $n$ triangles and $c$ be a fusion path in $\tau$. Let $U$ be the subset of
  $[n]$ that consists of the elements which belong to the vertical cylinders of
  $\tau$ which intersect $c$. Then in the square-tiled surfaces
  $\Shear_{c,R,G}(\tau)$ and $\Shear_{c,G,R}(\tau)$ the vertices of $U$ form a
  single \GB-component (see \cref{fig:fusionPath}).
\end{lemma}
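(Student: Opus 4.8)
The plan is to track how the shear modifies the $\{G,B\}$-structure, which is governed by $\langle \tau'_G, \tau_B\rangle$ since the blue involution is unchanged ($\tau'_B = \tau_B$). Write $H_1, \dots, H_m$ for the horizontal cylinders of $\tau$ meeting $c$, so that $U = H_1 \cup \dots \cup H_m$; since every vertex of $c$ lies in some horizontal cylinder that therefore meets $c$, we have $c \subseteq U$. The goal is to show that $U$ is a single orbit of $\langle \tau'_G, \tau_B\rangle$, and I would do so in two moves: first that $U$ is a union of $\{G,B\}$-components of $\tau' = \Shear_{c,R,G}(\tau)$ (invariance), and second that these components are all glued into one (connectivity). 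For invariance, I would check that both generators preserve $U$: the involution $\tau_B$ preserves each $H_i$ and hence $U$; and $\tau'_G = \tau_G$ outside $c$ preserves $U$ there because $\tau_G$ preserves each $H_i$, while on $c$ one has $\tau'_G|_c = (\tau_G \tau_R \tau_G)|_c$, and since $c$ is $\{R,G\}$-invariant with $c \subseteq U$ and $\tau_G$ preserves cylinders, the image again lands in $U$. Thus $U$ decomposes into $\{G,B\}$-components of $\tau'$.

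The heart of the argument is the bridging produced by the red edges of $c$. Using the colour-swap description of the shear, each red edge $(x\,y)$ of $c$ becomes the green edge $(\tau_G(x)\,\tau_G(y))$ of $\tau'$ (and each red half-edge $(x)$ the green half-edge $(\tau_G(x))$); one checks directly that $\tau'_G(\tau_G(x)) = \tau_G \tau_R \tau_G(\tau_G(x)) = \tau_G(y)$. Now $\tau_G(x)$ lies in the same horizontal cylinder as $x$ and $\tau_G(y)$ in the same one as $y$, so if the red edge $(x\,y)$ joins $H_i$ to $H_j$ then this new green edge of $\tau'$ merges $H_i$ and $H_j$ inside the $\{G,B\}$-structure of $\tau'$. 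The green edges of $c$, on the other hand, each lie inside a single cylinder, and the fusion-path hypothesis guarantees that $c$ meets each $H_i$ in exactly one green edge or half-edge. Contracting the green edges of $c$ therefore turns the connected path $c$ into a connected graph on the nodes $H_1, \dots, H_m$ whose edges are precisely the red edges of $c$; hence these bridges connect all the $H_i$ into a single orbit. For $\Shear_{c,G,R}$ the same mechanism applies, with each red edge of $c$ becoming a green edge of $\tau'$ directly since $\tau'_G|_c = \tau_R|_c$.

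It remains to confirm that each $H_i$ stays internally connected in $\tau'$, so that gluing the $H_i$ together really glues all of $U$. The key point — and where the single-green-edge hypothesis is essential — is that the shear alters $\tau_G$ on $H_i$ only at the two vertices $c \cap H_i$; that is, it deletes from $H_i$ exactly the one green edge (or half-edge) $c \cap H_i$ and leaves every other green and blue edge of $H_i$ intact. Deleting a single green edge from a $\{G,B\}$-cycle leaves a spanning path, and deleting a green half-edge from a $\{G,B\}$-path leaves its vertices connected through the untouched blue edges (a cylinder reduced to a single vertex needs no internal connectivity). Combining internal connectivity with the bridges shows that $U$ is a single $\{G,B\}$-component of $\tau'$, as desired. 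The main obstacle I anticipate is the bookkeeping of the degenerate boundary cases — half-edges at the ends of $c$, and path- versus cycle-cylinders — where one must verify that the fusion-path condition really forces exactly one modified green edge per cylinder; the generic interior case is straightforward.
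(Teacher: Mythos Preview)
Your argument is correct and complete: invariance of $U$ under $\langle \tau'_G,\tau_B\rangle$, internal connectivity of each $H_i$ after deleting its unique green edge or half-edge in $c$, and the observation that each red edge of $c$ becomes a green edge of $\tau'$ bridging two distinct $H_i$'s, together force $U$ to be a single $\{G,B\}$-orbit. The paper takes a different, explicitly constructive route: it relabels $c$ as $1,\dots,m$, records each crossed \GB-cycle as an ordered list $H_i$ (and the possible \GB-paths at the two ends as $P_1$, $P_m$), and then writes down the resulting \GB-component of $\tau'$ as the concatenation $P_1\cdot H'_1\cdots H'_s\cdot P_m$. Your abstract approach is cleaner and handles both shear directions symmetrically without any relabelling or case analysis along the path; the paper's explicit description, on the other hand, yields strictly more information --- it identifies the merged component as a \GB-\emph{path} with a specific vertex ordering --- which is what is quoted directly in the proof of the subsequent lemma on the weighted stable graph. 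If you wanted to recover that extra conclusion from your argument, you would note that the two new green half-edges produced at the ends of $c$ (from its two terminal half-edges) sit in $U$, forcing the single \GB-component on $U$ to be a path rather than a cycle.
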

Let us note that \cref{lem:fusionPath} does not require planarity.
\begin{proof}
  Let $\tau$ be a square-tiled surface and $c$ a fusion path.  We prove the case
  $\Shear_{c,R,G}(\tau)$, the case of $\Shear_{c,G,R}(\tau)$ being similar.

  By relabelling $\tau$, we assume that the vertices of $c$ are labelled $1$,
  $2$, \ldots, $m$ in such way that $\tau_R(i)$ and $\tau_G(i)$ belong to
  $\{i-1,i,i+1\}$. In particular, the two half-edges at the endpoints of $c$ are
  adjacent to $1$ and $m$.

  Each horizontal (or \GB) cycle that intersects $c$ does it along an edge with
  vertices $i$ and $\tau_G(i) = i+1$. We let $H$ denote the ordered list of
  vertices on the \GB-cycle starting at $\tau_G(i)$ and ending at $i$. We let
  $H'$ denote the same list where the first and last elements $\tau_G(i)$ and
  $i$ have been exchanged.

  Let $s$ be the number of \GB-cycles intersecting the fusion path $c$ (we have
  $m - 2 \le 2 s \le m$). We label these cycles from $1$ to $s$ according to the
  ordering of the vertices of $c$ induced by our choice of vertex labelling of
  $c$ and denote $H_1$, $H_2$, \ldots, $H_s$ their associated ordered lists.

  If the vertex $1$ in $c$ is adjacent to a green half-edge, then we denote
  $P_1$ the list of vertices of the \GB-path containing $1$ so that it ends with
  $1$.  If not, we let $P_1$ be the empty list.  If the vertex $m$ in $c$ is
  adjacent to a green half-edge, then we denote $P_m$ the list of vertices of
  the \GB-path containing $m$ so that it starts with $m$.  If not, we let $P_m$
  be the empty list.

  The set $U$ is the disjoint union of the elements in $P_1$, $P_m$, $H_1$,
  \ldots, $H_m$.  After a $(R,G)$-shear in $c$, the \GB-path becomes the
  concatenation (in that order) of
  $P_1 \cdot H'_1 \cdot H'_2 \cdots H'_s \cdot P_m$.
\end{proof}

Let us recall that in \cref{sssec:weightedStableGraph}, we introduced the
weighted stable graph $\Gamma(\tau)$ associated to a square-tiled surface
$\tau$. Let us note that $\tau$ is path-like if and only if $\Gamma(\tau)$ is a
graph made of two vertices, one of them having decoration $([], 2)$ and a unique
edge between these two vertices that carries the decoration $(2n, \frac{1}{2})$.

We now reformulate \cref{lem:fusionPath} in terms of the weighted stable graph $\Gamma(\tau)$.
\begin{lemma} \label{lem:fusionPathDualGraph} Let $\tau$ be a spherical
  square-tiled surface and let $c$ be a fusion path in $\tau$. In $S^*(\tau)$,
  each green edge of $c$ belongs to a unique \GB-cycle and each green half-edge
  of $c$ to a unique \GB-path. These \GB-components form a path $c'$ in
  $\Gamma(\tau)$ that starts and ends at vertices with decorations
  $(\mu^{(start)}, k^{(start)})$ and $(\mu^{(end)}, k^{(end)})$ such that
  $k^{(start)} > 0$ and $k^{(end)} > 0$.

  Let $\tau' = \Shear_{c,R,G}(\tau)$. Then $\Gamma(\tau')$ is obtained from
  $\Gamma(\tau)$ by replacing all vertices in $c'$ with two vertices $u$ and $v$
  with decorations $(\mu^{(u)}, k^{(u)}) = ([], 2)$ and
  $(\mu^{(v)}, k^{(v)}) = \sum_{i \in c'} (\mu^{i}, k^{i}) - ([], 2)$ connected
  by a single edge and all neighbours of vertices in $c$ in $\Gamma(\tau)$ get
  plugged to $v$.
\end{lemma}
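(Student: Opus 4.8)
The plan is to translate the combinatorial statement of \cref{lem:fusionPath} into the language of the weighted stable graph $\Gamma(\tau)$, so the bulk of the argument is really a bookkeeping exercise once we know what each $\{G,B\}$-component of $c$ contributes to $\Gamma(\tau)$. First I would verify the preliminary claims: by the definition of a fusion path, each of its green edges lies inside a unique $\{G,B\}$-cycle and each of its green half-edges lies inside a unique $\{G,B\}$-path. Since these $\{G,B\}$-components are exactly the edges of $\Gamma(\tau)$ (with the $\{G,B\}$-paths corresponding to cylinders of height $1/2$ abutting a half-edge saddle connection), the set of components crossed by $c$ defines a sequence of edges and vertices in $\Gamma(\tau)$. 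I would argue that because $c$ is a path that enters and exits each component transversally and meets each component at most once (the defining property of a fusion path), this sequence forms a genuine path $c'$ in $\Gamma(\tau)$, with no repeated vertices. The two endpoints of $c$ are green half-edges, each sitting in a $\{G,B\}$-path, and a $\{G,B\}$-path contributes a critical-graph component carrying at least one half-edge; hence the two endpoint vertices $(\mu^{(start)},k^{(start)})$ and $(\mu^{(end)},k^{(end)})$ of $c'$ satisfy $k^{(start)}>0$ and $k^{(end)}>0$.

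Next I would compute the effect of the shear on $\Gamma(\tau)$. By \cref{lem:fusionPath}, after applying $\Shear_{c,R,G}$ the whole set $U$ of vertices lying in the vertical cylinders meeting $c$ collapses into a single $\{G,B\}$-component, which moreover is a path (it is the concatenation $P_1\cdot H'_1\cdots H'_s\cdot P_m$ exhibited in the proof of \cref{lem:fusionPath}). Since $\tau'$ is again spherical with the same profile by \cref{lem:profileIsInvariant}, and a single $\{G,B\}$-path corresponds in $\Gamma(\tau')$ to an edge joining a trivial ``cap'' vertex with decoration $([],2)$ to the rest of the surface, I would conclude that the subgraph of $\Gamma(\tau)$ spanned by $c'$ is replaced by a new vertex $v$ together with the cap vertex $u$ with $(\mu^{(u)},k^{(u)})=([],2)$ and a single connecting edge. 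The decoration of $v$ then must absorb everything the components of $c'$ carried except for the $([],2)$ that goes to the cap, which is exactly $(\mu^{(v)},k^{(v)})=\sum_{i\in c'}(\mu^{i},k^{i})-([],2)$. Because $\tau_B$ is unchanged by the shear and only the components along $c'$ are affected, every edge of $\Gamma(\tau)$ with an endpoint on $c'$ but not internal to it must be reattached, and the natural place is the non-cap vertex $v$; I would justify this by noting that the cap vertex $u$ has degree one in $\Gamma(\tau')$.

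The main obstacle, and the step I would spend the most care on, is the rigorous identification of the reattachment: showing that all neighbours of the vertices of $c'$ get plugged into $v$ rather than $u$, and that no parallel edges or loops are spuriously created or destroyed. This is precisely where the transversality hypotheses in the definition of a fusion path (each $\{G,B\}$-cycle meeting $c$ in a single green edge, each $\{G,B\}$-path in a single green half-edge) are needed: they guarantee that a cylinder not destroyed by the shear keeps both of its boundary saddle connections intact, so its corresponding edge of $\Gamma(\tau)$ survives with one endpoint redirected to $v$. I would make this explicit by tracking, for each boundary saddle connection of a cylinder incident to $c'$, which critical-graph component of $\tau'$ it now bounds, using the concatenation description from \cref{lem:fusionPath} to see that all the critical graph of the merged region except the two half-edge endpoints becomes part of the single singular layer represented by $v$. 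The remaining verifications (that $u$ genuinely has the decoration $([],2)$, and that the edge $uv$ carries the full merged width) are routine consequences of the width/height bookkeeping already recorded in \cref{sssec:weightedStableGraph}.
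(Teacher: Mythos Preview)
Your approach is essentially the paper's: translate \cref{lem:fusionPath} into the language of $\Gamma(\tau)$. Two points deserve correction, however.

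First, you assert that both endpoints of $c$ are green half-edges. This is false: since $c$ is an $\{R,G\}$-path, each endpoint may carry either a red or a green half-edge (cf.\ the case split in the proof of \cref{lem:fusionPath}, where $P_1$ or $P_m$ is allowed to be empty). If an endpoint carries a red half-edge, the vertex may well lie on a $\{G,B\}$-cycle rather than a $\{G,B\}$-path, and your argument for $k^{(\mathrm{start})},k^{(\mathrm{end})}>0$ breaks down. The paper's fix is simpler: whichever colour the endpoint half-edge has, its midpoint is a conical singularity of angle $\pi$ lying in the adjacent critical-graph component, and that singularity contributes to $k^{(v)}$.

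Second, your justification that $c'$ has no repeated \emph{vertices} is incomplete. The fusion-path hypothesis guarantees that each $\{G,B\}$-component is crossed at most once, so the induced walk in $\Gamma(\tau)$ has no repeated \emph{edges}; but two distinct red edges of $c$ could a priori lie in the same critical-graph component, giving a repeated vertex. The paper closes this gap in one line: since $\tau$ is spherical, $\Gamma(\tau)$ is a tree, and a walk with no repeated edges in a tree is automatically a simple path. You should invoke this explicitly.

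With these two fixes your outline is correct and matches the paper's (much terser) proof; the additional detail you give on the reattachment of neighbours to $v$ is fine and goes beyond what the paper spells out.
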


\begin{proof}
  The trace of $c$ in $\Gamma(\tau)$ is a sequence of adjacent edges that do not
  repeat. Because $\Gamma(\tau)$ is a tree, this sequence of edges is an induced
  path. By definition of a fusion path, each endpoint of $c$ is either a red
  half-edge or a green half-edge. In both cases, the associated vertex in
  $\Gamma(\tau)$ has a signature with a positive number $k$ of half-edges.

  The operation of fusion on $\Gamma(\tau)$ can be directly read from the
  description in \cref{lem:fusionPath}.
\end{proof}

Finally, we state a lemma that allows us to find fusion paths in spherical
square-tiled surfaces. It is a bit more general than what will be used to prove
\cref{lem:movesToPathLike}.
\begin{lemma} \label{lem:fusionPathExistence} Let $\tau$ be a spherical
  square-tiled surface made of $n$ triangles. Let $\Gamma(\tau)$ be the
  associated weighted stable tree. We assume that for each leaf
  $v \in V(\Gamma)$ the associated partition $(\mu^{(v)}, k^{(v)})$ satisfies
  $k^{(v)} \ge 1$. Let $i \in [n]$ such that it is either adjacent to a red
  half-edge in $\tau$ (i.e. $\tau_R(i) = i$) or such that $i$ and its red
  neighbour $\tau_R(i)$ belong to distinct horizontal (or \GB) components. Then,
  there exists a square-tiled surface $\tau'$ obtained from $\tau$ by doing
  powers of horizontal cylinder shears such that the vertical component in
  $\tau'$ containing the image of this red edge or half-edge is a fusion path
  intersecting horizontal components. Furthermore, $\tau'$ can be chosen so that
  this fusion path ends at a leaf of the weighted stable tree
  $\Gamma(\tau) = \Gamma(\tau')$. Moreover, the number of powers of horizontal
  cylinder shears performed to obtained $\tau'$ from $\tau$ is at most the
  number of vertices in the image of the resulting fusion path in
  $\Gamma(\tau')$
\end{lemma}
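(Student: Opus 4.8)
The plan is to grow the desired vertical component green-edge by green-edge, using powers of horizontal shears to steer it through the tree $\Gamma(\tau)$ until it reaches a leaf. The starting observation is that, by the remark in \cref{sssec:weightedStableGraph}, horizontal shears preserve the weighted stable tree, so throughout the construction $\Gamma(\tau')=\Gamma(\tau)$ and I only need to track how the vertical (\RG) component sits relative to $\Gamma$. Recall that an \RG-path alternates red and green edges; each green edge lies in a single \GB-component (an edge of $\Gamma$) and each red edge lies in a single singular layer (a vertex of $\Gamma$). Hence the trace of such a path in $\Gamma$ is a walk in which green edges are edges and red edges are vertices, and by \cref{lem:fusionPathDualGraph} being a fusion path amounts to this trace being an induced path of the tree $\Gamma$ whose two endpoints carry half-edges. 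The goal is therefore to realise, via shears, a vertical component whose $\Gamma$-trace is a simple path reaching a leaf.

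The heart of the argument is a local step that pushes the path across one cylinder. Suppose the partial path has reached a layer of $\Gamma$ and we wish to continue across an incident cylinder $e$. Crossing $e$ uses one green edge and lands on the opposite layer $B$; the crucial point is that applying a suitable power $\Shear_{e,G,B}^{k}$ acts on $e$ as a (half-)Dehn twist that cyclically shifts, by $k$ steps, the triangle at which the path exits $e$ on the boundary arc of $B$, while fixing $\tau_R$ and leaving every other \GB-component untouched. Since the exit triangle can thus be chosen to be any of the $w_e$ triangles on that arc, and since immediately after exiting the path takes exactly one red edge inside $B$, I can choose $k$ so that this red edge leads either into a new cylinder $e'\neq e$ incident to $B$ or onto a half-edge of $B$. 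Because $\Gamma$ is a tree and the green edge of $e$ is used only once, the path never revisits $e$, so the fusion condition on \GB-cycles is preserved; the single green half-edge condition on \GB-paths is handled the same way at the two ends.

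With this local step in hand I build the path greedily. Starting from $i$: if $\tau_R(i)=i$ is a red half-edge, then $i$ is already an endpoint of an \RG-path lying in a layer with $k>0$, and I grow the path in the one available direction; if instead the red edge at $i$ joins two distinct \GB-components, this edge already crosses from one cylinder's neighbourhood into another, and I grow the path in both directions. At each step I either advance into a not-yet-visited cylinder, making progress in the finite tree $\Gamma$, or, when the red edges issuing from the current boundary arc all fold back onto that same arc or reach a half-edge, I terminate. In the latter case the current layer is incident to no other cylinder, hence is a leaf of $\Gamma$, and the hypothesis $k^{(v)}\ge 1$ guarantees a half-edge there at which the \RG-path may legitimately end. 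Since no cylinder is revisited the process stops after finitely many steps at a leaf, and the resulting vertical component is a fusion path by construction. As I shear each cylinder of the trace at most once, and the cylinders on an induced path are one fewer than its vertices, the total number of powers of horizontal shears is at most the number of vertices of the trace in $\Gamma(\tau')$, as required.

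The main obstacle I anticipate is the local step: verifying at the level of the involutions $\tau_R,\tau_G,\tau_B$ that a single power $\Shear_{e,G,B}^{k}$ genuinely realises an arbitrary cyclic shift of the exit triangle of $e$, and establishing the clean dichotomy underlying termination. Concretely, one must show that the red edges leaving the boundary arc of $e$ in a layer $B$ stay entirely within that arc if and only if $B$ is a leaf of $\Gamma$; when $B$ is not a leaf some exit therefore reaches a genuinely new cylinder, and when $B$ is a leaf the hypothesis $k^{(v)}\ge 1$ forces a reachable half-edge. This requires careful bookkeeping of how the green/blue swap slides the red attachments of a cylinder, together with the tree structure of each singular layer's critical graph in genus $0$; it is precisely here that the leaf hypothesis rules out the degenerate situation in which the path folds back onto $e$ forever without ever meeting a half-edge.
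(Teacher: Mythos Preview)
Your proposal is correct and follows essentially the same strategy as the paper: grow the \RG-path across $\Gamma(\tau)$ one cylinder at a time, using a power of the horizontal shear on each crossed \GB-component to align the incoming red edge with a green edge whose far side carries either a red edge into a new cylinder (advance) or a red half-edge (terminate at a leaf, available by the hypothesis $k^{(v)}\ge 1$). The paper phrases the advance/terminate dichotomy via the Jordan curve theorem (the \GB-cycle separates the sphere; the far side is either a leaf or contains another cylinder reachable by a red edge), which is exactly the ``clean dichotomy'' you flag as the main obstacle; your formulation via the tree structure of $\Gamma$ and connectivity of each singular layer is equivalent.

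The one genuine difference is the case $\tau_R(i)\neq i$. You grow the path in both directions from the given red edge; the paper instead temporarily cuts that red edge into two red half-edges, runs the half-edge argument independently on each piece, and then glues back (the two half-edges stay in the same face under horizontal shears, so the gluing is legitimate). Both are valid: your bidirectional growth is more direct, while the paper's cut-and-glue trick lets them invoke the already-proved half-edge case verbatim without rechecking anything.
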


\begin{proof}
  Let us first consider the case of $i$ being adjacent to a red half-edge, that
  is $i = \tau_R(i)$. We denote $e_0$ this red half-edge and our aim is to
  extend it to a fusion path. If the horizontal (or \GB) component of
  $S^*(\tau)$ containing $i$ is a path, then one can perform a power of a
  horizontal cylinder shear on this path until $e_0$ becomes adjacent to a green
  half-edge. This \RG-path (made of two half-edges) is a fusion path. Let us
  assume now that $i$ belongs to a \GB-cycle of $S^*(\tau)$ that we denote
  $h_1$.  By the Jordan Curve theorem, $h_1$ separates the sphere into two
  distinct connected components.  If the connected component that does not
  contain $e_0$ is a terminal component, i.e. a leaf of $\Gamma(\tau)$, then it
  contains a red half-edge $e_1$. Indeed, by assumption we have that
  $k^{(v)} > 0$ for every leaf $v \in \Gamma(\tau)$.  In that case, one can
  apply a power of a horizontal cylinder shear on $h_1$ so that $e_0$ and $e_1$
  become adjacent to a common green edge of $h_1$.  This terminates the
  construction in that situation. Now, if the connected component is not a leaf
  of $\Gamma(\tau)$ then there exists a red edge $e_1$ in that connected
  component connecting $h_1$ to a distinct \GB-component $h_2$. By performing a
  horizontal cylinder shear on $h_1$ one can make $e_0$ and $e_1$ adjacent to a
  common green edge. We can continue this construction starting from $e_1$
  instead of $e_0$ and obtain a fusion path.

  To handle the case of $i \not= \tau_R(i)$, we cut this red edge into two
  half-edges $e_0$ and $e'_0$ (the constructed square-tiled surface remains
  planar). The previous situation allows to build two fusion paths containing
  these two half-edges in some square-tiled surface $\tau'$. Because $\tau'$ was
  obtained by performing only horizontal cylinder shears, $e_0$ and $e'_0$
  belong to the same face of $\tau'$ and we can glue them back and obtain a
  fusion path in some square-tiled surface $\tau''$. It is easy to see that
  $\tau''$ is obtained from $\tau$.

  Note that at most one power of a horizontal cylinder shear was performed for
  each horizontal component intersecting the fusion path. We claim that it is
  enough to do one such power in each horizontal cylinder of $S(\tau)$. Indeed,
  if two horizontal components of the cubic graph $S^*(\tau)$ belong to the same
  cylinder of $S(\tau)$ then performing a cylinder shear on one or the other
  results in isomorphic square-tiled surfaces. This concludes the proof.
\end{proof}

We are now ready to prove \cref{lem:movesToPathLike}.
\begin{proof}[Proof of \cref{lem:movesToPathLike}]
  Let $S(\tau)$ be a spherical square-tiled surface with profile $(\mu, k)$ with
  $\mu_1 \le 1$. By Euler characteristic consideration, for each leaf $v$ of the
  stable tree $\Gamma(\tau)$ we have $k^{(v)} + \mu^{(v)}_1 \ge 2$. The assumption
  on $\mu_1$ hence implies that $k^{(v)} \ge 1$ for each leaf $v$. In particular,
  $\tau$ satisfies the assumptions of \cref{lem:fusionPathExistence}.

  Assume that $\tau$ is not path-like. Then there is a red edge in $S^*(\tau)$
  whose endpoints belong to two distinct \GB-components. By
  \cref{lem:fusionPathExistence}, there exists a square-tiled surface $\tau'$
  obtained from $\tau$ by a sequence of horizontal cylinder shears so that this
  red edge is part of a fusion path that ends in leaves of $\Gamma(\tau)$.  By
  \cref{lem:fusionPathDualGraph} performing a single vertical cylinder shear on
  this fusion path results in a square-tiled surface $\tau''$ whose weighted
  stable graph $\Gamma(\tau'')$. One can continue this process until it reaches
  a path-like square-tiled surface.

  Let us now count how many cylinder shears are performed.  Each step involves
  some power of horizontal cylinder shears and a single vertical cylinder
  shear. We first derive a bound on the number of steps performed. Each step
  reduces the weighted stable graph by contracting a path between two
  leaves. The number of steps is hence bounded by the number of leaves, which is
  $O(k)$ (because each leaf of $\Gamma(\tau)$ corresponds to a component
  containing a half-edge). In particular, this bounds the number of vertical
  cylinder shears by $O(k)$.  Let us now bound the number of powers of
  horizontal cylinder shears.  At each step, this number is bounded by the
  length of the image of the fusion path in $\Gamma(\tau)$ used at that
  step. But since the fusion paths from different steps pass through different
  edges of the weighted stable graph their total number is at most the number of
  edges in $\Gamma(\tau)$ which also is $O(k)$.
\end{proof}

\section{Reconfiguring path-like configurations}
\label{ssec:reconfigurationPathLike}

We now show that any two path-like configurations can be reconfigured efficiently
into one another. Namely we prove the following.

\begin{proposition} \label{lem:movesBetweenPathLike} Let $(\mu,k)$ be a spherical
  profile with $\mu_1 \le 1$. Then any pair of path-like square-tiled surfaces
  $\tau$ and $\tau'$ in $\ST_{quad}(\mu)$ can be connected with a sequence of
  $O(k)$ powers of cylinder shears.
\end{proposition}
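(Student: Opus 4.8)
The plan is to establish a canonical form for path-like configurations with a given profile and show that every such configuration can be driven to it with $O(k)$ powers of cylinder shears. By \cref{thm:dualTree}, path-like configurations with profile $(\mu,k)$ map surjectively onto decorated plane trees with profile $(\mu,k-2)$, and each fiber is a single orbit under horizontal shears. So the whole problem reduces to reconfiguring the associated \emph{decorated plane trees} into one another: if I can transform the decorated plane tree $(T,L)$ of $\tau$ into the decorated plane tree $(T',L')$ of $\tau'$ using operations realizable by $O(k)$ powers of cylinder shears, then \cref{thm:dualTree} lets me adjust within each fiber for free (the fiber is one horizontal-shear orbit), and I am done.

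First I would understand how a \emph{vertical} cylinder shear acts on the decorated plane tree. A path-like configuration has a single horizontal cylinder (the unique \GB-path), so its vertical (\RG) components are short paths living between consecutive red edges; a vertical shear performs a local modification of how the red-edge tree $T$ is attached to the \GB-path. I expect that an appropriately chosen vertical shear (possibly combined with a power of the horizontal shear to reposition the \GB-path, as in the proof of \cref{thm:dualTree}) realizes an \emph{edge-rotation} or \emph{leaf-reattachment} move on $(T,L)$: detaching a subtree or a leaf at one vertex and reattaching it at an adjacent vertex, while preserving the profile $(\mu,k-2)$. The key structural fact I would lean on is that the space of plane trees with a fixed degree sequence is connected under such local rotation moves, and moreover that any two can be connected in $O(\#\text{edges}) = O(k)$ moves — this is the tree analogue of the classical flip-connectivity results, and the linear diameter is exactly what forces the $O(k)$ bound. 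The decoration $L$ (the choice of $k-2$ distinguished leaves) can be permuted among the leaves by similar local moves, again within a linear budget.

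The concrete strategy I would carry out is to fix a canonical decorated plane tree for the profile $(\mu,k-2)$ — for instance a ``caterpillar'' or star-like tree in which all high-degree vertices are arranged along a spine — and show: (i) any decorated plane tree reduces to this canonical one in $O(k)$ rotation/reattachment moves, and (ii) each such move is implemented by $O(1)$ powers of cylinder shears (a bounded number of vertical shears together with at most one power of the horizontal shear to slide the \GB-path into the required position). Composing, $\tau$ reaches a canonical path-like surface and $\tau'$ reaches (possibly a different representative of) the same canonical decorated plane tree; a final horizontal-shear power inside the common fiber identifies them by \cref{thm:dualTree}. Summing the two reductions gives $O(k) + O(k) = O(k)$ powers of cylinder shears.

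The main obstacle I anticipate is step (ii): verifying that each elementary tree move is genuinely realized by a constant number of cylinder shears. A vertical \RG-component in a path-like surface is typically just a single green edge or green half-edge sitting between two red edges, so a vertical shear only transposes adjacent attachments locally; to move a subtree across a vertex one may need to first translate the \GB-path (via a power of the horizontal shear) so that the relevant green edge lines up with the desired red edges. Controlling exactly which local tree modifications are reachable — and confirming that the full rotation move on $(T,L)$ does not require a number of shears growing with the subtree size — is the delicate point. I would handle it by working directly with the marking-site picture from the proof of \cref{thm:dualTree}, tracking how two $(G,B)$-shears advance the marks one site at a time, and arguing that a single reattachment corresponds to advancing the configuration by a bounded number of marking sites rather than traversing the whole perimeter.
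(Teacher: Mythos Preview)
Your high-level strategy matches the paper's: reduce to decorated plane trees via \cref{thm:dualTree}, drive every tree to a canonical caterpillar in $O(k)$ elementary tree moves, and realize each move by $O(1)$ powers of cylinder shears. The gap is precisely where you yourself flag it: step~(ii). You propose ``edge-rotation'' or ``leaf-reattachment'' moves and hope a single vertical shear plus a horizontal repositioning implements one, but you never verify this, and in fact a generic edge rotation on $T$ is \emph{not} obviously realizable by a bounded number of shears. A vertical \RG-component in a path-like surface is not always a single green edge between two red edges; its shape depends on where the red half-edges sit, and a vertical shear on it can break the unique \GB-path into two pieces rather than effect a clean local reattachment. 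Your marking-site heuristic controls horizontal shears within a fixed fiber of the map to $(T,L)$, but it does not by itself tell you which modifications of $T$ are reachable.

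The paper resolves this by identifying two very specific tree operations and proving each is realizable by at most two vertical shears and a bounded number of horizontal-shear powers. The first, \emph{glue and cut}, takes two $L$-leaves (these are exactly the red half-edges, which is why they are the natural pivot points), merges them into an edge, and then cuts some other edge of the resulting cycle into two new $L$-leaves; the realization passes through an intermediate non-path-like surface with two \GB-paths. The second, \emph{decoration exchange}, swaps the $\overline L$-leaf with one of a pair of consecutive $L$-leaves at a common vertex. Neither of these is a simple edge rotation. The caterpillar reduction and the subsequent sorting of the spine are then carried out using only these two moves, with extra care (and in one exceptional stratum $([1,2^{\mu_2},3],4)$ a separate ad hoc argument) to handle the unique $\overline L$-leaf when $\mu_1=1$. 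So your outline is on the right track, but the missing idea is the correct choice of elementary tree move --- one that exploits the $L$-leaves as cut/glue sites --- together with the explicit verification that it costs $O(1)$ shears.
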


Note that a closer look at the proof shows that this sequence uses \emph{powers} of
vertical cylinder shears instead of vertical cylinder shears only in the
strata of the form $([1, 2^{\mu_{2}},3],4)$.

Throughout this section, all the square-tiled surfaces we consider will be
spherical and path-like. Using \cref{thm:dualTree}, we work directly on
decorated plane trees on which we will define reconfiguration operations called
\emph{glue and cut} (see \cref{ssec:glueAndCut}) and \emph{decoration exchange}
(see \cref{ssec:decorationExchange}).

To show that two decorated plane trees are equivalent, a common technique in
reconfiguration consists in showing that all decorated plane trees are
equivalent to a specific one. The sketch of the proof is as follows. First, we
show that all decorated plane trees are equivalent to some \emph{nice} decorated
plane tree: a decorated plane tree $(T,L)$ with at most one $\overline L$-leaf
and when it exists, this leaf is attached to a vertex of maximal degree (see
\cref{ssec:nice}). Second, we show that all nice decorated plane trees are
equivalent up to a sequence of glue and cut operations to some nice decorated
plane tree $(T,L)$ with the additional property that $T$ is a caterpillar,
i.e. a tree whose internal vertices form a path (see
\cref{ssec:triangleFreeCaterpillar}). Finally, we show that all nice decorated
plane caterpillars are equivalent by reordering the vertices of degree at least
three (see \cref{ssec:triangleFreeCaterpillar}). Using these intermediate
lemmas, we conclude the proofs of \cref{lem:movesBetweenPathLike} and
\cref{thm:connectednessSphere} in \cref{ssec:movesBetweenPathLike}.

\subsection{Glue and cut}\label{ssec:glueAndCut}
Let us recall from \cref{ssec:pathLike} that path-like configurations are
square-tiled surfaces that admit a single $\{G,B\}$-component which furthermore
is a path. To a path-like configuration one associates a decorated plane tree
$(T,L)$ which encodes its orbit under the unique $\{G,B\}$ cylinder shear, see
\cref{thm:dualTree}.

The glue and cut operation on a decorated plane tree $(T,L)$ is a
reconfiguration operation that produces a new decorated plane tree $(T',L')$. It
is better described decomposed in two steps. First, we delete two $L$-leaves and
replace them by an edge between their parents. Then we delete an edge $uv$ in
the newly formed cycle and replace it by two pending $L'$-leaves attached to $u$
and $v$ (see \cref{fig:glueCut}). The resulting decorated plane tree is
$(T',L')$. Note that $|L| = |L'|$.

\begin{figure}[!ht]
  \begin{center}%

    \begin{subfigure}[t]{.48\textwidth}
      \centering
      \includegraphics[width=.95\textwidth]{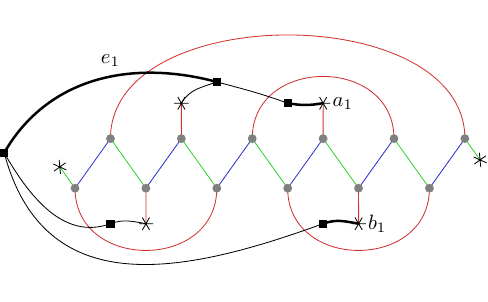}%
      \caption{Up to a power of a horizontal shear, the two half-edges $a_1$ and
        $b_1$ impacted by the gluing are incident to a common green edge.}
      \label{fig:glueCutA}
    \end{subfigure}
    \hfill
    \begin{subfigure}[t]{.48\textwidth}
      \centering
      \includegraphics[width=.95\textwidth]{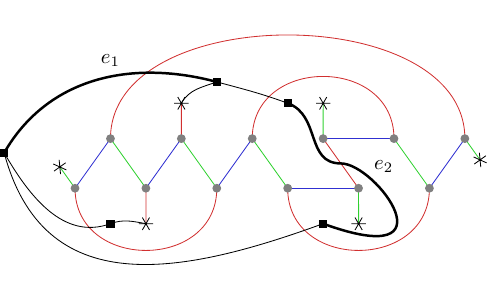}%
      \caption{The configuration obtained from \cref{fig:glueCutA} after the
        vertical shear on the \RG-path connecting the dual of $a_1$ and $b_1$.}
      \label{fig:glueCutB}
    \end{subfigure}

    \begin{subfigure}[t]{.48\textwidth}
      \centering
      \includegraphics[width=.95\textwidth]{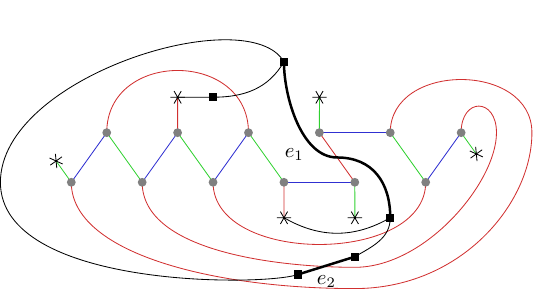}%
      \caption{The configuration obtained from \cref{fig:glueCutB} after powers
        of horizontal shears on its two \GB-paths.}
       \label{fig:glueCutC}
    \end{subfigure}
    \hfill
    \begin{subfigure}[t]{.48\textwidth}
      \centering
      \includegraphics[width=.95\textwidth]{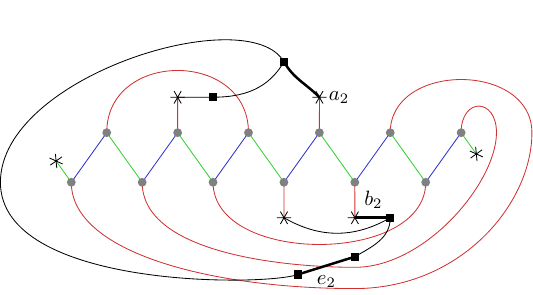}%
      \caption{The configuration obtained from \cref{fig:glueCutC} after the
        vertical shear on the \RG-path containing the dual of $e_1$.}
       \label{fig:glueCutD}
     \end{subfigure}

  \end{center}
  \caption{Glue and cut operation. The outer face is marked by a square
    node. The edges and leaves impacted by the glue and cut operation are
    thickened.\label{fig:glueCut}}
\end{figure}

The following lemma shows that glue and cut operations on decorated plane
trees decompose as a sequence of cylinder shears on the underlying
square-tiled surfaces.
\begin{lemma}\label{lem:glueCutCompo}
  Let $\tau_1$ and $\tau_2$ be the two path-like square-tiled surfaces with
  associated decorated plane trees $(T_1,L_1)$ and $(T_2,L_2)$ that differ by one glue
  and cut operation. Then $\tau_1$ can be connected to $\tau_2$ with a sequence
  of shears made of at most two vertical shears and four powers of horizontal
  shears.
\end{lemma}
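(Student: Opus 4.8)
The plan is to realise the two steps of the glue and cut operation by two vertical cylinder shears, each performed on a suitable fusion \RG-path, separated and framed by powers of the horizontal cylinder shear that merely move half-edges along the \GB-paths without changing the decorated plane tree. Throughout, I use the dictionary of \cref{thm:dualTree}: in a path-like configuration the $L$-leaves correspond to the red half-edges of $S^*(\tau)$, the internal edges of $T$ to the red edges, and the unique \GB-path is the single horizontal cylinder; moreover the orbit of $\tau_1$ under the horizontal shear is recorded by $(T_1,L_1)$, so a power of the horizontal shear never changes the underlying decorated plane tree.

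First I would perform the glue. The two $L$-leaves to be deleted correspond to two red half-edges $a_1$ and $b_1$. Applying a single power of the horizontal shear, I bring $a_1$ and $b_1$ to be incident to a common green edge of the \GB-path, as in \cref{fig:glueCutA}. The \RG-component through this green edge is then a fusion path consisting of the red half-edge $a_1$, the green edge, and the red half-edge $b_1$. Performing the $(R,G)$-shear along it (\cref{fig:glueCutB}) turns, by the definition of the shear together with \cref{lem:fusionPath}, the green edge into a red edge joining the parents of $a_1$ and $b_1$, while the two red half-edges become green half-edges; simultaneously the unique \GB-path is recombined into two \GB-paths, each ending at one of the new green half-edges. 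This is exactly the first step of the glue and cut: the two $L$-leaves have been replaced by a red edge between their parents, creating the prescribed cycle, at the cost of leaving a temporarily non-path-like configuration.

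Next I would perform the cut on the edge $e_1 = uv$ of that cycle. Applying powers of the horizontal shear on the two \GB-paths created above (\cref{fig:glueCutC}), I move the two green half-edges so that the \RG-component through $e_1$ becomes a fusion path with $e_1$ as its unique green edge, in the configuration symmetric to the one reached after the glue. A second $(R,G)$-shear, now along the \RG-path containing the dual of $e_1$ (\cref{fig:glueCutD}), reverses the previous local picture: the red edge $e_1$ becomes a green edge, the two green half-edges become red half-edges attached at $u$ and $v$, and the two \GB-paths are fused back into a single \GB-path. The resulting surface is again path-like, and reading off its decorated plane tree via \cref{thm:dualTree} yields precisely $(T_2,L_2)$, in which $e_1$ has been deleted and two $L'$-leaves have been attached at $u$ and $v$.

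Counting the operations, the glue uses one power of a horizontal shear and one vertical shear, and the cut uses at most three powers of horizontal shears (at most one per \GB-path, plus one to re-normalise the configuration as path-like) together with one vertical shear, giving at most two vertical shears and four powers of horizontal shears, as claimed. The main obstacle is the bookkeeping in the middle: one must verify that the net effect on the decorated plane tree is exactly one glue and cut, that is, that after the first vertical shear the two \GB-paths and the two new green half-edges are arranged so that the intermediate horizontal shears can realign them, and that the second vertical shear cuts precisely $e_1$ and attaches the new $L'$-leaves to $u$ and $v$ while restoring a single \GB-path. This is where \cref{lem:fusionPath}, and its reformulation \cref{lem:fusionPathDualGraph} on the weighted stable graph, do the heavy lifting, by controlling how the \GB-components merge and split under each of the two vertical shears.
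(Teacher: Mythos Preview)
Your argument is correct and follows the paper's proof step for step: a power of the horizontal shear to align $a_1,b_1$, one vertical shear for the glue (splitting the \GB-path in two), powers of horizontal shears on each of the two resulting \GB-paths, a second vertical shear for the cut (re-merging them), and a final horizontal power via \cref{thm:dualTree} to land on $\tau_2$. Two small slips worth fixing: the first \RG-path (red half-edge, green edge, red half-edge) is \emph{not} a fusion path in the sense of the paper, since its intersection with the unique \GB-path is a green edge rather than a green half-edge, so \cref{lem:fusionPath} does not apply there and in any case that lemma is about merging \GB-components, not splitting one; your direct computation from the definition of the shear is what actually justifies the split. Also, in the cut step the \RG-path has the dual of $e_1$ as its unique \emph{red} edge, flanked by two green half-edges, not as a green edge.
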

\begin{proof}
  The reconfiguration sequence is illustrated by an example in
  \cref{fig:glueCut}.

  Recall that $L_1$ is the subset of leaves of $T_1$ which are dual to red
  half-edges in $\tau_1$. Let $a_1$ and $b_1$ denote the elements of $L_1$ that
  are joined by the glue operation. Let also $e_1$ denote the edge of $T_1$
  removed by the cut operation.

  First, perform in $\tau_1$ a power of the horizontal shear on the unique
  \GB-path until the dual half-edges of $a_1$ and $b_1$ are at distance 3 in the
  tricoloured cubic graph $S^*(\tau)$ and separated by a green edge (see
  \cref{fig:glueCutA}). Then, performing a vertical shear on the path between
  the dual half-edges to $a$ and $b$ breaks the \GB-path into two \GB-paths $P$
  and $Q$ (see \cref{fig:glueCutB}). After this operation, the tricoloured cubic
  graph is not path-like anymore and the dual of the red edges is a graph that
  consists of the tree $T_1$ in which $a_1$ and $b_1$ have been joined to form
  an edge (see \cref{fig:glueCutB}). We call $e_2$ the edge in this graph
  resulting from the fusion of $a_1$ and $b_1$. Note that the edge $e_1$ still
  exists in this graph and is distinct from $e_2$.

  Next, by performing a power of a horizontal shear on each of the \GB-paths $P$
  and $Q$, we move the red edge dual to $e_1$ such that it connects the
  endpoints of $P$ and $Q$ and that there is a green half-edge adjacent at each
  of these endpoints (see \cref{fig:glueCutC}). The choice of which endpoints of
  $P$ and $Q$ are used at this step is irrelevant.  Next, we perform a vertical
  shear on the \RG-path containing the red edge dual to $e_1$. This operation
  destroys $e_1$ in the graph dual to the red edges and creates instead to
  leaves $a_2$ and $b_2$. The two paths $P$ and $Q$ of the tricoloured cubic
  graph are merged into a single \GB-path, thereby obtaining a path-like
  configuration again (see \cref{fig:glueCutD}). By construction, the decorated
  plane tree associated to this resulting tricoloured cubic graph is
  $(T_2, L_2)$.  Finally, by \cref{thm:dualTree} we can use one more power of a
  \GB-shear along the unique \GB-path to reach $\tau_2$.

  The sequence of cylinder shears constructed above from $\tau_1$ to $\tau_2$
  provides the bound from the statement and concludes the proof.
\end{proof}

\subsection{Decoration exchange} \label{ssec:decorationExchange}
In this subsection, we present another reconfiguration operation on decorated
plane trees called \emph{decoration exchange}, that given a decorated plane tree
$(T,L)$, modifies $L$ but not $T$. Similarly to the glue and cut operation from
\cref{ssec:glueAndCut} we prove that this operation on decorated plane trees can
also be obtained by a sequence of cylinder shears on the underlying square-tiled
surfaces.

Let $(T,L)$ be a decorated plane tree containing the two following subtrees: a
$\overline L$-leaf $u$, and two $L$-leaves $v, w$ adjacent to a common vertex
$x$ and consecutive in the cyclic ordering of the vertices around $x$. Let $L'$
be the set of leaves of $T$ obtained by replacing $v$ by $u$ in $L$. We say that
$(T,L)$ and $(T,L')$ differ by a \emph{decoration exchange}.

The operation is illustrated in \cref{fig:decorationExchangeDef}.
\begin{figure}[!ht]

\begin{subfigure}[t]{.48\linewidth}
\centering
\includegraphics{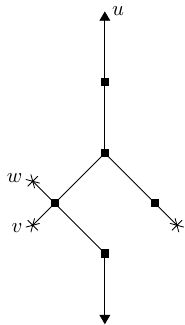}
\end{subfigure}
\hfill
\begin{subfigure}[t]{.48\linewidth}
\includegraphics{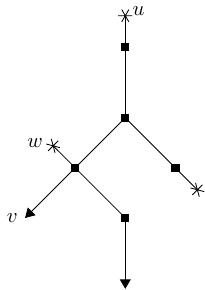}
\end{subfigure}
\caption{A decoration exchange on decorated plane trees.}
\label{fig:decorationExchangeDef}
\end{figure}

\begin{lemma}\label{lem:transferCompo}
  Let $(T,L)$ and $(T, L')$ be two decorated plane trees that differ by a
  decoration exchange. Let $\tau$ and $\tau'$ be two path-like configurations
  whose decorated plane trees are respectively $(T,L)$ and $(T,L')$
  respectively. Then $\tau$ can be connected to $\tau'$ by a sequence made of at
  most two vertical cylinder shear and 4 powers of horizontal shears.
\end{lemma}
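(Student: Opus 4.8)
The plan is to realise the decoration exchange by an explicit, bounded sequence of cylinder shears on the underlying path-like configurations, in the same spirit as the glue-and-cut sequence of \cref{lem:glueCutCompo}. The reason the operation should be realisable is geometric: in the half-translation surface attached to a quadratic square-tiled surface, both the $1$-faces of $S^*(\tau)$ (the $\overline L$-leaves of $T$) and the red half-edges (the $L$-leaves) produce conical singularities of angle $\pi$, which is precisely the identity $\kappa_{-1}=\mu_1+k$. A decoration exchange merely reclassifies two such singularities, so it should be expressible by shears without leaving the stratum. Concretely, reading off $(T,L)$, the $\overline L$-leaf $u$ is dual to a full red edge $e_u$ joining the $1$-face $u$ to its neighbour $y$, while the consecutive $L$-leaves $v,w$ are dual to two red half-edges incident to the face $x$ that are adjacent in the cyclic order of red elements around $x$. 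The target $S^*(\tau')$ must differ only in that $e_u$ becomes a red half-edge at $y$ while the half-edge at $v$ becomes a full red edge bounding a new $1$-face sitting in the corner of $x$ between $v$ and $w$; crucially the tree $T$ itself is unchanged.

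I would obtain this by two vertical \RG-shears, each bracketed by powers of horizontal \GB-shears used to position the relevant red elements along the unique \GB-path, exactly as the glue and cut of \cref{lem:glueCutCompo} do. The first vertical shear, applied after bringing the red half-edge at $v$ into the prescribed position relative to $w$ and $e_u$ (at distance $3$ along the \GB-path, separated by a single green edge, as in \cref{fig:glueCutA}), relocates the $1$-face material into the corner of $x$ between $v$ and $w$ and breaks the single \GB-path into two \GB-paths. The second vertical shear, after a further repositioning by powers of horizontal shears on these two \GB-paths, turns $e_u$ into a red half-edge at $y$ and re-merges the two \GB-paths into one, restoring a path-like configuration. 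I would track the \GB-path throughout by the marking sites on $(T,L)$ used in the proof of \cref{thm:dualTree}, which certifies that the final configuration has a single \GB-path again, and finish with at most one further power of a \GB-shear to reach the canonical representative of the orbit of $(T,L')$. The intermediate, non-path-like configurations are legitimate square-tiled surfaces of the same profile by \cref{lem:profileIsInvariant}, and the whole sequence uses at most two vertical shears and four powers of horizontal shears, as claimed.

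The step I expect to be the crux is proving that the net combinatorial effect is exactly $(T,L')$ with the tree $T$ genuinely unaltered, rather than a neighbouring tree as in a generic glue-and-cut. This is precisely where the hypothesis that $v$ and $w$ are \emph{consecutive} around $x$ is used: it guarantees that the $1$-face produced by the first vertical shear is attached to $x$ by a single red edge and lies in one corner of $x$, so that $v$ becomes an $\overline L$-leaf of the \emph{same} tree while $w$ survives as an $L$-leaf, and that every other adjacency — in particular the cyclic orders of red elements around $x$ and $y$ and the planarity of the embedding — is preserved. Verifying this, together with checking that the half-edge freed at $u$ lands on the side of $y$ prescribed by $L'$, is the planar bookkeeping the proof must carry out; once it is in place, the shear count follows immediately from the construction.
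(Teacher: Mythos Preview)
Your plan is the paper's plan: two vertical \RG-shears bracketed by powers of horizontal \GB-shears, the first breaking the unique \GB-path in two and the second re-merging, with a final horizontal power to hit the correct representative in the $(T,L')$-fibre via \cref{thm:dualTree}. The shear count matches.

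Where your description drifts from the paper is the setup and effect of the first vertical shear. You position ``$v$ at distance $3$ \ldots\ as in \cref{fig:glueCutA}'', but the paper instead uses the horizontal power to bring the half-edge dual to $w$ adjacent to the $1$-face dual to $u$; the \RG-path then runs from the half-edge at $v$ \emph{through the red and green edges of the $1$-face} to the half-edge at $w$. That inclusion of the $1$-face in the \RG-path is the mechanism that moves it, and the consecutivity of $v,w$ around $x$ is exactly what makes this a single \RG-component after the positioning. The effect of this first shear is that $v$ becomes the new $\overline L$-leaf while $u$ and $w$ are merged into a red edge $e$ joining $y$ to $x$; the second vertical shear is then performed on $e$ together with its incident green half-edges and cuts $e$ back into the $L'$-leaves $u$ (at $y$) and $w$ (at $x$). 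Your narrative (first shear ``relocates the $1$-face into the corner of $x$'', second shear ``turns $e_u$ into a half-edge'') suggests the $1$-face and the half-edge at $u$ are handled by separate shears, which would violate \cref{lem:profileIsInvariant}; in fact both happen simultaneously in the first shear, and the second shear is a pure cut of $e$. Once you adjust the first step to the correct \RG-path, the bookkeeping you flag as the crux becomes straightforward.
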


  \begin{figure}[h!]

    \begin{subfigure}[t]{.48\linewidth}
      \centering
      \includegraphics{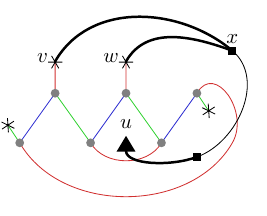}
      \caption{Up to a power of a horizontal shear, the half-edge corresponding
        to $w$ is adjacent to the $1$-face in the initial configuration}
      \label{fig:triangleTransferA}
    \end{subfigure}
    \hfill
    \begin{subfigure}[t]{.48\linewidth}
      \centering
      \includegraphics{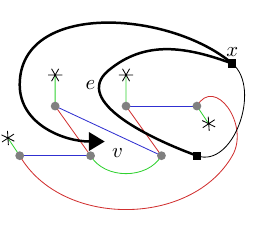}
      \caption{After a vertical shear, the $1$-face is moved and two half-edges
        are merged into an edge $e$}
      \label{fig:triangleTransferB}
    \end{subfigure}

    \begin{subfigure}[t]{.48\linewidth}
      \centering
      \includegraphics{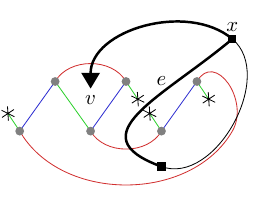}
      \caption{Same configuration as \cref{fig:triangleTransferB}}
      \label{fig:triangleTransferC}
    \end{subfigure}
    \hfill
    \begin{subfigure}[t]{.48\linewidth}
      \centering
      \includegraphics{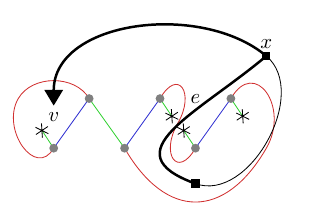}
      \caption{After a power of a horizontal shear, the edge $e$ connect the
        extremities of the two \GB-paths $P_1$, $P_2$}
      \label{fig:triangleTransferD}
    \end{subfigure}

    \begin{subfigure}[t]{.48\linewidth}
      \centering
      \includegraphics{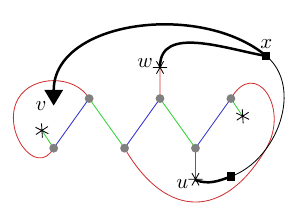}
      \caption{After a vertical shear on $e$ and the incident red half-edges, $e$
        is replaced by two $L'$-leaves}
      \label{fig:triangleTransferE}
    \end{subfigure}

    \caption{How to realise a decoration exchange with a sequence of cylinder
      shears}\label{fig:decorationExchangeProof}
  \end{figure}

\begin{proof}
  \cref{fig:decorationExchangeProof} shows an illustrative example of the
  tricoloured cubic graphs in the reconfiguration sequence.

  Recall that on a decorated plane tree $(T,L)$ associated to a path-like
  configuration $\tau$, the {$\overline L$-leaves} correspond to the 1-faces of
  the tricoloured cubic graph $S^*(\tau)$, which are geometrically triangles.
  Equivalently, they correspond to vertices of degree 3 in the square-tiled
  surface $S(\tau)$.

  Let $u$ be the $\overline L$-leaf of $T$ and $v, w$ the two $L$-leaves
  impacted by the decoration exchange. Let $x$ be the common neighbour of $v$
  and $w$. Perform in $S^*(\tau)$ a power of the horizontal shear on the unique
  \GB-path until the half-edge dual to $w$ is adjacent to the face of degree 3
  dual to $u$ (see \cref{fig:triangleTransferA}). Perform a vertical shear on
  the \RG-path composed of the half-edges dual to $v$ and $w$ and passing by the
  face of degree 3 dual to $u$. This separates the \GB-path into two \GB-paths
  $P$ and $Q$, adds $v$ to $L$ and replaces $u$ and $w$ by an edge $e$ in the
  decorated plane tree (see
  \cref{fig:triangleTransferB,fig:triangleTransferC}). Perform on $P$ and $Q$ a
  power of a horizontal shear, until the edge dual to $e$ connects the
  extremities of both paths (see \cref{fig:triangleTransferD}). Perform a
  vertical shear on the edge dual to $e$ and the incident green half-edges. This
  replaces $e$ by two leaves $u$ and $w$ of $L'$ and yields the desired
  decorated plane tree $(T,L')$.

  Finally, by \cref{thm:dualTree} we can use one more power of a \GB-shear along
  the unique \GB-path to reach $\tau'$.
\end{proof}

\subsection{Making a decorated plane tree nice} \label{ssec:nice}
In this section we describe how the glue and cut
operation and the decoration exchange operation can be combined
to transform any decorated plane tree into a nice tree.

Recall the following definition: A decorated plane tree $(T,L)$ of profile
$(\mu,k-2)$ is \emph{nice} if $\mu_1 = 0$ or if $\mu_1 = 1$ and the only
$\overline L$-leaf of $T$ is adjacent to a vertex of maximal degree.

The following technical lemmas connect decorated plane trees to nice ones, but
only in the subset of spherical profiles with $\mu_1 \le
1$. \cref{lem:removingTriangles} handles all such profiles but those of the form
$([1,2^{\mu_2},3], 4)$, which are handled by \cref{lem:removingTriangles2}.
\begin{lemma}\label{lem:removingTriangles}
  Let $(T,L)$ be a decorated plane tree of profile $(\mu,k-2)$, such that
  $\sum \mu_i(i - 2) = k-4$, with $\mu_1\le 1$ and
  $(\mu, k) \neq ([1,2^{\mu_2},3], 4)$. There exists a sequence composed of at
  most one decoration exchanges and two glue and cut operations that leads to a
  nice decorated plane tree of profile $(\mu,k-2)$.
\end{lemma}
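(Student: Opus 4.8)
The plan is to dispose first of the cases where $(T,L)$ is already nice, and then reduce the one genuine case to a short explicit sequence of moves. First I would observe that if $\mu_1=0$ there is nothing to do: with no $\overline{L}$-leaf, $(T,L)$ is nice by definition. When $\mu_1=1$, write $u$ for the unique $\overline{L}$-leaf, $p$ for its neighbour, and let $d$ be the maximal degree occurring in $\mu$. If $\deg(p)=d$ we are again done. If $d\le 2$ then $T$ is a path and $u$ sits at one of its ends, hence is adjacent to a vertex of the maximal degree, so $(T,L)$ is nice. The only case left is thus $\mu_1=1$, $d\ge 3$, and $\deg(p)<d$. A short count using the tree identity $\sum_i(i-2)\mu_i=k-4$ shows that, once the path cases and the excluded profile $([1,2^{\mu_2},3],4)$ are removed, this remaining case forces $k\ge 5$, that is, at least three $L$-leaves; this surplus is exactly what the construction will consume.

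The target configuration is a tree in which some maximal-degree vertex $x$ carries two $L$-leaf children that are consecutive in the cyclic order around $x$. Once such an $x$ exists, a single decoration exchange applied to $u$ and to this consecutive pair $v,w$ turns $v$ into the new $\overline{L}$-leaf adjacent to $x$ (while $u$ becomes an $L$-leaf). Since glue and cut and decoration exchange both preserve the profile, $x$ still has degree $d$, so the resulting tree is nice. The whole problem therefore reduces to producing two consecutive $L$-leaf children at a maximal-degree vertex using at most two glue and cut operations.

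To build this configuration, fix a maximal-degree vertex $x$, so $\deg(x)=d\ge 3$, and let $T_1,\dots,T_d$ be the subtrees of $T$ hanging from the edges $e_1=xc_1,\dots,e_d=xc_d$ at $x$, listed in cyclic order. Each $T_i$ contains at least one leaf of $T$, and since $u$ is the only $\overline{L}$-leaf, at most one of the $T_i$ fails to contain an $L$-leaf; for $d\ge 3$ this forces two cyclically consecutive subtrees $T_i,T_{i+1}$ that both contain an $L$-leaf. The key mechanism is that a single glue and cut can replace the edge $e_i=xc_i$ by a direct $L$-leaf child of $x$ in the same corner: gluing an $L$-leaf of $T_i$ to an $L$-leaf lying outside $T_{i+1}$ creates a cycle through $x$, and cutting the cycle edge $xc_i$ reattaches $T_i$ through the glue edge while leaving a fresh $L$-leaf in the corner formerly occupied by $e_i$, with $\deg(x)$ unchanged. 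Performing this first on $e_i$ and then on $e_{i+1}$ deposits the two new $L$-leaves in adjacent corners, hence consecutive; choosing the second glue partner outside the already-processed part keeps the first new leaf untouched. This costs at most two glue and cut operations (fewer when a suitable $L$-leaf child is already present), so the full sequence fits the budget of two glue and cut operations and one decoration exchange.

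The main obstacle is the planar bookkeeping of the previous paragraph: one must track precisely into which corner of $x$ each newly created $L$-leaf is inserted and check that the two insertions land in consecutive corners and that the second operation does not displace the first. This is also where the hypotheses bite. The counting argument producing two consecutive $L$-containing subtrees needs $d\ge 3$, and gathering two $L$-leaves at a common vertex by glue and cut needs at least three $L$-leaves, since a glue and cut always deposits its two new $L$-leaves at the two \emph{distinct} endpoints of the cut edge; with only two $L$-leaves in total one can never assemble a consecutive pair at a single vertex. That is exactly the failure in $([1,2^{\mu_2},3],4)$, which has only two $L$-leaves and is handled separately in \cref{lem:removingTriangles2}.
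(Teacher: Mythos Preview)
Your overall strategy matches the paper's: dispose of the easy cases, reduce to $\mu_1=1$ with maximal degree $d\ge 3$ and $|L|\ge 3$, then manufacture two consecutive $L$-leaf children at a maximal-degree vertex via two glue-and-cut operations, and finish with one decoration exchange. The preliminary case analysis leading to $k\ge 5$ is correct.

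There is, however, a genuine gap in the construction when $d=3$. You pick an arbitrary maximal-degree vertex $x$ and argue that at most one of its $d$ subtrees lacks an $L$-leaf, hence two cyclically consecutive subtrees $T_i,T_{i+1}$ contain $L$-leaves. But your first glue-and-cut needs an $L$-leaf in a \emph{third} subtree $T_j$ with $j\ne i,i+1$: you glue an $L$-leaf of $T_i$ to one ``lying outside $T_{i+1}$'' (and necessarily outside $T_i$, for the cycle to pass through $x$). When $d=3$ and the remaining subtree $T_{i+2}$ contains only the $\overline L$-leaf, no such partner exists. Routing the first glue through $T_{i+1}$ instead does not save you: after cutting $e_i$, the only $L$-leaves outside corner $i+1$ are the new leaf $y$ at $x$ and whatever lies in $T_{i+2}$, so the second step would be forced to consume $y$, destroying the first of your two consecutive leaves.

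The paper closes exactly this gap by choosing the central vertex more carefully: when $d=3$ the tree has at least two degree-$3$ vertices (since $|L|\ge 3$ and $\mu_1=1$ give at least four leaves), and at most one of them can have a subtree devoid of $L$-leaves (namely the first vertex of degree $\ge 3$ on the path from the $\overline L$-leaf). Choosing any other degree-$3$ vertex guarantees that all three of its subtrees contain $L$-leaves, and then your two-step construction goes through. Adding this argument on the choice of $x$ is what is missing.
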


\begin{proof}
  If $\mu_1 = 0$, then $(T,L)$ is already nice. If $\mu_1 = 1$ and $k = 3$, then
  $(T,L)$ is also nice because $T$ is a path. Finally, if $\mu_1 =1$ and
  $k \ge 4$, then the assumption $(\mu,k) \neq ([1,2^{\mu_2},3], 4)$ is
  equivalent to $k \ge 5$, that is $|L| \ge 3$. The reconfiguration sequence is
  illustrated by an example in \cref{fig:niceTransfer}.

  Let $u$ be a vertex of $T$ of maximal degree such that $L$ intersects at least
  three connected components of $T \setminus u$. Such a vertex exists if $T$ has
  maximum degree at least four, in which case any vertex of maximum degree
  satisfies this condition because there is only one $\overline L$-leaf of
  $T$. If $T$ has maximum degree three, then it contains at least two vertices
  of vertices of degree three and at most one of them does not verify this
  condition.

  Let $t$ be the $\overline L$-leaf of $T$.
  Let $v,w,x$ be three leaves of $L$ in distinct connected
  components of $T - u$, such that the connected components containing $v$ and
  $w$ appear consecutively around $u$ (see \cref{fig:niceTransferA}). Perform
  the glue and cut operation that replaces $v$ and $w$ by an edge and cuts one
  of the edges incident to $u$ (see \cref{fig:niceTransferA}). One of the two
  leaves $y,z$ created by this operation is adjacent to $u$, say $y$ (see
  \cref{fig:niceTransferB}). Perform the glue and cut operation that replaces
  $x$ and $z$ by an edge and cuts the edge incident to $u$ and consecutive to
  $y$ (see \cref{fig:niceTransferB}). In the decorated plane tree $(T_f,L_f)$
  obtained after this operation (see \cref{fig:niceTransferD}), $u$ is
  adjacent to two consecutive leaves of $L_f$ and a decoration exchange on $t$ and these
  leaves results in a nice decorated plane tree.
  \begin{figure}[h!]
    \centering
    \begin{subfigure}{.48\linewidth}
      \centering
      \includegraphics{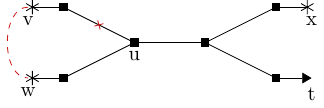}
      \caption{Original decorated plane tree $(T',L')$}\label{fig:niceTransferA}
    \end{subfigure}
    \hfill
    \begin{subfigure}{.48\linewidth}
      \centering
      \includegraphics{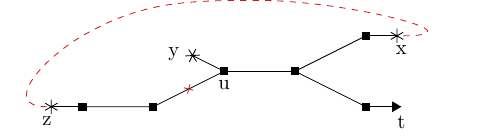}
      \caption{Decorated Plane tree obtained after one glue and cut
        operation}\label{fig:niceTransferB}
    \end{subfigure}

    \begin{subfigure}{.48\linewidth}
      \centering
      \includegraphics{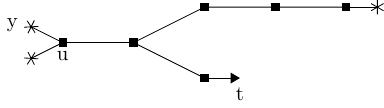}
      \caption{Decorated Plane tree obtained after another glue and cut
        operation}\label{fig:niceTransferC}
    \end{subfigure}
    \hfill
    \begin{subfigure}{.48\linewidth}
      \centering
      \includegraphics{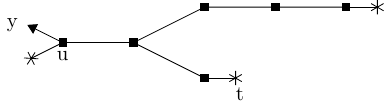}
      \caption{The nice decorated plane tree $(T_f, L_f)$ after a decoration
        exchange}\label{fig:niceTransferD}
    \end{subfigure}
    \caption{Obtaining a nice decorated plane tree when $\mu_1 + k \ge 6$. The
      glue and cut operations resulting in the next subfigure are represented in
      red: the glued half-edges are connected by a dashed path and the cut edge
      is crossed.}
    \label{fig:niceTransfer}
  \end{figure}
\end{proof}

\begin{lemma}\label{lem:removingTriangles2}
  Let $\tau$ be a path-like configuration with profile
  $(\mu, k) = ([1,2^{\mu_2},3], 4)$. There exists a sequence composed of $O(1)$
  powers of horizontal shears and powers of vertical shears that leads to
  path-like configuration corresponding to a nice decorated plane tree.
\end{lemma}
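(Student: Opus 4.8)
The plan is to work entirely on the decorated plane tree, using \cref{thm:dualTree} to translate between path-like configurations and their trees, and to reduce the statement to moving a single distinguished leaf. For the profile $([1,2^{\mu_2},3],4)$ the associated decorated plane tree has profile $([1,2^{\mu_2},3],2)$, so it is a \emph{tripod}: it has a unique vertex $x$ of degree $3$ (dual to the $3$-face), $\mu_2$ internal vertices of degree $2$ (dual to the hexagons), and exactly three leaves --- one $\overline L$-leaf $t$ (dual to the triangular $1$-face) and two $L$-leaves $p,q$ (dual to the two red half-edges). Writing $a$, $b$, $c$ for the number of degree-$2$ vertices on the arms ending in $t$, $p$, $q$ respectively, niceness of the tree is equivalent to $a=0$, or to having turned one of $p,q$ into the $\overline L$-leaf while it sits next to $x$.

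First I would record why the operations of the previous subsections do not close this case, which both explains the hypothesis of \cref{lem:removingTriangles} and pinpoints what is needed. Since there are only two $L$-leaves, the sole available glue and cut operation (\cref{lem:glueCutCompo}) must glue $p$ and $q$; the cycle it creates lies inside the $p$- and $q$-arms, so the operation leaves the $t$-arm untouched and preserves both $a$ and $b+c$, while keeping $t$ as the $\overline L$-leaf. The decoration exchange (\cref{lem:transferCompo}) needs two $L$-leaves adjacent to a common vertex, and in a tripod only $x$ can carry two leaves, so it applies exactly when $b=c=0$, i.e.\ when $a=\mu_2$. Hence glue and cut together with decoration exchange reach a nice tree precisely when $a=0$ or $a=\mu_2$, and the genuinely new cases are $0<a<\mu_2$, where one must change $a$ --- something no single vertical shear can do.

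To change $a$ I would use a power of a vertical shear, the extra ingredient flagged in the remark after \cref{lem:movesBetweenPathLike}. The plan is to select an \RG-cylinder running through the $3$-face $x$ and along the $t$-arm and to slide the triangular $1$-face down it until it sits next to $x$. Concretely, a power of a horizontal shear first brings the red (half-)edge bounding the triangle and a red edge incident to $x$ onto a common vertical cylinder; a power of the corresponding vertical shear then transports the triangle the whole length of that cylinder up to $x$, redistributing the hexagons among the arms but, by \cref{lem:profileIsInvariant}, creating no new singularities. In the spirit of \cref{lem:glueCutCompo} this intermediate surface need not be path-like, so I would bracket the vertical power by powers of horizontal shears and, if necessary, one corrective single vertical shear to recover a single \GB-path. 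Since a single power slides an arbitrary distance, the entire reconfiguration costs only $O(1)$ shear powers, independent of $\mu_2$, and lands either on $a=0$ (already nice) or on $a=\mu_2$, the latter finished by one decoration exchange (\cref{lem:transferCompo}).

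The main obstacle is the explicit verification of this middle step: one must exhibit the relevant \RG-cylinder, check that its shear-power indeed transports the triangle to $x$ while only permuting the remaining hexagons among the arms, and confirm that, after the horizontal clean-up, the surface is again path-like with a nice decorated plane tree. As with \cref{lem:glueCutCompo}, this is most safely carried out by tracking the tricoloured cubic graph --- and the cylinders of the intermediate, non-path-like surfaces --- through the whole sequence on an explicit figure. I would also dispatch the small cases $\mu_2\le 1$ by hand (there the tree is already nice or a single decoration exchange suffices) and verify the consecutiveness and planarity hypotheses needed to invoke the closing decoration exchange.
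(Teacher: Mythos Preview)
Your high-level strategy matches the paper's: the key extra ingredient in this stratum is a \emph{power} of a vertical shear along an \RG-component that drags the $1$-face along the $t$-arm until it sits next to the degree-$3$ vertex. Your motivational analysis of why glue-and-cut and decoration exchange alone cannot close the case is also correct and useful.

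Where your write-up differs from the paper is in execution, and that difference is what leaves the ``main obstacle'' open in your version. The paper first performs \emph{one} glue-and-cut to force an $L$-leaf directly onto $u$ (so, in your notation, one of $b,c$ becomes $0$). This normalisation is what makes the rest explicit: after a single horizontal shear power one can label the \GB-path $(1,2,\ldots)$ so that $\tau_R(1)=3$, and then the relevant vertical component is the concrete \RG-\emph{path} $P=(1,3,2,5,4,\ldots,2m+3,2m+2)$, where $m$ is your $a$. Applying $m$ vertical shears to $P$ gives, by a short induction on $\lfloor m/2\rfloor$, an explicit \GB-path on the same vertex set; in particular the result is \emph{already} path-like, and the $1$-face is already adjacent to the $3$-face, so no bracketing horizontal shears, corrective vertical shear, or closing decoration exchange are needed. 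Without the preliminary glue-and-cut your \RG-cylinder is not canonical, the intermediate surfaces are harder to track, and you are led to anticipate complications (non-path-like intermediates, correction steps, the $a=\mu_2$ alternative) that the paper's setup simply avoids. If you add that single normalising glue-and-cut first, the explicit verification you flag as the main obstacle becomes the short case analysis the paper carries out.
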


\begin{proof}
  The decorated plane tree $(T,L)$ associated to $\tau$ has one vertex $u$ of
  degree three, $\mu_2$ vertices of degree two, and three leaves, two of them
  belonging to $L$. After one glue and cut operation, we obtain a decorated
  plane tree $(T',L')$ in which an $L'$-leaf is adjacent to $u$. Let $t$ be the
  $\overline L'$-leaf and $m$ be number of vertices of degree two on the path
  between $u$ and $t$. Note that $(T',L')$ is nice if and only if $m=0$, thus
  our goal is to reduce $m$.

  Let $\tau'$ be some path-like configuration corresponding to $(T',L')$. By
  \cref{lem:glueCutCompo}, $\tau$ and $\tau'$ differ by $O(1)$ vertical shears
  and powers of horizontal shears. Label the vertices of $S^*(\tau')$ in the
  order on which they appear on the unique \GB-path. Up to performing a power of
  the horizontal shear, one can assume that $\tau'_R(1) = 3$, i.e. the only
  1-face in $S^*(\tau')$ contains no half-edges in its border but is adjacent to
  a green half-edge via the vertex labelled 1 (see
  \cref{fig:specialStrataEvena,fig:specialStrataOdda}).

  Thus, $(1,2,3, \dots 2m+2,2m+3, \dots 2\mu_2+2)$ is the unique \GB-path in
  $S^*(\tau')$ and $P= (1,3,2,5,4, \dots 2i+3, 2i+2, \dots 2m+3,2m+2)$ is a
  \RG-path in $S^*(\tau')$.

  We claim that after performing $m$ \RG-shears on $P$, we obtain a path-like
  configuration $\tau''$, whose \GB-path is the following sequence. If m is even
  (see \cref{fig:specialStrataEvenb}), then the \GB-path is:
  $$
  (2\mu_2+2, \dots 2m+4, m+3, m+2, (\underbrace{m+1-i, m+2-i,m+4+i,
    m+3+i}_{\text{for } 1 \le i \le m/2}), 1)
  $$
  If $m$ is odd (see \cref{fig:specialStrataOddb}), then the \GB-path is:
  $$
  (2\mu_2+2, \dots 2m+4,(\underbrace{m+i, m+3+i,m+3-i,
    m-i}_{\text{for } 1 \le i \le (m-1)/2}), 2m, 2m+3, 3, 1, 2m+2)
  $$

  \begin{figure}[h!]
    \centering
    \begin{subfigure}{\linewidth}
      \centering
      \includegraphics{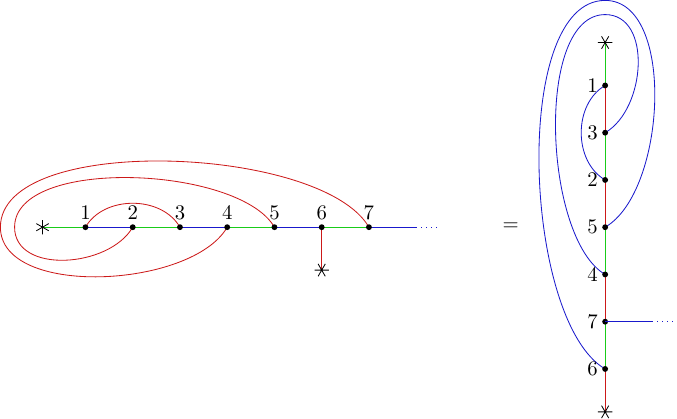}
      \caption{$S^*(\tau')$ is path-like and contains the \RG-path $1,3,2,5,4,
        \dots 2i+3, 2i+2, \dots 2m+3,2m+2$.}
      \label{fig:specialStrataEvena}
    \end{subfigure}

    \begin{subfigure}{\linewidth}
      \centering
      \includegraphics{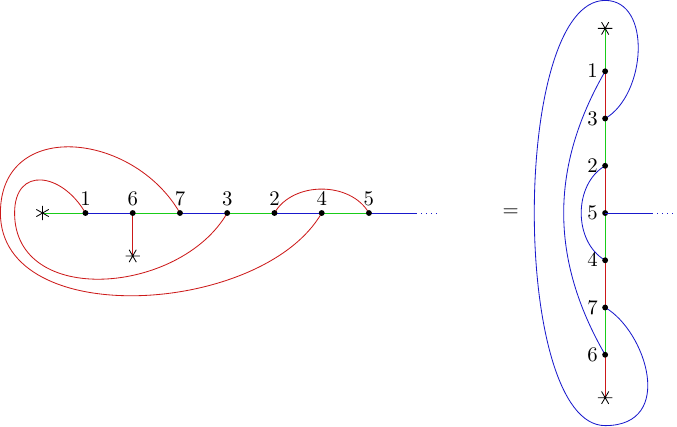}
      \caption{After $m$ \RG-shears, $S^*(\tau'')$ is still path-like and
        corresponds to a nice decorated plane tree}
      \label{fig:specialStrataEvenb}
    \end{subfigure}
    \caption{Obtaining a nice decorated plane tree when $(\mu,k) = ([1,2^{\mu_2},3],
      4)$. Even case, here m= 2.}
    \label{fig:specialStrataEven}
  \end{figure}

  \begin{figure}[h!]
    \centering
    \begin{subfigure}{\linewidth}
      \centering
      \includegraphics{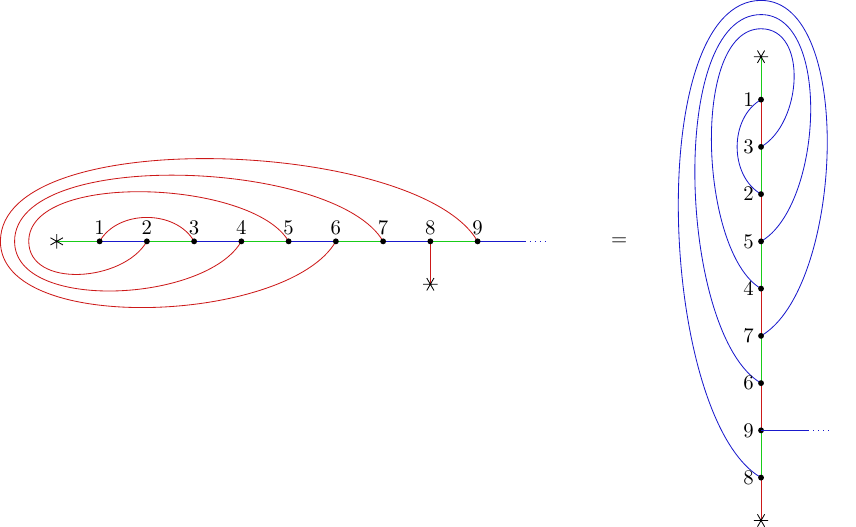}
      \caption{$S^*(\tau')$ is path-like and contains the \RG-path $1,3,2,5,4,
        \dots 2i+3, 2i+2, \dots 2m+3,2m+2$.}
      \label{fig:specialStrataOdda}
    \end{subfigure}

    \begin{subfigure}{\linewidth}
      \centering
      \includegraphics{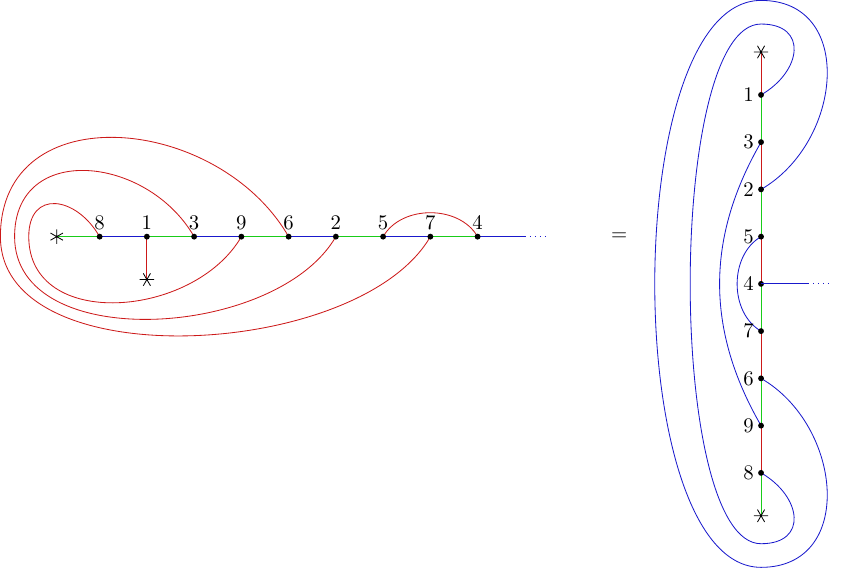}
      \caption{After $m$ \RG-shears, $S^*(\tau'')$ is still path-like and
        corresponds to a nice decorated plane tree}
      \label{fig:specialStrataOddb}
    \end{subfigure}
    \caption{Obtaining a nice decorated plane tree when $(\mu,k) = ([1,2^{\mu_2},3],
      4)$. Odd case, here m= 3.}
    \label{fig:specialStrataOdd}
  \end{figure}

  Each case can be easily checked by an induction on $\lfloor m/2\rfloor$, the
  key argument being that the order in which the vertices $1, \dots 2m+3$ are
  visited by the \GB-path of $S^*(\tau'')$ is monotone with respect to the
  distance to the middle of the \RG-path $P$. The 1-face of $S^*(\tau'')$,
  induced by the vertices $\{m, m+2, m+3\}$ if $m$ is even, respectively by
  $\{m+1, m+2, m+4\}$ if $m$ is odd, is adjacent to the 3-face of $S^*(\tau'')$
  via the red edge $(m \ m+3)$, respectively $(m+1 \ m+2)$. In other words, $t$
  is adjacent to $u$ in the nice decorated plane tree $(T'',L'')$ associated to
  $\tau''$, which concludes the proof.
\end{proof}

By combining \cref{lem:removingTriangles} and \cref{lem:removingTriangles2}
along with \cref{lem:transferCompo} and \cref{lem:glueCutCompo}, we directly
obtain the following.
\begin{corollary}\label{cor:removingTriangles}
  Let $\tau$ be a spherical path-like configuration. There exists a sequence
  composed of $O(1)$ powers of horizontal shears and powers of vertical shears
  that leads to path-like configuration corresponding to a nice decorated plane
  tree.
\end{corollary}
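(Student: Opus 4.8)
The plan is to combine the two preceding lemmas \cref{lem:removingTriangles} and \cref{lem:removingTriangles2} with the shear-realisation lemmas \cref{lem:glueCutCompo} and \cref{lem:transferCompo}, via a single case split on the profile. Since the notion of a \emph{nice} decorated plane tree is only defined for $\mu_1 \le 1$, I would first record that this is the relevant standing assumption. Let $\tau$ be a spherical path-like configuration with profile $(\mu, k)$ and let $(T,L)$ be its associated decorated plane tree, of profile $(\mu, k-2)$ by \cref{thm:dualTree}.

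First I would dispose of the exceptional profile. If $(\mu, k) = ([1, 2^{\mu_2}, 3], 4)$, then \cref{lem:removingTriangles2} applies directly and produces, by $O(1)$ powers of horizontal and vertical shears, a path-like configuration whose decorated plane tree is nice; this is exactly the conclusion sought, so nothing more is needed in this case.

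In the complementary case $(\mu, k) \neq ([1, 2^{\mu_2}, 3], 4)$, I would invoke \cref{lem:removingTriangles} to obtain a sequence of at most one decoration exchange and two glue-and-cut operations transforming $(T, L)$ into a nice decorated plane tree $(T', L')$. I would then translate each abstract tree operation into shears: by \cref{lem:glueCutCompo} a glue-and-cut is realised by at most two vertical shears and four powers of horizontal shears, and by \cref{lem:transferCompo} a decoration exchange is realised by at most two vertical shears and four powers of horizontal shears. Concatenating the shear sequences attached to these at most three tree operations yields a sequence of $O(1)$ powers of horizontal and vertical shears carrying $\tau$ to a path-like configuration whose decorated plane tree is $(T', L')$, hence nice.

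The only point demanding care — and what I expect to be the main (though modest) obstacle — is the interface between decorated plane trees and genuine square-tiled surfaces. A single decorated plane tree indexes an entire orbit of path-like configurations under the horizontal shear (\cref{thm:dualTree}), and during each glue-and-cut or decoration exchange the surface transiently leaves the path-like world before the second vertical shear restores it (compare \cref{fig:glueCut} and \cref{fig:decorationExchangeProof}). Both the alignment of the current configuration before an operation and the return to a path-like configuration afterwards are already built into \cref{lem:glueCutCompo} and \cref{lem:transferCompo}, so the concatenation is legitimate; it then only remains to note that the number of tree operations is bounded by an absolute constant independent of $k$, which keeps the total shear count $O(1)$.
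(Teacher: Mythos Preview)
Your proposal is correct and matches the paper's approach exactly: the paper states the corollary as a direct consequence of combining \cref{lem:removingTriangles} and \cref{lem:removingTriangles2} with \cref{lem:glueCutCompo} and \cref{lem:transferCompo}, which is precisely the case split and translation you carry out. Your extra paragraph on the tree/surface interface is a welcome clarification but not an additional argument beyond what those lemmas already encapsulate.
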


\subsection{Equivalence of nice decorated plane trees}\label{ssec:triangleFreeCaterpillar}
Let $\mu = [1^{\mu_1},2^{\mu_2}, 3^{\mu_3} \ldots]$ and
$k = \sum_i \mu_i (i - 2) + 4$, such that $\mu_1 \le 1$. To prove
\cref{thm:connectednessSphere}, we only need to prove that all nice decorated
plane trees with the profile $(\mu,k-2)$ are equivalent up to a sequence of glue
and cut operations.

We will proceed in two steps: first proving that any nice decorated plane tree
is equivalent up to a sequence of glue and cut operations to a nice decorated
plane tree $(T,L)$ where $T$ is a caterpillar, and then showing that all such
decorated plane trees are equivalent up to a sequence of glue and cut
operations.

\begin{figure}[!ht]
  \begin{center}%
    \includegraphics{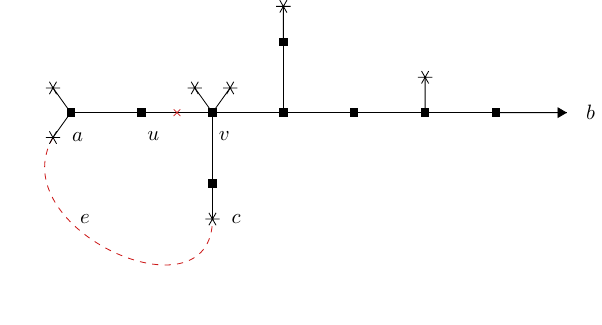}%
\end{center}
\caption{Extending the longest path of a decorated plane tree. The dashed edge
      represents the gluing of $L$-leaves while the edge that is cut is crossed.}
\label{fig:extendCaterpillar}
\end{figure}

\begin{lemma}\label{lem:treeToCaterpillar}
  Let $(T,L)$ be a nice decorated plane tree of profile $(\mu,k-2)$. There
  exists a sequence of at most $k$ glue and cut operations and that connects
  $(T,L)$ to a nice decorated plane tree $(T',L')$ with identical profile, where
  $T'$ is a caterpillar.
\end{lemma}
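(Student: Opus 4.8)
The plan is to reach a caterpillar by repeatedly using a single glue and cut operation to \emph{extend a longest path} of the tree, exactly as suggested by \cref{fig:extendCaterpillar}, until every internal vertex lies on it. Fix a longest path (a \emph{spine}) $P = p_0 p_1 \cdots p_d$ of $T$; since $P$ is longest, its two endpoints $p_0,p_d$ are leaves. The progress measure I would track is $D$, the number of leaves of $T$ at distance at least $2$ from $P$. Since a leaf at distance $\ge 2$ from $P$ forces an internal vertex off $P$, and conversely, we have $D = 0$ if and only if $T$ is a caterpillar. Moreover $D \le (\text{number of leaves of } T) - 2 = \mu_1 + (k-2) - 2 \le k-3$, because the two spine endpoints are leaves at distance $0$.

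As long as $D > 0$, I pick a leaf $q_m$ at distance $m \ge 2$ from $P$, with leg $p_i, q_1, \dots, q_m$ attaching to $P$ at $p_i$ (so $1 \le i \le d-1$), together with an endpoint of $P$, say $p_d$. I then perform the glue and cut that glues the leaves $q_m$ and $p_d$ (their parents $q_{m-1}$ and $p_{d-1}$ get joined, creating the cycle $p_{d-1} p_{d-2} \cdots p_i q_1 \cdots q_{m-1} p_{d-1}$) and cuts the attaching edge $p_i q_1$, placing a pendant $L'$-leaf on each of $p_i$ and $q_1$. Since a tree has no chords, one checks that the result is again a plane tree in which $P' = p_0 \cdots p_{d-1} q_{m-1} \cdots q_1 \ell$ (with $\ell$ the new leaf at $q_1$) is a path of length $d+m-1 > d$ with leaf endpoints, hence a legitimate new spine. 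The only vertices removed are $p_d$ and $q_m$, and the two created leaves lie at distance $\le 1$ from $P'$; every old deep leaf either sits in a branch attached at some $p_j$ with $j \le d-1$ (so its distance to $P' \supseteq \{p_0,\dots,p_{d-1}\}$ does not increase) or in a sub-branch of some absorbed $q_j$ (so its distance strictly decreases). Therefore $D$ drops by at least $1$ at each step, and the process terminates after at most $k-3 \le k$ glue and cut operations with a caterpillar.

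The part I expect to require genuine care, rather than the routine planar realization, is the interaction with the niceness hypothesis. First, glue and cut may only glue $L$-leaves, so I must choose both $q_m$ and $p_d$ in $L$. As $\mu_1 \le 1$ there is at most one $\overline{L}$-leaf, so at least one endpoint of $P$ lies in $L$ and can serve as $p_d$; and whenever $D>0$ there is always a \emph{deep} leaf in $L$, for if the unique $\overline{L}$-leaf were the only deep leaf then its neighbour $u$ (a vertex of maximal degree, by niceness) would have to be off $P$, and $\deg(u) \ge 3$ would then force a second deep leaf, lying in $L$, in another subtree of $u$. Second, the output must itself be \emph{nice}: since glue and cut preserves all vertex degrees and never touches the $\overline{L}$-leaf (it is not in $L$, and its incident leaf-edge is never the cut edge $p_i q_1$, which joins two internal vertices), the $\overline{L}$-leaf stays attached to the same maximal-degree vertex $u$ throughout. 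In the final caterpillar $u$ is internal, hence on the spine, so the resulting decorated plane tree is automatically nice, which completes the argument.
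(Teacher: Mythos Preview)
Your argument is correct and takes essentially the same approach as the paper: extend a maximal path by gluing one of its $L$-leaf endpoints to a deep leaf and cutting an edge of the resulting cycle, iterating until the tree is a caterpillar. The only notable difference is that the paper places the unique $\overline{L}$-leaf on $P$ from the outset (so every deep leaf automatically lies in $L$ and niceness is preserved for free), whereas you recover these facts via a short separate case analysis; the paper also cuts the last spine edge $uv$ rather than the attaching edge $p_i q_1$, but this is an inessential variation.
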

\begin{proof}
  The assumption $\mu_1 \le 1$ guarantees that there is at most one
  $\overline L$-leaf of $T$. Let $P$ be a maximal path, containing the
  $\overline L$-leaf if it exists. Let $a$ and $b$ be the two leaves at the
  extremities of $P$ (see \cref{fig:extendCaterpillar}), without loss of
  generality, we assume that $a \in L$. We prove that if $T$ is not a
  caterpillar, then we can increase the length of $P$ by performing one glue and
  cut operation.

  Let $c$ be a leaf at distance at least 2 from $P$. Let $Q$ be the path from
  $a$ to $c$ and $uv$ be the last edge of $Q$ belonging to $P$. Perform the glue
  and cut operation that replaces the leaves $a$ and $c$ by an edge $e$ and that
  removes the edge $uv$. It results in a longer path going from $b$ to $v$ via
  $P$, then to $e$ via $Q$ and finally to $u$ via $P$.
\end{proof}

We call \emph{spine} of a caterpillar its internal vertices, in the order they
appear on the path going from one extremity to the other. Let $(T^*,L^*)$ be the
nice decorated plane tree such that $T^*$ is a caterpillar whose spine
$(u_1, \dots u_n)$ is a sequence of non-increasing degree, in which all leaves
adjacent to $u_2, \dots u_n$ belong to $L^*$. Note that this definition leaves
out the ambiguity of the positions of the leaves around each vertex in the plane
embedding of $T^*$. Hence, we further require all $u_i$ to be \emph{cyclically
  ordered}, that is that all the $L^*$-leaves around $u_i$ appear after
$u_{i-1}$ (or $u_2$ for $i=1$) and consecutively in the anti-clockwise order
(see \cref{fig:canonicalCaterpillar}).
\begin{figure}[!ht]
  \centering
  \includegraphics{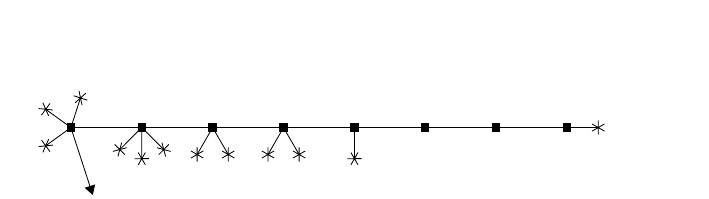}
  \caption{The nice decorated plane tree $(T^*,L^*)$ for the profile $([1, 2^3, 3,4^2,5^2],12)$}
  \label{fig:canonicalCaterpillar}
\end{figure}

\begin{lemma}\label{lem:caterpillarToCanonical}
  Let $(T,L)$ be a nice decorated plane tree with profile $(\mu,k-2)$, such that
  $T$ is a caterpillar. There exists a sequence of $O(k)$ glue and cut
  operations and decoration exchanges leading from $(T,L)$ to $(T^*,L^*)$.
\end{lemma}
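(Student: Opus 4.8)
The plan is to work entirely on decorated plane trees and to reach the canonical tree $(T^*,L^*)$ by first sorting the spine of the caterpillar into non-increasing degree and then placing the unique $\overline L$-leaf. Write the spine of $T$ as $u_1,\dots,u_m$, with pendant leaves hanging off each $u_i$. Since $(T,L)$ is nice there is at most one $\overline L$-leaf and all $L$-leaves are interchangeable, so the isomorphism type of a nice caterpillar is essentially determined by the sequence of spine degrees together with the position of the $\overline L$-leaf (the cyclic order of the identical $L$-leaves around a fixed spine vertex being irrelevant up to a bounded number of local adjustments). The target $(T^*,L^*)$ has non-increasing spine degrees, its $\overline L$-leaf on the maximal-degree end vertex $u_1$, and its leaves cyclically ordered. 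It therefore suffices to (i) sort the spine degrees, (ii) normalise the cyclic orders, and (iii) relocate the $\overline L$-leaf, each within $O(k)$ operations.

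First I would establish the central primitive: a single glue and cut realises a \emph{prefix reversal} of the spine. Concretely, gluing the pendant leaf of the endpoint $u_1$ to a pendant leaf of some $u_j$ creates the cycle $u_1u_2\cdots u_j u_1$, and cutting the spine edge $u_{j-1}u_j$ turns the caterpillar into the caterpillar with spine $(u_{j-1},u_{j-2},\dots,u_1,u_j,u_{j+1},\dots,u_m)$, with the two freshly created $L$-leaves sitting on $u_{j-1}$ and $u_j$; all vertex degrees are preserved. A symmetric computation gives suffix reversals. The only constraint is that the \emph{pivot} $u_j$ must carry a pendant $L$-leaf, which holds for every spine vertex of degree at least three and for the two spine endpoints. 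Verifying this primitive — tracing the surviving spine edges, checking that the result is again a caterpillar, and confirming the degrees and the $L$/$\overline L$ status of the new leaves — is the computational heart of the argument.

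With prefix and suffix reversals available, I would sort the spine by a selection / pancake-sort scheme: in each round, bring the largest degree not yet in place to its final position using $O(1)$ reversals. Since there are $\sum_{i\ge 3}\mu_i = O(k)$ spine vertices of degree at least three and each is placed in a constant number of reversals, this costs $O(k)$ glue and cut operations. The degree-$2$ spine vertices carry no pendant leaf and so cannot serve as reversal pivots; I would accommodate this by only ever cutting the spine at an edge incident to a leaf-bearing vertex, grouping each maximal run of degree-$2$ vertices with an adjacent leaf-bearing vertex and carrying such runs to the tail, where the non-increasing order places them anyway. The main obstacle I anticipate is exactly this scheduling: arranging the reversals so that every pivot lands on a leaf-bearing vertex while still realising the required permutation of the degrees and keeping the total number of operations linear in $k$.

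Finally, once the spine degrees are non-increasing, I would normalise the cyclic position of the indistinguishable $L$-leaves around each spine vertex with a bounded number of local glue and cut moves per vertex, and then use a single decoration exchange — justified by niceness, which guarantees the $\overline L$-leaf already hangs from a maximal-degree vertex — to move it onto $u_1$ exactly as required in $(T^*,L^*)$. Summing the $O(k)$ reversals of the sorting phase, the $O(k)$ local normalisations, and the $O(1)$ decoration exchanges yields the claimed bound of $O(k)$ operations.
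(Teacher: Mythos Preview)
Your primitive is correct: one glue and cut with pivot $u_j$ realises the prefix reversal $(u_1,\dots,u_{j-1})\mapsto(u_{j-1},\dots,u_1)$. But the two places you flag as ``anticipated obstacles'' are genuine gaps, and the paper's proof is organised precisely to avoid them.

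First, the restricted pancake sort is not completed. A prefix reversal brings $u_{j-1}$, not $u_j$, to the front; so to bring a chosen big vertex $v$ at position $i$ to position $1$ in one reversal you would have to pivot at $u_{i+1}$, which may be a degree-$2$ internal vertex with no pendant leaf at all. Your ``group each degree-$2$ run with an adjacent big vertex'' idea is not an algorithm: after one reversal the block structure and the positions of the runs change, and you give no invariant that guarantees every subsequent pivot lands on a leaf-bearing vertex while still realising the required permutation in $O(\sum_{i\ge 3}\mu_i)$ reversals. The paper sidesteps this entirely by proving a stronger primitive (\cref{cl:spineSwitch}): with \emph{three} glue and cut operations one can move \emph{any} chosen $u_i$ to the front, producing the spine $(u_i,u_1,\dots,u_{i-1},u_{i+1},\dots,u_n)$. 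This needs only that the current first spine vertex carry two $L$-leaves, which the paper arranges once at the outset; no hypothesis on $u_{i\pm1}$ is required, so degree-$2$ runs cause no trouble. Selection sort with this primitive is then immediate.

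Second, your step (ii), ``normalise the cyclic positions with a bounded number of local moves per vertex'', is asserted without a mechanism. It is not clear how a single glue and cut moves a pendant leaf from one arc between the two spine edges to the other without disturbing neighbouring vertices, nor why $O(1)$ moves per vertex should suffice. The paper again avoids a separate normalisation: the three-move primitive is designed so that the vertex it moves to the front becomes cyclically ordered in the sense required by $(T^*,L^*)$, and every previously placed, already cyclically ordered vertex remains so. Thus sorting and cyclic normalisation happen simultaneously, and no post-processing is needed beyond at most two decoration exchanges to handle the unique $\overline L$-leaf.
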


\begin{proof}[Proof of \cref{lem:caterpillarToCanonical}]
  Let $(T,L)$ be a nice decorated plane tree of profile $(\mu,k-2)$ with $T$ a
  caterpillar and $(u_1, \dots u_n)$ its spine. If $k+\mu_1=4$, there is only
  one nice decorated plane tree of profile $(\mu,k-2)$, so we are done. We now
  assume that $k+\mu_1 \ge 5$. We first prove a claim that allows us to perform
  some permutations of the spine. We will use it to sort the degree of the
  vertices on the spine and obtain $(T^*,L^*)$.
  \begin{claim}\label{cl:spineSwitch}
    Assume that all $u_1$ is adjacent to at least two $L$-leaves. Let $u_i$ be a
    vertex of the spine. By performing at most three glue and cut operations and
    at most two decoration exchanges, one can move $u_i$ to obtain a nice
    decorated plane caterpillar $(T',L')$ with spine
    $(u_i, u_1, \dots u_{i-1},\allowbreak u_{i+1}, \dots u_n)$. Moreover, $u_i$
    is cyclically ordered in $(T',L')$ and each other vertex $u_j$ stays
    cyclically ordered if it was the case in $(T,L)$ and all adjacent leaves in
    $(T,L)$ belonged to $L$.
  \end{claim}
  \begin{proof}
    Let $I$ be the set of indices for which $u_j$ is cyclically ordered and has
    all its adjacent leaves in $L$. The procedure is illustrated by
    \cref{fig:spineSwitch}.

    Denote $a$ and $b$ the first and last $L$-leaves adjacent to $u_1$ in the
    anti-clockwise order after $u_2$. Note that if $1 \in I$, then all its
    adjacent $L$-leaves appear between $a$ and $b$ in the anti-clockwise
    order. Likewise, for all $j \in I\setminus\{1\}$, all leaves adjacent to
    $u_j$ appear between $u_{j-1}$ and $u_{j+1}$ in the anti-clockwise order.

    Note that $u_n$ is adjacent to some $L$-leaf $c$: by assumption $(T,L)$ is
    nice, so if $\deg(u_n) =2 < \deg(u_1)$, $u_n$ is adjacent to one $L$-leaf;
    and if $\deg(u_n) \ge 3$, then $u_n$ is also adjacent to at least one
    $L$-leaf (see \cref{fig:spineSwitchA}). Take $c$ to be the first $L$-leaf
    after $u_{n-1}$ in the clockwise order.

    Perform the glue and cut operation that replaces the leaves $b$ and $c$ by
    an edge and that cuts the edge $u_{i-1}u_i$ to replace it by a leaf $d$
    adjacent to $u_{i-1}$ and a leaf $e$ adjacent to $u_i$ (see
    \cref{fig:spineSwitchA}). It produces a caterpillar with spine
    $(u_{i}, \dots u_n, u_1 \dots u_{i-1})$ (see
    \cref{fig:spineSwitchB}). Perform a glue and cut operation that replaces the
    leaves $a$ and $e$ by an edge and that cuts the edge $u_iu_{i+1}$ to replace
    it by a leaf $f$ adjacent to $u_i$ and a leaf $g$ adjacent to $u_{i+1}$ (see
    \cref{fig:spineSwitchB}). It produces a tree that is not a caterpillar
    anymore, in which $u_n$ and $u_i$ are adjacent to $u_1$ (see
    \cref{fig:spineSwitchC}). Finally, perform the glue and cut operation that
    replaces the leaves $c$ and $g$ by an edge and cuts the edge $u_1u_n$ (see
    \cref{fig:spineSwitchC}). It produces a caterpillar with the desired spine
    (see \cref{fig:spineSwitchD}).

    As it can be easily checked on \cref{fig:spineSwitch}, all vetices $u_j$
    with $j \in I$ are still cyclically ordered. It is however possible that
    $u_1$ is not yet cyclically ordered if it is adjacent to the
    $\overline{L}$-leaf, in which case $u_1$ is also adjacent to an $L$-leaf and
    two consecutive decoration exchanges with any other $L$-leaf can make $u_1$
    cyclically ordered, as desired.
    \begin{figure}[!ht]
      \begin{center}%
        \begin{subfigure}{\linewidth}
          \centering
          \includegraphics{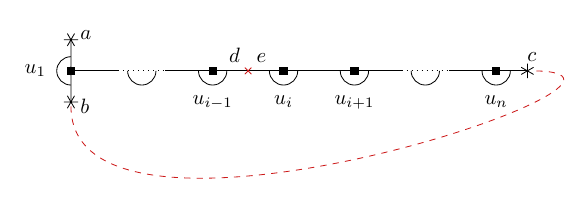}
          \caption{Original caterpillar with spine $(u_1, \dots u_n)$ and $u_1$
            adjacent to at least two $L$-leaves}\label{fig:spineSwitchA}
        \end{subfigure}
        \begin{subfigure}{\linewidth}
          \centering
          \includegraphics{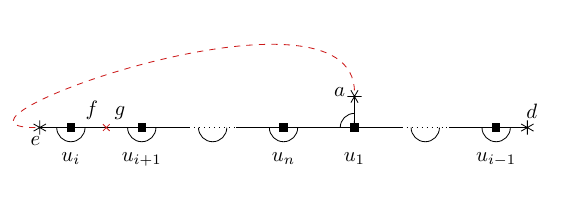}\\
          \caption{After one glue and cut operation, the spine becomes
            $(u_i,u_{i+1} \dots u_n, u_1, \dots u_{i-1})$ }\label{fig:spineSwitchB}
        \end{subfigure}
        \begin{subfigure}{\linewidth}
          \centering
          \includegraphics{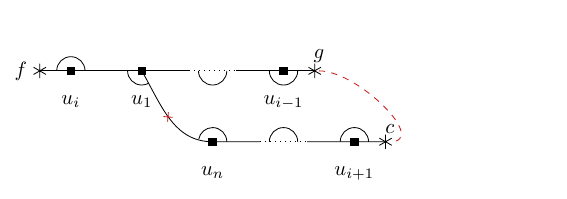}\\
          \caption{After one more glue and cut operation, the decorated plane
            tree is not a caterpillar anymore}\label{fig:spineSwitchC}
        \end{subfigure}
        \begin{subfigure}{\linewidth}
          \centering
          \includegraphics{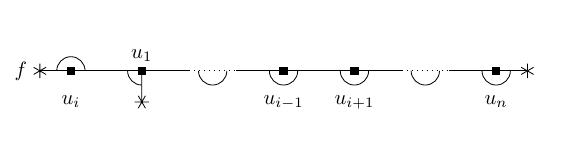}\\
          \caption{Final caterpillar with spine $(u_i, u_1, \dots u_{i-1},
            u_{i+1},\dots u_n)$}\label{fig:spineSwitchD}
        \end{subfigure}
      \end{center}%
      \caption{Moving $u_i$ to the front of the caterpillar. Again, the glue and
        cut operations resulting in the next subfigure are represented in red:
        the glued half-edges are connected by a dashed path and the cut edge is
        crossed. The arcs around the vertices represent the positions of the
        remaining $L$-leaves, for the vertices $u_j$ with $j\in I$.}
      \label{fig:spineSwitch}
    \end{figure}
  \end{proof}

  We first ensure that the one of the extremities of the spin is of degree at
  least three. If that is not the case, each extremity is adjacent to an
  $L$-leaf, say $a$ and $b$ respectively. Since $k + \mu_1\ge 5$, there is a
  internal vertex $u$ that is adjacent to an $L$-leaf. Perform the glue and cut
  operation that replaces $a$ and $b$ by an edge and cuts an edge incident to
  $u$. It produces a caterpillar in which $u$ is adjacent to two $L$-leaves and
  is placed at one extremity of the spin.

  Let $v_1, \dots v_p$ be all the vertices of degree at least three in $T$,
  ordered such that for all $i$, $\deg(v_i) \le \deg(v_{i+1 })$ and for all $i <
  p$, the leaves adjacent to $v_i$ all belong to $L$.

  We can apply \cref{cl:spineSwitch} successively to each of the $v_i$. This
  results in $(T^*,L^*)$. The corresponding reconfiguration sequence has length
  $O(\sum_i \mu_i) = O(k)$
\end{proof}

\subsection{Proof of \cref{lem:movesBetweenPathLike} and
  \cref{thm:connectednessSphere}}
\label{ssec:movesBetweenPathLike}
\cref{thm:connectednessSphere} from the introduction follows directly from
\cref{lem:movesToPathLike} that was proved in \cref{sec:connectingtoPathLike}
and \cref{lem:movesBetweenPathLike} that we prove now.

\begin{proof}[Proof of \cref{lem:movesBetweenPathLike}]
  Let $\tau$ be a path-like square-tiled spherical surface with profile
  $(\mu,k)$. By \cref{cor:removingTriangles}, $\tau$ is connected by a sequence
  of $O(1)$ powers of cylinder shears to a path-like square-tiled surface
  $\tau'$ that corresponds to a nice decorated plane tree $(T', L')$.

  Let $(T^*,L^*)$ be the decorated plane tree of profile $(\mu,k-2)$ defined
  before \cref{lem:caterpillarToCanonical}. By \cref{thm:dualTree}, there is a
  path-like configuration $\tau^*$ with profile $(\mu,k)$ that maps to
  $(T^*,L^*)$.

  By \cref{lem:treeToCaterpillar,lem:caterpillarToCanonical}, $(T',L')$ can be
  connected to $(T^*,L^*)$ via $O(k)$ glue and cut operations and decoration
  exchanges. Hence, $\tau$ and $\tau^*$ are equivalent up to $O(k)$ powers of
  cylinder shears by \cref{lem:glueCutCompo} and \cref{lem:transferCompo}. This
  is true for all path-like square-tiled surfaces in $ST_{quad}(\mu,k)$, which
  concludes the proof of \cref{lem:movesBetweenPathLike}.
\end{proof}

Combined with \cref{lem:movesToPathLike}, this proves that any two spherical
square-tiled surfaces of profile $(\mu,k)$ with $\mu_1 \le 1$ are connected up
to $O(k)$ powers of cylinder shears, thereby concluding the proof of
\cref{thm:connectednessSphere}.

\section{Hyperelliptic square-tiled surfaces}
\label{sec:hyperellipticSquareTiledSurfaces}

This section focuses on the hyperelliptic Abelian components. We first give in
\cref{ssec:hyperellepticComponents} a more precise description of the quotient
of such square-tiled surfaces by the hyperelliptic involution. Then in
\cref{ssec:halfShears} we explain the effect of a cylinder shear on the quotient
by the hyperelliptic involution. In \cref{ssec:hyperellipticPathLike} we show
how hyperelliptic Abelian square-tiled surfaces can be connected to path-like
configurations using cylinder shears. Finally, we conclude the proof of
\cref{thm:connectednessHyperelliptic} in \cref{ssec:connectednessHyperelliptic}
by reducing to \cref{thm:connectednessSphere}.

\subsection{Hyperelliptic components}\label{ssec:hyperellepticComponents}
Following~\cite{KontsevichZorich2003} we define the subsets
$\ST_{Ab}^{hyp}([2^{\mu_2}, (2g)^2])$ and $\ST_{Ab}^{hyp}([2^{\mu_2}, 4g-2])$ of
respectively $\ST_{Ab}([2^{\mu_2}, (2g)^2])$ and $\ST_{Ab}([2^{\mu_2}, 4g-2])$
(recall from \cref{ssec:introSquareTiledSurfaces} that we use the notation
$\ST(\mu)$ instead of $\ST(\mu,k)$ when $k = 0$, which is always the case for
Abelian square-tiled surfaces). These subsets correspond to the square-tiled
surfaces that belong to the so-called hyperelliptic components of the moduli
space of Abelian differentials. By
\cref{prop:connectedComponentsAndConnectedComponents} proven at the end
of~\cref{ssec:AbelianAndQuadraticDifferentials}, these subsets are invariant
under cylinder shears. However, we propose in this subsection an alternative
combinatorial proof.

Let $\tau = (\tau_R, \tau_G, \tau_B) \in (S_n)^3$ be a square-tiled surface in
some stratum $\ST(\mu, k)$. An \emph{automorphism} of
$\tau$ is a permutation $\alpha \in S_n$ that normalises simultaneously $\tau_R$,
$\tau_G$ and $\tau_B$ that is : $\alpha \tau_i \alpha^{-1} = \tau_i$ for
any colour $i \in \{R,G,B\}$. We denote $\Aut(\tau)$ the automorphism group of $\tau$.

Let $H$ be a subgroup of $\Aut(\tau)$. We define the quotient square-tiled surface
$\tau' = \tau / H$ whose set of squares are the orbits of $H$ on $[n]$
and $(\tau_R', \tau_G', \tau_B')$ are the induced action on the orbits.
Note that when taking the quotient, one needs to choose a labelling of
the $H$-orbits in $[n]$ with $[n']$. A canonical
choice consists in ordering these orbits according to the minimal element.

A square-tiled surface $\tau$ of genus $g \ge 2$ is \emph{hyperelliptic} if it
admits an automorphism $\alpha$ which is of order 2 and such that the quotient
$\tau / \langle \alpha \rangle$ has genus 0.

Let $g \ge 2$ and $\mu_2 \ge 0$.
We define $\ST_{Ab}^{hyp}([2^{\mu_2}, 4g-2])$ to be the subset of
$\ST_{Ab}([2^{\mu_2}, 4g-2])$ that are hyperelliptic and
$\ST_{Ab}^{hyp}([2^{\mu_2}, (2g)^2])$ to be the subset of
$\ST_{Ab}([2^{\mu_2}, (2g)^2])$ that are hyperelliptic and such
that the hyperelliptic involution exchanges the two singularities
of degree $2g$.

It is a standard result that a translation or half-translation surface of genus
$g \ge 2$ admits at most one hyperelliptic involution. Furthermore, if it exists
it is central in the automorphism group (i.e. all other automorphisms commute
with the hyperelliptic involution). These two facts follow from the result that
for a Riemann surface $X$, the induced action on homology gives rise to an
injective map $\Aut(X) \to \Sp(H_1(X; \bZ))$ and the image of the hyperelliptic
involution is $-Id$, see~\cite[Theorem~6.8]{FarbMargalit2011}.

We warn the reader that for the profile $\mu=[2^{\mu_2}, (2g)^2]$ the condition that
the hyperelliptic involution exchanges the two singularities of degree $2g$ is
important. More precisely, there exists hyperelliptic square-tiled surfaces
in $\ST_{Ab}([2^{\mu_2}, (2g)^2])$ whose quotient belongs to a spherical
square-tiled surface in some strata $\ST_{quad}([1^{\mu'_1},\allowbreak
2^{\mu'_2}, (g)^2], k)$, see \cref{fig:exampleHyperellipticNoExchange}.
\begin{figure}[!ht]
  \begin{subfigure}{.55\textwidth}
    \centering \includegraphics{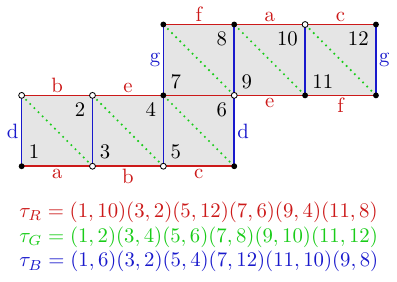}
    \caption{A surface in $\ST_{Ab}([6^2])$ with hyperelliptic involution
      $\alpha=(1,4)(2,3)(5,6)(7,12)(8,11)(9,10)$.}
    \label{sfig:hyp66}
  \end{subfigure}
  \hspace{.05\textwidth}
  \begin{subfigure}{.4\textwidth}
    \centering \includegraphics{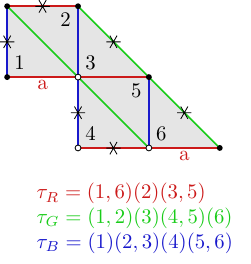}
    \caption{The quotient of \cref{sfig:hyp66} in $\ST_{quad}([3^2], 6)$.}
  \end{subfigure}
  \caption{A hyperelliptic square-tiled surface in $\ST_{Ab}([6^2])$ not in
    $\ST_{Ab}^{hyp}([6^2])$: the quotient belongs to $\ST_{quad}([3^2], 6)$. }
  \label{fig:exampleHyperellipticNoExchange}
\end{figure}

\begin{lemma}\label{lem:hyperellipticAbelianQuotient}
  For $g \ge 2$ and $\mu_2 \ge 0$, the quotient by the hyperelliptic
  involution induces the bijections
  \begin{equation}\label{eq:hyperellipticQuotient1}
    \ST^{hyp}_{Ab}([2^{\mu_2}, 4g-2])
    \to
    \bigcup_{\substack{k' + \mu'_1 = 2g+1 \\ \mu'_1 + 2 \mu'_2 = \mu_2}}
    \ST_{quad}([1^{\mu'_1}, 2^{\mu'_2}, 2g-1], k').
  \end{equation}
  and
  \begin{equation}\label{eq:hyperellipticQuotient2}
    \ST^{hyp}_{Ab}([2^{\mu_2}, (2g)^2])
    \to
    \bigcup_{\substack{k' + \mu'_1 = 2g + 2\\ \mu'_1 + 2 \mu'_2 = \mu_2}}
    \ST_{quad}([1^{\mu'_1}, 2^{\mu'_2}, 2g], k').
  \end{equation}
\end{lemma}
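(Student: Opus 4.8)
The plan is to exhibit an explicit inverse to the quotient map --- the canonical orientation double cover --- and to invoke the uniqueness of the hyperelliptic involution to conclude that the two constructions are mutually inverse. I will carry out the details for \eqref{eq:hyperellipticQuotient1}; the profile $[2^{\mu_2},(2g)^2]$ is handled identically, the only difference being that the exchange hypothesis forces the two singularities of degree $2g$ to form a free orbit of $\alpha$, so that they descend to a single singularity of the \emph{same} cone angle $2g\pi$ rather than to two singularities of halved angle $g\pi$ --- this is exactly what separates the target stratum (containing a part $2g$) from the stratum with two parts $g$ realised in \cref{fig:exampleHyperellipticNoExchange}. A recurring principle is that a cone point of $S(\tau)$ is a branch point of the quotient if and only if its cone angle is an odd multiple of $\pi$ (going once around reverses the horizontal direction field), while points of cone angle an even multiple of $\pi$ have two preimages.

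First I would check that the quotient is well defined and lands in the asserted union. Let $\tau\in\ST^{hyp}_{Ab}([2^{\mu_2},4g-2])$ with hyperelliptic involution $\alpha$ and set $\sigma=\tau/\langle\alpha\rangle$; by definition $\sigma$ has genus $0$. Since $\alpha$ normalises $\tau_R,\tau_G,\tau_B$ it permutes vertices, edge--midpoints and face--centres of the triangulation $S(\tau)$, and Riemann--Hurwitz for a degree-$2$ cover of a genus-$0$ base gives exactly $2g+2$ fixed points. Away from them the quotient is a local isometry (cone angles preserved) and at them the angle is halved. The unique singularity of angle $(4g-2)\pi$ has an angle distinct from all others, hence is fixed and descends to the unique part $2g-1$ of the quotient profile; the remaining $2g+1$ branch points sit at regular points (angle $2\pi$) and descend to singularities of angle $\pi$, those at a vertex becoming degree-$1$ vertices (counted by $\mu'_1$) and those at an edge--midpoint becoming folded edges (counted by $k'$). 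This yields $\mu'_1+k'=2g+1$, which by \eqref{eq:squareTiledSurfaceEulerCharacteristic} is precisely the spherical condition for $[1^{\mu'_1},2^{\mu'_2},2g-1]$, while the $\mu_2$ marked regular points split into the $\mu'_1$ fixed ones and $\mu'_2$ free pairs, giving $\mu'_1+2\mu'_2=\mu_2$. Since the profile carries the odd part $2g-1$, the criterion of \cref{sssec:cubicEdgeTricoloredCubicGraphs} forbids $\sigma$ from being Abelian, so $\sigma$ is quadratic and lies in the right term of the union.

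Next I would construct the inverse. Given a spherical $\sigma\in\ST_{quad}([1^{\mu'_1},2^{\mu'_2},2g-1],k')$ in the union, I form its canonical orientation double cover $\widehat\sigma$: combinatorially, the connected $\bZ/2\bZ$-cover of $S(\sigma)$ whose monodromy records the parity of orientation-reversing gluings crossed, equivalently the lift of $(\sigma_R,\sigma_G,\sigma_B)$ to a fixed-point-free triple on two sheets. The cover is connected exactly because $\sigma$ is genuinely quadratic (its dual graph is non-bipartite or carries a half-edge, by \cref{sssec:cubicEdgeTricoloredCubicGraphs}), the two sheets provide the $\tau$-invariant bipartition making $\widehat\sigma$ Abelian, and the sheet-swap $\alpha$ has $\widehat\sigma/\langle\alpha\rangle=\sigma$ of genus $0$, so $\alpha$ is a hyperelliptic involution. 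Reading the branching backwards --- the part $2g-1$ lifts to the part $4g-2$, each of the $\mu'_1+2\mu'_2=\mu_2$ small singularities lifts to marked regular points, and neither folded edges nor degree-$1$ vertices create extra singularities upstairs --- gives $\widehat\sigma\in\ST^{hyp}_{Ab}([2^{\mu_2},4g-2])$. That cover-then-quotient returns $\sigma$ is immediate; for quotient-then-cover, the orientation double cover of $\tau/\langle\alpha\rangle$ is canonically $(\tau,\alpha)$ because a surface of genus $g\ge 2$ carries at most one hyperelliptic involution, which is moreover central (\cite[Theorem~6.8]{FarbMargalit2011}). Hence the two maps are mutually inverse on isomorphism classes, and disjointness of the union is clear since distinct $k'$ force distinct quadratic strata.

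The main obstacle I expect is the honest combinatorial construction of the orientation double cover \emph{directly on the triple of involutions}, together with the proof that it is connected and Abelian with the stated profile; this is where surjectivity onto the \emph{full} union is won or lost. Concretely, the delicate local bookkeeping is distinguishing a branch point at a triangulation vertex (producing a degree-$1$ vertex, $\mu'_1$) from one at an edge--midpoint (producing a folded edge, $k'$): one must verify that every admissible splitting $\mu'_1+k'=2g+1$ is realised by some $\tau$ and matches the index of the union, which is the substantive step beyond the Euler--characteristic and Riemann--Hurwitz consistency checks already sketched above.
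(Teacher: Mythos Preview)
Your approach is essentially the paper's: verify that the quotient lands in the indicated union of spherical quadratic strata, then invert via the canonical orientation double cover. The paper's own proof is very brief --- it checks that the big singularity descends correctly, invokes Euler's formula for $k'+\mu'_1$, and then appeals to the ``standard construction'' of the Abelian double cover of a quadratic differential for bijectivity --- whereas you spell out Riemann--Hurwitz, the vertex-versus-edge-midpoint dichotomy for branch points, and the uniqueness of the hyperelliptic involution. All of this is correct and adds useful detail.

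One point to clean up: your final paragraph frames surjectivity as requiring that ``every admissible splitting $\mu'_1+k'=2g+1$ is realised by some $\tau$''. That is not what surjectivity demands, and it is not something you need to check. Once you have shown that the double-cover map $\sigma\mapsto\widehat\sigma$ is well defined into $\ST^{hyp}_{Ab}([2^{\mu_2},4g-2])$ and that quotient and cover are mutually inverse on isomorphism classes, bijectivity onto the union is automatic; whether any particular term of the union happens to be empty is irrelevant to the statement. The genuine content you flag --- connectedness of the cover (equivalent to $\sigma$ being genuinely quadratic, which holds since the profile contains the odd part $2g-1$) and reading the profile of $\widehat\sigma$ --- you have already handled correctly earlier in the proposal.
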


\begin{proof}
  In the first case, the hyperelliptic involution preserves the singularity of
  degree $4g-2$. The image of this singularity in the quotient by the
  hyperelliptic involution is a singularity of degree $2g-1$. By Euler's
  characteristic computation, one obtains that $k' + \mu'_1 = 2g + 1$.

  In the second case, by definition, the hyperelliptic involution exchanges the
  two singularities of degree $2g$. The image of this pair of singularities in
  the quotient by the hyperelliptic involution is a singularity of degree
  $2g$. Again, by Euler's characteristic computation, one obtains that
  $k' + \mu'_1 = 2g+2$.

  This proves that the quotient by the hyperelliptic involution indeed belongs to
  the right hand sides of~\eqref{eq:hyperellipticQuotient1}
  and~\eqref{eq:hyperellipticQuotient2} respectively.

  We now briefly explain why these maps are bijections. This follows from the
  standard construction that to any quadratic square-tiled surface, one can
  associate a unique Abelian square-tiled surface and an involution such that
  the quotient gives back the quadratic differential.
\end{proof}

The following lemma is a consequence of a result of Kontsevich and
Zorich~\cite[Section~2.1]{KontsevichZorich2003}. Alternatively, a direct
combinatorial proof follows immediately from
\cref{lem:shearQuotientinHyperelliptic}, stated in the next subsection after
defining half cylinder shears.

\begin{lemma}
  Let $\mu=[2^{\mu_2}, 4g-2]$ or $\mu=[2^{\mu_2}, (2g)^2]$ for some $g \ge 2$ and $\mu_2 \ge 0$.
  Then $\ST^{hyp}_{Ab}(\mu)$ is preserved by cylinder shears.
\end{lemma}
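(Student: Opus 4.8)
The plan is to deduce the statement from the classification of connected components of strata of Abelian differentials together with \cref{prop:connectedComponentsAndConnectedComponents}. The key observation is that $\ST^{hyp}_{Ab}(\mu)$ is, by its very definition, the set of square-tiled surfaces whose associated translation surface lies in the \emph{hyperelliptic connected component} of the relevant stratum $\cH(\kappa)$. Recall that for $\mu = [2^{\mu_2}, 4g-2]$ one has $\kappa = (2g-2)$ and for $\mu = [2^{\mu_2}, (2g)^2]$ one has $\kappa = ((g-1)^2)$, using $\kappa_i = \mu_{2i+2}$ (so that the parameter $\mu_2$ plays no role in the stratum). Thus the whole statement reduces to showing that cylinder shears preserve membership in a fixed connected component of $\cH(\kappa)$, which is exactly what \cref{prop:connectedComponentsAndConnectedComponents} provides.

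First I would recall from~\cite[Section~2.1]{KontsevichZorich2003} that each of $\cH(2g-2)$ and $\cH((g-1)^2)$ contains a distinguished hyperelliptic connected component $\cH^{hyp}$, characterised geometrically by the existence of an involution with genus-$0$ quotient that fixes (resp.\ exchanges) the singularity, resp.\ pair of top-degree singularities. By \cref{lem:isomorphismAbelian}, this geometric description matches the combinatorial definition of $\ST^{hyp}_{Ab}(\mu)$ recalled above, including the exchange condition imposed in the profile $[2^{\mu_2}, (2g)^2]$. Being a connected component, $\cH^{hyp}$ is in particular open and closed inside $\cH(\kappa)$.

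Next I would invoke \cref{lem:cylinderShearIsAPath-Abelian}: a cylinder shear on an Abelian square-tiled surface $\tau$ yields a surface $\tau'$ in the same stratum $\cH(\kappa)$, joined to $\tau$ by a continuous path lying inside $\cH(\kappa)$. Equivalently, by \cref{prop:connectedComponentsAndConnectedComponents}, $\tau$ and $\tau'$ lie in the same connected component of the stratum. Hence if $\tau \in \ST^{hyp}_{Ab}(\mu)$, so that $\tau \in \cH^{hyp}$, then $\tau'$ lies in $\cH^{hyp}$ as well, i.e.\ $\tau' \in \ST^{hyp}_{Ab}(\mu)$. This establishes that $\ST^{hyp}_{Ab}(\mu)$ is preserved by cylinder shears.

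The main obstacle is the identification carried out in the second paragraph: one must be certain that the purely combinatorial definition of $\ST^{hyp}_{Ab}(\mu)$ singles out exactly $\cH^{hyp}$, and not some strictly larger set of surfaces that merely carry a genus-$0$ involution. The delicate point is precisely the exchange condition for $[2^{\mu_2}, (2g)^2]$: as \cref{fig:exampleHyperellipticNoExchange} illustrates, there exist surfaces in $\ST_{Ab}([2^{\mu_2}, (2g)^2])$ admitting a genus-$0$ involution that does \emph{not} exchange the two top singularities, whose quotient lands in a different quadratic stratum, so that these surfaces fall outside $\cH^{hyp}$. A self-contained alternative that avoids quoting~\cite{KontsevichZorich2003} would instead track the hyperelliptic involution through the shear directly, showing that the shear descends to a \emph{half cylinder shear} on the (spherical) quotient; since half shears keep the quotient of genus $0$, hyperellipticity is preserved. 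This is the route realised through \cref{lem:shearQuotientinHyperelliptic} once half cylinder shears have been introduced.
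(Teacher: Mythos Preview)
Your proposal is correct and matches the paper's approach: the paper does not write out a proof but states that the lemma follows from \cite[Section~2.1]{KontsevichZorich2003} together with \cref{prop:connectedComponentsAndConnectedComponents}, and notes the alternative combinatorial route via \cref{lem:shearQuotientinHyperelliptic}. You have spelled out both routes, including the identification of the combinatorial definition of $\ST^{hyp}_{Ab}(\mu)$ with the hyperelliptic component and the delicate exchange condition in the $(2g)^2$ case.
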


\begin{remark}\label{rem:quadratic}
  Let us mention that for quadratic square-tiled surfaces, there also exist hyperelliptic
  connected components, see~\cite{Lanneau2008}. They correspond exactly to quadratic
  square-tiled surfaces in genus $g \ge 1$ that are hyperelliptic and whose quotient
  belongs to $\ST_{quad}(1^{\mu_1}, 2^{\mu_2}, a, b)$ where $a, b \ge 2$. That is, instead
  of having a single singularity $d \ge 3$ as in \cref{lem:hyperellipticAbelianQuotient}
  on the sphere, there are two.

  However, we were not able to apply the techniques we develop in this article
  in order to show the connectedness of $\ST^{hyp}_{quad}(\mu)$.
\end{remark}

\subsection{Half cylinder shears}\label{ssec:halfShears}
At first glance, \cref{lem:hyperellipticAbelianQuotient} and
\cref{conj:squareTiledSurfacesConnectedComponents} seem in
contradiction. Namely, from~\eqref{eq:hyperellipticQuotient1}, the quotient by
the hyperelliptic involution maps the subset of square-tiled surfaces
$\ST^{hyp}_{Ab}([2^{\mu_2}, 4g-2])$ to a disjoint union of
$\ST_{quad}([1^{\mu'_1}, 2^{\mu'_2}, 2g-1], k')$. And
\cref{conj:squareTiledSurfacesConnectedComponents} asserts that the left side
of~\eqref{eq:hyperellipticQuotient1} is connected under cylinder shears, while
the right side is obviously not by \cref{lem:profileIsInvariant}. The reason is
that if $\tau$ is a square-tiled surface in $\ST^{hyp}_{Ab}([2^{\mu_2}, 4g-2])$
and $\tau'$ its quotient by the hyperelliptic involution then a cylinder shear
in $\tau$ does not necessarily correspond to a cylinder shear in $\tau'$. One
sometimes obtains what we call a \emph{half cylinder shear} that modifies the
profile and which we study now.

Let $\tau$ be a square-tiled surface and $c$ a \GB-cycle that separates the
square-tiled surface such that one of the connected components of the complement
of $c$ contains only hexagons, $1$-faces and red half-edges. In more a
combinatorial way, let $c = c_1 \sqcup c_2$ the decomposition into two
$\tau_B \circ \tau_G$-orbits. We ask that $c_1$ is stable under $\tau_R$ and
that the faces bounded by $c$ together with the red edges with ends in $c_1$ are
only hexagons and either two half-edges or two $1$-faces. The horizontal
\emph{half cylinder shear} along $c$ is obtained by changing $\tau_R$ along
$c_1$ as follows (see also \cref{fig:halfShear})
\[
\tau'_R(i) =
\left\{
\begin{array}{ll}
\tau_G \circ \tau_B \circ \tau_R (i) & \text{if $i \in c_1$} \\
\tau_R(i) & \text{otherwise.}
\end{array}
\right.
\]
We similarly define vertical half cylinder shears on \RG-cycles.

\begin{figure}[h]
  \begin{center}
    \includegraphics{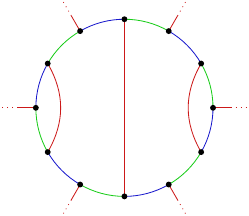}
    \hfill
    \includegraphics{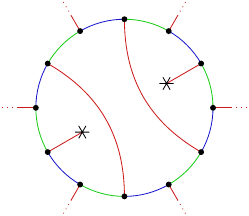}
    \hfill
    \includegraphics{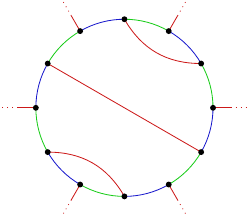}
  \end{center}
  \caption{A cylinder shear as the composition of two half cylinder shears.}
  \label{fig:halfShear}
\end{figure}
Similarly to the case of the cylinder shears, half cylinder shears induce a
natural bijection on the vertices of $S^*(\tau)$, that we will not define here.

\begin{lemma} \label{lem:shearQuotientinHyperelliptic} Let $\tau$ be a
  square-tiled surface of genus $g \ge 2$ in either
  $\ST^{hyp}_{Ab}([2^{\mu_2}, 4g-2])$ or $\ST^{hyp}_{Ab}([2^{\mu_2},
  (2g)^2])$. Let $\tau'$ be its quotient by the hyperelliptic involution.  Let
  $c$ be a \GB-cycle in $\tau$ and $c'$ its image in $\tau'$.

  If $c'$ belongs to a horizontal cylinder bounded by a \GB-path $c''$
  (equivalently if the height $h$ in the edge decoration $(w,h)$ of the weighted
  stable graph $\Gamma(\tau')$ is a half integer) then the quotient of
  $\Shear_{c,B,G}(\tau)$ by the hyperelliptic involution is isomorphic to
  $\Shear_{c'',B,G}(\tau')$.

  If $c'$ belongs to a horizontal cylinder bounded by a \GB-cycle $c''$
  isolating a component of $S^*(\tau')$ made only of hexagons and either red
  half-edges or 1-faces, then the quotient by the hyperelliptic involution of
  $\Shear_{c,B,G}(\tau)$ is isomorphic to the horizontal half cylinder shear
  along $c''$ in $\tau'$.
\end{lemma}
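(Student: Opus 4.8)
The plan is to exploit that the hyperelliptic involution $\alpha$ is central in $\Aut(\tau)$ and commutes with each of $\tau_R,\tau_G,\tau_B$, so that the quotient map $\pi\colon[n]\to[n]/\langle\alpha\rangle$ intertwines the three involutions, $\pi\circ\tau_X=\tau'_X\circ\pi$ for $X\in\{R,G,B\}$. Since $\alpha$ preserves \GB-components, the image $c'=\pi(c)$ is a \GB-component of $\tau'$, and the argument splits according to how $\alpha$ acts on the two $\tau_B\tau_G$-orbits of $c$ (its two boundary circles): either $\alpha$ exchanges them, which is exactly the case where $c'$ lies in a height-$\tfrac12$ cylinder bounded by a \GB-path $c''$, or $\alpha$ preserves each of them, which is the case of an integer-height cylinder bounded by a \GB-cycle $c''$.

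First I would treat the path case. Here $c$ is $\alpha$-invariant, so the restrictions $c_G=\tau_G|_c$ and $c_B=\tau_B|_c$ are themselves normalised by $\alpha$; consequently $\alpha$ still normalises the sheared involutions $\tau_G\cdot c_Bc_G$ and $\tau_B\cdot c_Bc_G$, so $\alpha\in\Aut(\Shear_{c,B,G}(\tau))$ and the quotient is well defined. What then remains is bookkeeping: the intertwining relation shows that the descended restrictions are exactly $c''_G=\tau'_G|_{c''}$ and $c''_B=\tau'_B|_{c''}$, so applying $\pi$ to the defining expression of $\Shear_{c,B,G}$ produces $(\tau'_R,\ \tau'_G\cdot c''_Bc''_G,\ \tau'_B\cdot c''_Bc''_G)=\Shear_{c'',B,G}(\tau')$. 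The only delicate points are to confirm that $c''=\pi(c)$ is genuinely a path — its two endpoint half-edges arising from the fixed points of $\alpha$ on the core of the cylinder — and that the shear formula is insensitive to whether the underlying component is a path or a cycle.

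The real obstacle is the cycle case, where a single full shear of $c$ upstairs must be shown to descend to a single \emph{half} cylinder shear downstairs, an operation that surprisingly modifies $\tau'_R$ rather than $\tau'_G$ and $\tau'_B$. The geometric reason is that $c$ double-covers $c''$ through $\pi$ — the involution acts freely on $c$, rotating each boundary circle by a half-turn — so a single cylinder shear of $c$ (a one-unit shift of its top relative to its bottom) projects to a half-unit shift of $c''$, which is precisely what the combinatorial half-shear realises. To make this rigorous I would use the hypothesis that the component isolated by $c''$ consists only of hexagons, $1$-faces and red half-edges, equivalently that the $\alpha$-fixed singularity (of degree $4g-2$, or the exchanged pair of degree $2g$) lies inside the cap, and then track the red half-edges and $1$-faces of the cap through the quotient. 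Concretely I would rewrite the descended permutations $\pi(\tau_G\cdot c_Bc_G)$ and $\pi(\tau_B\cdot c_Bc_G)$ restricted to $c_1$ using the face-product relation for $\tau_R\tau_G\tau_B$ from \cref{lem:profileIsInvariant}, and verify that the net effect leaves $\tau'_G$ and $\tau'_B$ unchanged while replacing $\tau'_R|_{c_1}$ by $\tau'_G\tau'_B\tau'_R|_{c_1}$, matching the definition of the horizontal half cylinder shear. The simplicity of the cap is exactly what guarantees that no other faces interfere, so that this rewriting closes and the two cases together establish the lemma.
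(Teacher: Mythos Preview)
Your geometric intuition is correct and parallels the paper's (very brief) sketch: the hyperelliptic involution preserves each horizontal cylinder and acts on it as a $180^\circ$ rotation, so the parity of the height of the cylinder $C\supset c$ decides whether the quotient cylinder is capped by a \GB-path (odd height) or by a \GB-cycle isolating hexagons and red half-edges or $1$-faces (even height), and hence whether the descended move is a full or a half shear. But your proposed combinatorial case split is wrong, and this derails the second case.

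The two $\tau_B\tau_G$-orbits of an $\alpha$-invariant \GB-cycle $c$ are exactly $c\cap A^+$ and $c\cap A^-$; they are not ``boundary circles''. Since the hyperelliptic involution acts as $-1$ on the Abelian differential, it always swaps $A^+$ and $A^-$, hence always swaps these two orbits: your ``$\alpha$ preserves each orbit'' alternative is vacuous and cannot separate the two cases of the lemma. The correct dichotomy is the parity of the height of $C$. When it is odd there is an $\alpha$-invariant central \GB-cycle, and after replacing $c$ by it (legitimate by the remark at the end of \cref{sssec:weightedStableGraph}) your path-case descent goes through. When it is even, \emph{no} \GB-cycle of $C$ is $\alpha$-invariant, so $\alpha\notin\Aut(\Shear_{c,B,G}(\tau))$ and your equivariance argument does not even launch; you must instead identify the new hyperelliptic involution of the sheared surface, or argue geometrically as the paper does. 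Finally, even once the setup is repaired, your cycle-case conclusion cannot hold on the nose: the upstairs shear fixes $\tau_R$, so the quotient satisfies $\sigma'_R=\tau'_R$, and it is impossible that ``$\tau'_G,\tau'_B$ are unchanged while $\tau'_R$ is modified''. The lemma only asserts an isomorphism, and producing the explicit relabeling that converts the $(G,B)$-modification into the $R$-modification of the half shear is exactly the missing step.
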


\begin{proof}
  We only sketch the proof, which is straightforward.

  Each cylinder of $S(\tau)$ is preserved by the hyperelliptic involution. The
  first case corresponds to the situation where the height of the cylinder
  containing the curve $c$ has odd height. In that case, the circumference in
  the middle of the cylinder is mapped to a component of the critical graph that
  corresponds to a \GB-path in $S^*(\tau)$. Performing a cylinder shear in $c$
  is the same as performing a cylinder shear along that \GB-path.

  In the second case, when the height is even, the circumference is mapped to a
  component of the critical graph made of red edges.  In that case, the cylinder
  shear in $c$ becomes a horizontal half cylinder shear in the quotient.
\end{proof}

The lower bound of \cref{thm:connectednessHyperelliptic} is a direct consequence
of \cref{lem:shearQuotientinHyperelliptic}. Its proof is very similar to that of
\cref{prop:squareTiledLowerBound} for spherical profiles, by considering half
cylinder shear on top of cylinder shears.

\begin{proposition}\label{prop:hyperellipticLowerBound}
  Let $\mu \in \{[2^{\mu_2},(2g)^2],[2^{\mu_2},4g-2]\}$, with $g>1$. Then there
  are square-tiled surfaces in $\ST^{hyp}_{Ab}(\mu)$ that are separated by
  $\Omega(g)$ cylinder shears.
\end{proposition}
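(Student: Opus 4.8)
The plan is to mirror the proof of \cref{prop:squareTiledLowerBound}, but to track an invariant of the \emph{quotient} by the hyperelliptic involution rather than of the surface itself (which, being Abelian, has no half-edges). Fix $\tau \in \ST^{hyp}_{Ab}(\mu)$ and let $\tau'$ be its quotient, which by \cref{lem:hyperellipticAbelianQuotient} is a spherical quadratic square-tiled surface in one of the strata $\ST_{quad}([1^{\mu'_1}, 2^{\mu'_2}, 2g-1], k')$ (with the singularity $2g$ in the second case). Let $k'_R$ denote the number of red half-edges of $\tau'$, i.e.\ the number of fixed points of $\tau'_R$; this is a well-defined isomorphism invariant of $\tau$ since the hyperelliptic involution is unique in genus $g \ge 2$. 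The goal is to show that a single cylinder shear changes $k'_R$ by a bounded amount, and then to exhibit two surfaces in $\ST^{hyp}_{Ab}(\mu)$ whose quotients differ in $k'_R$ by $\Omega(g)$.

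First I would prove that every cylinder shear on $\tau$ changes $k'_R$ by at most $2$. By \cref{lem:shearQuotientinHyperelliptic}, together with its vertical analogue obtained by exchanging the colours $R$ and $B$, a horizontal (resp.\ vertical) cylinder shear on $\tau$ descends to either a cylinder shear or a half cylinder shear of the same orientation on $\tau'$. One then checks the four resulting cases. A horizontal cylinder shear on $\tau'$ only exchanges $G$ and $B$ and leaves $\tau'_R$ untouched, so $k'_R$ is unchanged. A vertical cylinder shear exchanges $R$ and $G$ inside an \RG-component; within an \RG-cycle no half-edges occur, and along an \RG-path only the two extremal half-edges can switch colour, so $k'_R$ moves by at most $2$. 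A horizontal half cylinder shear toggles, by its very definition, a pair of red half-edges into a pair of $1$-faces or conversely, changing $k'_R$ by exactly $2$; and a vertical half cylinder shear, by the $R \leftrightarrow B$ symmetry, toggles blue half-edges with $1$-faces and hence fixes $k'_R$. In every case $k'_R$ varies by at most $2$.

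It then remains to produce two surfaces realising an $\Omega(g)$ gap. Choose $\mu'_1 \in \{0,1\}$ of the parity of $\mu_2$, set $\mu'_2 = \lfloor \mu_2/2 \rfloor$ and $k' = 2g+1-\mu'_1 \ge 2g$ (resp.\ $k' = 2g+2-\mu'_1 \ge 2g+1$), so that $([1^{\mu'_1}, 2^{\mu'_2}, 2g-1], k')$ is an admissible spherical profile. By \cref{thm:pathLikeExistence} there is a path-like configuration $P$ with this profile; having a single \GB-path, $P$ has exactly $k'-2$ red half-edges (its $L$-leaves), hence at least $2g-2$ of them. Applying the colour involution $\sigma(\tau_R,\tau_G,\tau_B) = (\tau_B,\tau_G,\tau_R)$, which preserves $\mu'$ and the total number of half-edges while exchanging the red and blue counts, produces a surface $\sigma(P)$ in the same stratum whose only horizontal structure is a single \RG-path, and therefore with at most $2$ red half-edges. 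Both $P$ and $\sigma(P)$ are spherical and quadratic and lie in the relevant union of strata, so by the bijection of \cref{lem:hyperellipticAbelianQuotient} they lift to surfaces $\tau_1, \tau_2 \in \ST^{hyp}_{Ab}(\mu)$ whose quotients have values of $k'_R$ differing by at least $2g-4$. Combined with the bound from the previous paragraph, connecting $\tau_1$ to $\tau_2$ requires at least $(2g-4)/2 = \Omega(g)$ cylinder shears, as claimed.

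The main obstacle is the second step: one must verify carefully, from the combinatorial definition of the half cylinder shears and the vertical analogue of \cref{lem:shearQuotientinHyperelliptic}, that each of the four induced operations on the quotient moves $k'_R$ by at most $2$. Once this invariant is controlled, the construction of the two extremal surfaces and the concluding count proceed exactly as in the spherical case of \cref{prop:squareTiledLowerBound}.
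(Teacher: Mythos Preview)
Your proof is correct and follows essentially the same route as the paper: track the number of red half-edges in the hyperelliptic quotient, observe via \cref{lem:shearQuotientinHyperelliptic} that each cylinder shear on $\tau$ descends to a cylinder shear or half cylinder shear on $\tau'$ and hence moves this count by at most $2$, and compare a path-like quotient to its $R\leftrightarrow B$ mirror to get an $\Omega(g)$ gap. The paper's own argument is terser (it simply asserts the ``at most $2$'' bound and appeals to symmetry for the second surface), while you spell out the four-case analysis and the explicit colour swap, but the content is the same.
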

\begin{proof}
  Let $\mu_1'$ and $\mu_2'$ such that $\mu_1' + 2\mu_2' = \mu_2$ and
  $\mu_1'\le 1$. Let $k' = 2g+1 - \mu_1'$ if $\mu$ is of the form
  $[2^{\mu_2},4g-2]$ and $k' = 2g+2 - \mu_1'$ otherwise. Note that $(\mu',k')$
  is a spherical profile and that $k' \ge 2g$. By \cref{thm:pathLikeExistence},
  there is a path-like square-tiled surface $\tau$ of profile $(\mu',k')$, and
  by symmetry, there is also a square tiled-surface $\sigma$ of profile
  $(\mu',k')$ that has a single vertical cylinder which furthermore is a path.

  A shear can only change by at most two the number of half-edges coloured red in
  $S^*(\tau)$. Likewise, a half cylinder shear changes the number of red
  half-edges by at most two. Since $\tau$ and $\sigma$ have respectively $k'-2$
  and at most two red half-edges. By \cref{lem:hyperellipticAbelianQuotient} and
  \cref{lem:shearQuotientinHyperelliptic}, the corresponding square-tiled
  surfaces of $\ST^{hyp}_{Ab}(\mu)$ are separated by at least $g-2$ shears,
  which concludes the proof.
\end{proof}
\subsection{Connecting to a path-like configuration}\label{ssec:hyperellipticPathLike}
We continue to work on spherical square-tiled surface.  We now describe how to
circumvent the restriction $\mu_1 \le 1$ on the profile when connecting to a
path-like configuration as in \cref{sec:connectingtoPathLike} or when connecting
two path-like configurations as in \cref{ssec:reconfigurationPathLike}.  The
procedure uses the half cylinder shears introduced just before.

The analogue of \cref{lem:movesToPathLike} we prove in that context is the
following.

\begin{lemma} \label{lem:hyperellipticToPathLike} Let $\tau$ be a spherical
  square-tiled surface with profile $([1^{\mu_1}, 2^{\mu_2}, d], k)$ where
  $d \ge 3$.  Then $\tau$ can be connected with a sequence of $O(k + \mu_1)$
  powers of cylinder shears and $O(\mu_1)$ half cylinder shears to a path-like
  square-tiled surface.
\end{lemma}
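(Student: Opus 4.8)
The plan is to eliminate the degree-three vertices of $\tau$ — the $1$-faces of $S^*(\tau)$ — in pairs, each pair being turned into two red half-edges by a half cylinder shear, until $\mu_1 \le 1$, and then to invoke \cref{lem:movesToPathLike}. The elementary move is this: if a \GB-cycle $c$ isolates a cap that contains, apart from hexagons, exactly two $1$-faces and no other singularity, then the horizontal half cylinder shear along $c$ replaces these two $1$-faces by two red half-edges. It therefore sends a profile $([1^{\mu_1}, 2^{\mu_2}, d], k)$ to $([1^{\mu_1-2}, 2^{\mu_2}, d], k+2)$, leaving $\mu_2$, the $d$-face and the spherical relation $\sum_i \mu_i(i-2)=k-4$ unchanged; this is consistent with \cref{lem:shearQuotientinHyperelliptic}, according to which a half cylinder shear is the hyperelliptic quotient of a genuine cylinder shear and hence stays inside a stratum. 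Since the move changes $\mu_1$ by $-2$, iterating it brings $\mu_1$ to $0$ or $1$.

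To feed this move I use two operations on the weighted stable tree $\Gamma(\tau)$, read through \cref{lem:fusionPath,lem:fusionPathDualGraph}. First, if a single vertex (singular layer) carries at least two $1$-faces, a single vertical shear — the inverse of a fusion — splits off a fresh cap bounded by a new \GB-cycle and containing precisely two of them together with only hexagons, producing a leaf with decoration $([1^2],0)$; up to one power of a horizontal shear aligning the cap-side orbit so that it is stable under $\tau_R$, the move above then converts it. Crucially this splitting requires no hypothesis on the leaves, so it applies in particular to the layer of the $d$-face. Second, if no vertex carries two $1$-faces but $\mu_1 \ge 2$, then the $1$-faces are spread out, at most one per vertex; one checks that every leaf then has a half-edge (a non-$d$-face leaf with a $1$-face is $([1],1)$, one without is $([],2)$, and the $d$-face leaf, if the $d$-face is a leaf, has $k^{(v)} = d - \mu_1^{(v)} \ge 2$), so \cref{lem:fusionPathExistence,lem:fusionPathDualGraph} let me fuse along a path of $\Gamma(\tau)$ that gathers two $1$-faces into a common layer, to which the first operation then applies.

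Alternating these two operations drives $\mu_1$ down to at most $1$ after removing $O(\mu_1)$ pairs. This costs $O(\mu_1)$ half cylinder shears and, for the splittings and fusions, $O(k+\mu_1)$ powers of cylinder shears: the fusion and splitting paths run along distinct edges of the successive stable trees and so contribute in total at most $O(|E(\Gamma)|) = O(k+\mu_1)$. The surface now obtained is spherical with profile $([1^{\mu_1'}, 2^{\mu_2}, d], k')$ where $\mu_1' \le 1$ and $k' = k + (\mu_1 - \mu_1') = O(k+\mu_1)$, so \cref{lem:movesToPathLike} applies and connects it to a path-like configuration with a further $O(k') = O(k+\mu_1)$ powers of cylinder shears. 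Adding up yields the announced bounds.

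The step I expect to be genuinely delicate is the splitting operation above: one must show that a vertical shear can peel off a cap containing exactly two prescribed $1$-faces and only hexagons, rather than also dragging in the $d$-face. This is where the assumption that $S^*(\tau)$ has a single face of degree greater than two (so that, inside any layer, the $1$-faces are the only singularities competing with the lone $d$-face) is decisive, and where the precise combinatorics of \cref{lem:fusionPath} must be checked against the planar embedding.
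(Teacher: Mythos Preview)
Your proposal has a genuine gap precisely where you flag it. You want, whenever a vertex of the stable tree carries two $1$-faces, to peel off with a single vertical shear a cap bounded by a new \GB-cycle containing exactly those two $1$-faces and only hexagons. Calling this ``the inverse of a fusion'' does not produce it: a fusion merges cylinders along a specific fusion path, and there is no reason the layer you are looking at arose that way, nor that a \RG-component exists whose shear isolates the two chosen $1$-faces while leaving the $d$-face on the other side. Nothing in \cref{lem:fusionPath,lem:fusionPathDualGraph} supplies such a component, and exhibiting one by hand is exactly the planar-combinatorics argument you would still have to write.

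The paper sidesteps this entirely by first observing a structural fact you do not use: because the reduced profile has a single part $d\ge 3$, the vertex genus formula of \cref{sssec:weightedStableGraph} forces every vertex of $\Gamma(\tau)$ other than the one carrying $d$ to be a leaf, so $\Gamma(\tau)$ is a \emph{star}, and each leaf has decoration $([\,],2)$, $([1],1)$ or $([1^2],0)$. The $([1^2],0)$ leaves are already caps of exactly the shape you were trying to manufacture, so the horizontal half cylinder shear applies to them directly---no splitting is needed. Once those leaves are converted, every leaf carries a half-edge, \cref{lem:fusionPathExistence} provides fusion paths, and successive fusions shrink the degree of the centre. A short endgame when the centre has degree one (one more half shear if $k^{(v_0)}>0$; otherwise a vertical half cylinder shear along an explicit \RG-cycle joining two remaining $1$-faces, as in \cref{fig:fusionCycle}) reaches a path-like configuration. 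In short, the observation ``$\Gamma(\tau)$ is a star'' is what replaces your missing splitting step and closes the argument.
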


\begin{proof}
  Let $\tau$ be a spherical square-tiled surface with profile
  $([1^{\mu_1}, 2^{\mu_2}, d], k)$. The weighted stable tree $\Gamma(\tau)$ is a
  star whose center is the unique vertex $v_0$ whose decoration $\mu^{v_0}$
  contains the element $d$ of the profile. The other vertices, that necessarily
  are leaves, have decorations either equal to $([2^m], 2)$, or $([1, 2^m], 1)$
  or $([1^2,2^m], 0)$ where $m \ge 0$. For each leaf with decoration
  $([1^2,2^m], 0)$, we use a half cylinder shear to turn it in to a leaf with
  decoration $([2^{m+1}], 2)$.  We obtain a square-tiled surface satisfying the
  assumptions of \cref{lem:fusionPathExistence} on which there hence exists a
  fusion path. Performing a vertical cylinder shear on that fusion path reduces
  the degree of the special vertex $v_0$ in $\Gamma(\tau)$.

  Now it remains to treat the case when the degree of $v_0$ is one. In that
  case, $\Gamma(\tau)$ has a single other vertex $v_1$ with decoration either
  $([2^m], 2)$, or $([1,2^m], 1)$ or $([1^2,2^m], 0)$ with $m \ge 0$. If $v_0$
  is such that its decoration satisfies $k^{(v_0)} > 0$, then up to performing a
  horizontal half cylinder shear in $v_1$, one can also use
  \cref{lem:fusionPathExistence} to find a fusion path. We now further assume
  that $k{(v_0)} = 0$. Up to a horizontal half cylinder shear, one can assume
  that $\mu_1^{(v_1)} > 0$. It is easy to see that there exists a \RG-cycle $c$
  that ``connects'' a 1-face in $v_0$ to a 1-face in $v_1$ in the following
  sense: $c$ crosses every \GB-components exactly twice and one of the connected
  component of the complement of $c$ consists only of hexagons and two
  1-faces. Performing a vertical half cylinder shear in $c$ results in a path
  like configuration (see \cref{fig:fusionCycle}).
  \begin{figure}[!ht]
    \centering
    \includegraphics{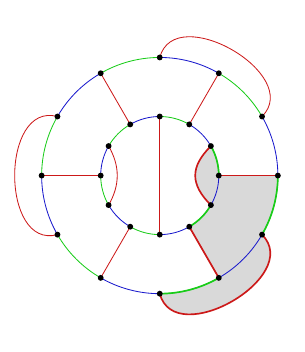}
    \hspace{1cm}
    \includegraphics{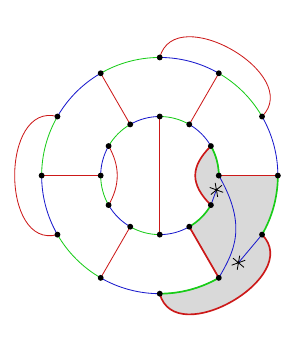}
    \caption{A \RG-cycle in $S^*(\tau)$ as used in the proof of
      \cref{lem:hyperellipticToPathLike}. The shaded region is made of two
      1-faces and one 2-face (or hexagon).}
    \label{fig:fusionCycle}
  \end{figure}
\end{proof}

\subsection{Reducing to the spherical case: proof of
  \cref{thm:connectednessHyperelliptic}}\label{ssec:connectednessHyperelliptic}

We focus on the upper bound of \cref{thm:connectednessHyperelliptic}, as the
lower bound was proven by \cref{prop:hyperellipticLowerBound}.

\begin{proof}[Proof of \cref{thm:connectednessHyperelliptic}]
  Let us consider a square-tiled surface in either
  $\ST_{Ab}^{hyp}([2^{\mu_2},\allowbreak (2g)^2])$ or
  $\ST_{Ab}^{hyp}([2^{\mu_2}, 4g-2])$ for some $\mu_2 \ge 0$.  By
  \cref{lem:hyperellipticAbelianQuotient}, its quotient by the hyperelliptic
  involution belongs to respectively either a stratum
  $\ST_{quad}([1^{\mu'_1}, 2^{\mu'_2}, 2g-1], k')$ or
  $\ST_{quad}([1^{\mu'_1}, 2^{\mu'_2}, 2g], k')$ for some
  $\mu'_1, \mu'_2, k' \ge 0$.  By \cref{lem:shearQuotientinHyperelliptic}, the
  cylinder shears in $\tau$ correspond to cylinder shears or half cylinder
  shears in the quotients. Hence, it is enough to connect
  $\displaystyle \bigcup_{\substack{k' + \mu'_1 = 2g+1 \\ \mu'_1 + 2 \mu'_2 =
      \mu_2}} \ST_{quad}([1^{\mu'_1}, 2^{\mu'_2},\allowbreak 2g-1], k')$ or
  $\displaystyle \bigcup_{\substack{k' + \mu'_1 = 2g + 2\\ \mu'_1 + 2 \mu'_2 =
      \mu_2}} \ST_{quad}([1^{\mu'_1}, 2^{\mu'_2}, 2g], k')$ using both cylinder
  shears and half cylinder shears.

  By \cref{thm:connectednessSphere} we know that cylinder shears are enough to
  connect the spherical strata when $\mu'_1\le1$. Let us first note that the
  parity of $\mu'_1$ is fixed by the profile $(\mu,k)$ by
  \cref{lem:parityCondition}.  Hence, depending on $(\mu,k)$ only one of the
  quotient stratum with $\mu'_1 \le 1$ is non-empty. What remains to do is to
  connect all spherical square-tiled surface to these base cases.

  To do so, we first apply \cref{lem:hyperellipticToPathLike} that allows us to
  reach a path-like configuration $S^*(\tau)$.

  Next, we explain how to remove pairs of 1-faces in $S^*(\tau)$ until there
  remains at most one of them (the corresponding sequence is illustrated by an
  example in \cref{fig:triangleEdition}). Let $A$ and $B$ be two 1-faces of
  $S^*(\tau)$ (see \cref{fig:triangleEditionA}). Perform in $\tau$ a power of
  the horizontal shear on the unique \GB-path until $A$ and $B$ share a blue
  edge $e$ (see \cref{fig:triangleEditionB}). Let $f$ be the red edge in the
  boundary of $A$. Perform a vertical half cylinder shear on the \RG-cycle
  bounding $A \cup B$, this replaces $e$ by two blue half-edges and thus cuts
  the \GB-path in two \GB-paths $P_1$ and $P_2$. This also changes the profile
  by trading two $1$-faces for two half-edges and a hexagon (see
  \cref{fig:triangleEditionC}). Perform a horizontal cylinder shear on $P_1$ and
  $P_2$ such that each of them has a extremity incident to $f$ and a green
  half-edge (see \cref{fig:triangleEditionD}). Perform a vertical cylinder shear
  on $f$ and the incident green half-edges to obtain again a single \GB-path.
  \begin{figure}[h!]
    \begin{subfigure}[t]{.48\linewidth}
      \centering
      \includegraphics{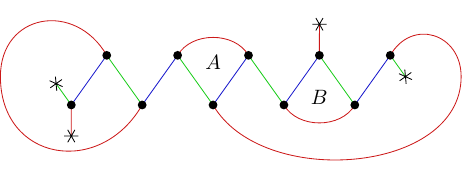}
      \caption{Starting configuration.}
      \label{fig:triangleEditionA}
    \end{subfigure}
    \hfill
    \begin{subfigure}[t]{.48\linewidth}
      \centering
      \includegraphics{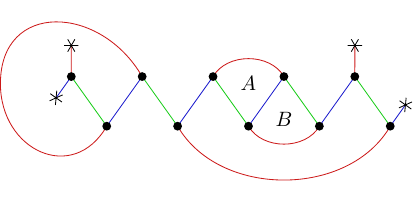}
      \caption{After a power of a horizontal shear the two $1$-faces $A$ and $B$
        are adjacent and separated by a blue edge $e$.}
      \label{fig:triangleEditionB}
    \end{subfigure}

    \begin{subfigure}[t]{.48\linewidth}
      \centering
      \includegraphics{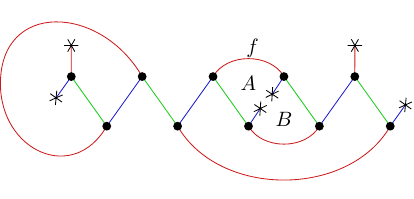}
      \caption{After a vertical half cylinder shear, $e$ is replaced by two blue
        half-edges. This impacts the profile.}
      \label{fig:triangleEditionC}
    \end{subfigure}
    \hfill
    \begin{subfigure}[t]{.48\linewidth}
      \centering
      \includegraphics{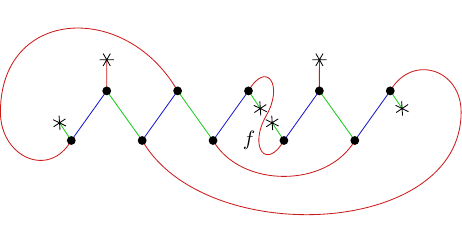}
      \caption{After two horizontal shears, a red edge $f$ originally on the
        boundary of $A$ is now adjacent to the extremities of both \GB-paths.}
      \label{fig:triangleEditionD}
    \end{subfigure}

    \begin{subfigure}[t]{.48\linewidth}
      \centering
      \includegraphics{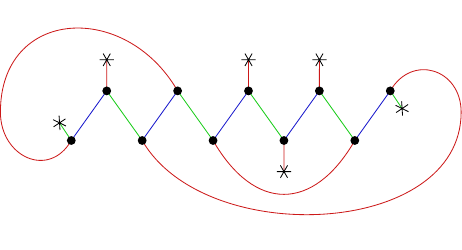}
      \caption{Final configuration, obtained by performing the
        vertical shear containing $f$.}
      \label{fig:triangleEditionE}
    \end{subfigure}
    \caption{Removing pairs of 1-faces in the path-like configuration.}
    \label{fig:triangleEdition}
  \end{figure}

  The resulting square-tiled surface is path-like and has two fewer 1-faces. By
  repeating this procedure inductively, one obtains a spherical path-like
  square-tiled surface $\tau'$ with at most one 1-face. In other words, the
  profile $(\mu',k')$ of $\tau'$ is spherical and has $\mu_1' \le 1$.  We
  reduced the general case to the two known base cases of
  \cref{thm:connectednessSphere} in $O(k + \mu_1)$ powers of cylinder shears and
  half cylinder shears. This concludes the proof of the theorem.
\end{proof}

\section{Discussion}\label{sec:discussion}
\subsection{Equivalence via cylinder shears}
The first open problem we consider is naturally
\cref{conj:squareTiledSurfacesConnectedComponents} in full generality. In this
article, we confirmed \cref{conj:squareTiledSurfacesConnectedComponents} in two
special cases. First, in the Abelian hyperelliptic components (see
\cref{thm:connectednessHyperelliptic}). Second, on the sphere (see
\cref{thm:connectednessSphere}), in all strata when authorizing half-shears, but
only in the strata such that $\mu_1 \le 1$ and $(\mu,k) \neq ([1,2^*,3], 4)$
when restricting to regular shears. This suggest several milestones towards
proving \cref{conj:squareTiledSurfacesConnectedComponents}, that isolate
different points of failure of our proof. We present them ranked by increasing
(supposed) difficulty:

The first question is whether the quadratic hyperelliptic components are also
connected by cylinder shears. Our method to connect to a path-like configuration
is specific to $\mu_1 \le 1$ when using only cylinder shears, and to the strata
with $\sum_{i \ge 3} \mu_i \le 1$ when authorizing half-shears, which does not
cover quadratic hyperelliptic components.

Finally, we conjecture the following, which is a special case of
\cref{conj:squareTiledSurfacesConnectedComponents} and generalises
\cref{thm:connectednessSphere} to spherical profiles with $\mu_1 \ge 2$:
\begin{conjecture}\label{conj:connectednessShpere}
  Let $(\mu,k)$ be a spherical profile. The set of square-tiled surfaces
  $\ST(\mu,k)$ is connected by cylinder shears.
\end{conjecture}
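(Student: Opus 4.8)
The plan is to follow the same two-phase architecture as the proof of \cref{thm:connectednessSphere}: first reduce an arbitrary spherical square-tiled surface to a canonical configuration by cylinder shears, then show that all canonical configurations in a fixed stratum $\ST(\mu,k)$ are equivalent. When $k \ge 2$, \cref{thm:pathLikeExistence} guarantees that path-like configurations exist regardless of $\mu_1$, so in that regime I would keep path-like configurations as the canonical target. The two places where $\mu_1 \le 1$ is actually used are (i) \cref{lem:movesToPathLike}, via the requirement in \cref{lem:fusionPathExistence} that every leaf $v$ of the weighted stable tree satisfy $k^{(v)} \ge 1$, and (ii) the whole of \cref{ssec:reconfigurationPathLike}, where the notion of a \emph{nice} decorated plane tree and the caterpillar normal form presuppose at most one $\overline L$-leaf. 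Both must be generalized, and crucially the half cylinder shears of \cref{sec:hyperellipticSquareTiledSurfaces} are off limits here, since they change the profile and leave the stratum.

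For the reduction to path-like configurations when $\mu_1 \ge 2$, the obstruction is exactly a terminal layer of $\Gamma(\tau)$ with decoration $([1^2,2^m],0)$: two $1$-faces, some hexagons, and no half-edge from which to start a fusion path. I would introduce a \emph{fusion cycle}: an \RG-cycle $c$ meeting such a terminal layer and joining its two $1$-faces, chosen so that $\Shear_{c,R,G}(\tau)$ absorbs the terminal \GB-cycle into its parent layer. By \cref{lem:profileIsInvariant} any such shear automatically preserves the profile, so nothing is traded away; the key lemma to prove is the \emph{cycle} analogue of \cref{lem:fusionPath}, namely that shearing along $c$ decreases $|E(\Gamma(\tau))|$ by one. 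Iterating fusion cycles to swallow all half-edge-free terminal layers, and fusion paths as in \cref{lem:fusionPathExistence} wherever half-edges are present, should drive any surface with $k \ge 2$ to a path-like configuration.

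For the reconfiguration of path-like configurations I would again pass to decorated plane trees through \cref{thm:dualTree}, which imposes no restriction on $\mu_1$, so that two path-like surfaces are equivalent precisely when their trees are connected under glue and cut and decoration exchange. The task then becomes the purely combinatorial statement that any two decorated plane trees of profile $(\mu,k-2)$ — now with arbitrarily many $\overline L$-leaves — are connected by these two moves. I would replace the nice/caterpillar normal form by a canonical caterpillar whose spine carries the high-degree vertices in non-increasing order and at which \emph{all} $\overline L$-leaves are gathered, in a fixed cyclic position, at one designated spine vertex. Here \cref{lem:treeToCaterpillar} generalizes directly, and the spine-sorting \cref{cl:spineSwitch} should extend once one tracks the positions of several $\overline L$-leaves rather than a single one; decoration exchange is exactly the move that lets the $\overline L$-leaves be collected, so the additional bookkeeping is the main new combinatorial content.

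The genuinely hard case, and the one I expect to be the principal obstacle, is $k < 2$, in particular $k = 0$, such as the quadratic stratum $\ST([1^4],0)$. Here \emph{no} path-like configuration exists, because the single \GB-path of a path-like surface must terminate in two half-edges; with $k=0$ every \GB- and \RG-component is a cycle, and the entire decorated-plane-tree scaffolding — built on the half-edges at the ends of the \GB-path — becomes unavailable. I would instead adopt as canonical form a one-cylinder surface, i.e.\ a single \GB-cycle separating the sphere into two caps, reach it via fusion cycles, and then try to connect all such one-cylinder configurations directly. Establishing both the existence of this canonical form for every spherical profile and a complete set of moves connecting one-cylinder configurations is where the real difficulty lies, since it demands a substitute for the decorated-tree technology in the half-edge-free setting.
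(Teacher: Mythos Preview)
This statement is \cref{conj:connectednessShpere}, an \emph{open conjecture} in the paper; the paper does not prove it and explicitly presents it in the Discussion as a generalisation of \cref{thm:connectednessSphere} that lies beyond the reach of the methods developed there. So there is no proof in the paper to compare against.

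Your proposal is accordingly not a proof but a research plan, and you are candid about this: you identify the two places where the hypothesis $\mu_1\le 1$ is genuinely used (the leaf condition $k^{(v)}\ge 1$ in \cref{lem:fusionPathExistence}, and the definition of \emph{nice} in \cref{ssec:reconfigurationPathLike}), you correctly rule out half cylinder shears as a remedy since they change the profile, and you isolate $k<2$ as the case where the entire path-like scaffolding collapses. This diagnosis matches the paper's own assessment in \cref{sec:discussion}. However, several of the proposed steps are themselves open problems rather than lemmas you could expect to prove by routine adaptation. In particular: your ``fusion cycle'' must be an \RG-\emph{component} for $\Shear_{c,R,G}$ to be defined, yet an \RG-component entering a terminal layer with decoration $([1^2,2^m],0)$ need not be a cycle at all, nor meet the bounding \GB-cycle in the controlled way a fusion path does, so the analogue of \cref{lem:fusionPath} does not obviously hold; the extension of \cref{cl:spineSwitch} to track several $\overline L$-leaves simultaneously is exactly the combinatorial obstacle the paper could not surmount; and for $k=0$ you have no candidate replacement for \cref{thm:dualTree}, without which there is no mechanism to compare one-cylinder configurations. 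These are not gaps in an otherwise complete argument but the substance of the conjecture itself.
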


\subsection{Further remarks}
\label{ssec:furtherRemarks}
There is an action of $\SL_2(\bZ)$ on square-tiled surfaces that is a natural
restriction of an $\SL_2(\bR)$-action on Abelian and quadratic differentials.
This action can easily be rephrased in terms of cylinder shears: the action of
$\begin{pmatrix}1&1\\0&1\end{pmatrix}$ and
$\begin{pmatrix}1&0\\1&1\end{pmatrix}$ corresponds to simultaneous cylinder
shears in respectively all horizontal and vertical cylinders. Since these two
matrices generate $\SL_2(\bZ)$, connected components of the $\SL_2(\bZ)$
reconfiguration graph are finer than for cylinder shears.  The classification of
these connected components, even conjecturally, is open in general. Beyond the
elementary case of the stratum $\cH(\emptyset)$ on the torus, only partial
results are known in genus 2 and in the so-called Prym loci,
see~\cite{McMullen2005, HubertLelievre2006, LanneauNguyen2020, Duryev}.

\bigskip

Beyond its interest as a delicate reconfiguration problem, it would be
interesting to investigate more the geometric significance of cylinder shears on
square-tiled surfaces. It is well-known that the cardinality of $\ST(\mu, k)$ is
related to the so-called Masur--Veech measure of strata of Abelian and quadratic
differentials~\cite{Zorich2002}. Though the link between the geometry (e.g. the
diameter) of the cylinder shear reconfiguration graph and connected components
of strata remains to explore. In particular, it seems plausible that the mixing
rate of the cylinder shear dynamics is related to the spectral gap of the
$\SL_2(\bR)$-action on the connected components of strata of Abelian and
quadratic strata established in~\cite{AvilaGouezelYoccoz2006, AvilaGouezel2013}.

\bigskip

Finally, cylinders shears on square-tiled surfaces present surprising
similarities with a reconfiguration operation on the colourings of a graph. Two
reconfiguration operations on colourings have been considered: single vertex
recolouring that consists in changing the colour of a single vertex and Kempe
changes that are more complex but are crucial in the proof of the celebrated
Four Colour Theorem. Like Kempe changes, cylinder shears can affect an
arbitrarily large part of the configuration. This non-locality complicates the
analysis of the reconfiguration graph, in particular, most of the arguments to
prove rapid mixing of the associated Markov chains do not seem to apply to such
chains. As we observed at the end of \cref{ssec:shearDef}, there are deeper
connections between cylinder shears and Kempe changes on edge colourings, which
suggest that developments of methods to analyse the mixing of one of these
chains may transfer to the other.

\bibliographystyle{alpha}
\bibliography{flips}

\end{document}